\newtheorem{thm}{Theorem}
\newtheorem{lem}{Lemma}
\newtheorem{prop}{Proposition}
\newtheorem{rem}{Remark}
\renewcommand{\thefootnote}{\dag}
\newcommand{\mb}{\mathversion{bold}}
\newcommand{\mn}{\mathversion{normal}}
\newcommand{\eps}{\varepsilon}
\renewcommand{\le}{\ell_\epsilon}
\newcommand{\lea}{\ell_{\epsilon,a}}
\newcommand{\re}{\rho_\epsilon}
\newcommand{\R}{\mathbb{R}}
\newcommand{\C}{\mathbb{C}}
\newcommand{\N}{\mathbb{N}}
\newcommand{\Z}{\mathbb{Z}}
\renewcommand{\H}{\mathbb{H}}
\newcommand{\E}{{\mathcal E}_{\varepsilon}}
\renewcommand{\P}{\mathcal{P}}
\newcommand{\Ew}{{\mathcal E}_\eps^{\rm w}}
\newcommand{\logeps}{|\!\log\eps|}
\newcommand{\GP}{{\rm $(\text{GP})_\eps^{c}$ }}
\newcommand{\gp}{(\text{GP})_\eps^{c}}
\newcommand{\LF}{{\rm (LF)}}
\newcommand{\lf}{(\text{LF})}
\def\rest{\hskip 1pt{\hbox to 10.8pt{\hfill
\vrule height 7pt width 0.4pt depth 0pt\hbox{\vrule height 0.4pt
width 7.6pt depth 0pt}\hfill}}}
\def\evalu{\hskip 1pt{\hbox to 2pt{\hfill \vrule height -6pt width 0.4pt depth
0pt}}}
\def\barint{\mathop{\vrule width 6pt height 3 pt depth -2.5pt \kern -8.8pt
\intop}}
\title{Leapfrogging vortex rings for the  three dimensional Gross-Pitaevskii
equation}
\author{{\sc Robert  L. Jerrard}  \& {\sc Didier Smets} }
\date{}
\begin{document}

\maketitle
\begin{abstract}
Leapfrogging motion of vortex rings sharing the same axis of symmetry was first
predicted by Helmholtz in his famous work on the Euler equation for incompressible fluids.
Its justification in that framework remains an open question to date. In this 
paper, we rigorously derive the 
corresponding leapfrogging motion for the axially symmetric 
three-dimensional Gross-Pitaevskii equation.   
\end{abstract}
\section{Introduction}                              %

The goal of this paper is to describe a class of cylindrically symmetric 
solutions to the three-dimensional 
Gross-Pitaevskii equation
\begin{equation*}\label{eq:GP}
i\partial_t u - \Delta u = \frac{1}{\eps^2}u(1-|u|^2)
\end{equation*}
for a complex-valued function $u:\:\R^3\times \R \to \C$. In the regime 
which we shall describe, it turns out that the Gross-Pitaevskii 
equation bears some 
resemblance with the Euler equation for 
flows 
of 
incompressible fluids
$$
\left\{ 
\begin{array}{l}
\partial_t v + (v\cdot \nabla) v = -\nabla p\\
{\rm div}\, v = 0,
\end{array}
\right.
$$
where $v:\:\R^3\times \R \to \R^3$ is the velocity field and $p:\: 
\R^3\times \R \to \R$ is the pressure field.  In this analogy,  
the role of the velocity $v$ is played by the current\footnote{For $y\in \C$ 
and $z=(z_1,\cdots,z_k) \in \C^k$ we write $(y,z):=\big({\rm 
Re}(y\bar z_1),\cdots,{\rm Re}(y\bar z_k)\big) \in \R^k$ and $y\times z := 
(iy,z)$.}
$$
j(u) := u \times \nabla u = (iu,\nabla u) = {\rm Re} (u \nabla \bar{u} )
$$
and the vorticity field $\omega:={\rm curl}\, v$ therefore corresponds, up to a 
factor of two, to the Jacobian 
$$
J(u) := \frac{1}{2}{\rm curl}\, j(u) = \Big( \partial_2 u \times \partial_3 u, 
\partial_3 u \times \partial_2 u, \partial_1 u \times \partial_2u\Big). 
$$

In his celebrated work \cite{Hel1,Hel2} on the Euler equation, Helmholtz 
considered
with great attention the situation where the vorticity field $\omega$ is
concentrated in a ``circular vortex-filament of very small section'', a 
thin vortex ring.  
A central question in Helmholtz's work, as far as dynamics is concerned, is 
related
to the possible forms of stability of the family of such vortex 
rings, allowing a change in time of cross-section, radius, position  
or even possibly of inner profile, and a description of these evolutions.  
When only one vortex-filament is present, Helmholtz's conclusions are :  
\begin{quote}\it{
 Hence in a circular vortex-filament of very small section in an indefinitely
extended fluid, the center of gravity of the section has, from the commencement, 
an
approximately constant and very great velocity parallel to the axis of the
vortex-ring, and this is directed towards the side to which the fluid flows 
through
the ring.}
\end{quote}
Instead, when two vortex-filaments interact,  Helmholtz predicts the
following :
\begin{quote}{\it
We can now see generally how two ring-formed vortex-filaments
having the same axis would mutually affect each other, since each, in addition 
to its
proper motion, has that of its elements of fluid as produced by the other. If 
they
have the same direction of rotation, they travel in the same direction; the 
foremost
widens and travels more slowly, the pursuer shrinks and travels faster till 
finally,
if their velocities are not too different, it overtakes the first and penetrates 
it.
Then the same game goes on in the opposite order, so that the rings pass through 
each
other alternately.}
\end{quote}

The motion described by Helmholtz, and illustrated in Figure \ref{fig:one} 
 below, 
is often termed {\it leapfrogging} in the fluid mechanics community. Even 
though it has been widely studied since Helmholtz, as far as we know it has 
not been mathematically justified in the context of the Euler equation, even in 
the axi-symmetric case without swirl\footnote{We refer to \cite{CaMa,MaNe} for 
some attempts in that direction, and an account of the difficulties.}.  As a 
matter of fact, the interaction 
leading to the leapfrogging motion is somehow borderline in strength compared 
to the stability of isolated vortex rings.

\begin{figure}[ht!]\label{fig:one}
\centering
\includegraphics[width=0.95\textwidth]{./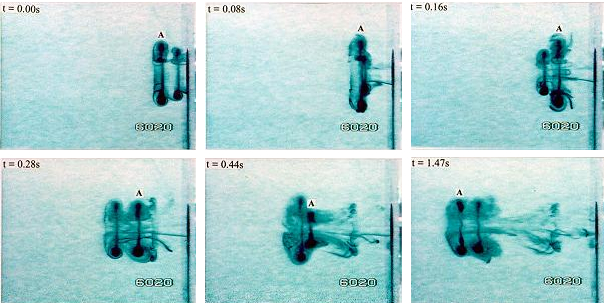}
\caption{\copyright T.T. Lim, Phys. of Fluids, Vol. 9}
\label{overflow}
\end{figure}

Our main results in this paper, Theorem \ref{thm:main_asympt} and 
\ref{thm:main} below, provide a 
mathematical justification to the leapfrogging motion of two or more vortex 
rings in
the context of the axi-symmetric three-dimensional Gross-Pitaevskii equation. 

\medskip
\subsection{Reference vortex rings}

A well-known particularity of the Gross-Pitaevskii equation is that vortex ring  
intensities are necessarily quantized. For stability reasons, we only consider 
simply quantized rings. 

\medskip

Let $\mathcal C$ be 
a smooth oriented closed curve in $\R^3$ and let $\vec{\mathcal J}$ be the 
vector
distribution corresponding to $2\pi$ times the circulation along $\mathcal C$, 
namely 
$$
\langle \vec{\mathcal{J}},\vec X\rangle = 2\pi \int_{\mathcal C} \vec X \cdot 
\vec
\tau\qquad \forall \vec X \in \mathcal{D}(\R^3,\R^3),
$$
where $\vec \tau$ is the tangent vector to $\mathcal C.$ 
To the ``current density'' $\vec{\mathcal J}$ is associated the ``induction'' 
$\vec
B$, which satisfies the equations
$$
{\rm div }(\vec B)=0,\qquad {\rm curl} (\vec B) = \vec{\mathcal J} \qquad 
\text{in }
\R^3,
$$
and is obtained from $\vec {\mathcal J}$ by the Biot-Savart law. To $\vec B$ is 
then associated a vector potential
$\vec A$, which satisfies 
$$
{\rm div }(\vec A)=0,\qquad {\rm curl }(\vec A) = \vec B \qquad \text{in } \R^3,
$$
so that 
$$
-\Delta \vec A = {\rm curl}\, {\rm curl} (\vec A) = \vec{\mathcal J} \qquad 
\text{in
} \R^3.
$$
Since we only consider axi-symmetric configurations in this paper, we let 
 $\H$ to be the half-space $\{(r,z)\ | \ r>0 , z \in \R\}$ and we denote by  
$r(\cdot)$ and $z(\cdot)$ the coordinate 
functions in $\H.$
For $a\in \H$, let $\mathcal C_a$ be the circle of
radius $r(a)$ parallel to the 
$xy$-plane in $\R^3$, centered at the point $(0,0,z(a))$, and oriented so that 
its binormal vector points towards the positive $z$-axis. By cylindrical 
symmetry, we may write the corresponding vector potential as 
$$
\vec A_a \equiv A_a(r,z) \vec{e}_\theta. 
$$
The expression of the vector Laplacian in cylindrical coordinates yields the
equation for the scalar function $A_a$ :
$$
\left\{
\begin{array}{ll}
\displaystyle -\left(\partial^2_r +\frac{1}{r}\partial_r - \frac{1}{r^2} +
\partial^2_z\right) A_a =  2\pi \delta_{a} &\qquad \text{in } \H\\
A_a   = 0 & \qquad \text{on } \partial\H,
\end{array}
\right.
$$  
or equivalently
$$
\left\{
\begin{array}{ll}
\displaystyle -{\rm div} \left(\frac{1}{r} \nabla \left( r A_a\right)
\right) =  2\pi \delta_{a} &\qquad \text{in } \H\\
A_a  = 0 & \qquad \text{on } \partial\H,
\end{array}
\right.
$$  
which can be integrated explicitly in terms of complete elliptic 
integrals\footnote{The integration is actually simpler in the 
original cartesian
coordinates. A classical reference is the book of Jackson \cite{Jackson}, 
an extended analysis can be found in the 1893 paper of Dyson \cite{Dyson}. 
See Appendix A for some details.}. 

Up to a constant phase factor, there exists a unique unimodular map $u^*_a
 \in \mathcal{C}^\infty(\H \setminus \{a\},S^1)\cap W^{1,1}_{\rm
loc}(\H,S^1)$ such that
$$
r (iu^*_a,\nabla u^*_a) = rj(u^*_a) = -\nabla^\perp (rA_a).
$$   
In the sense of distributions in $\H$, we have
$$
\left\{
\begin{array}{ll}   
\displaystyle {\rm div}(rj(u^*_a)) & = 0\\
\displaystyle {\rm curl}(j(u^*_a)) & = 2\pi \delta_{a},
\end{array}
\right.
$$
and the function $u^*_a$ corresponds therefore to a {\it singular} vortex ring. 
In order to describe a {\it reference} vortex ring for the 
Gross-Pitaevskii 
equation, we shall make the notion of core more precise. In $\R^2$, the 
Gross-Pitaevskii equation possesses a distinguished stationary solution called 
vortex : in polar coordinates, it has the special form 
$$
u_{\eps}(r,\theta) = f_{\eps} 
(r)\exp(i\theta)
$$    
where the profile $f_{\eps} : \R^+ \to [0,1]$ satisfies $f_{\eps}(0)=0$,  
$f_{\eps}(+\infty)=1,$ and
$$
\partial_{rr}f_{\eps}  + \frac{1}{r}\partial_r f_{\eps} - 
\frac{1}{r^2}f_{ \eps} + 
\frac{1}{\eps^2} f_{\eps} (1-f_{\eps}^2) = 0.
$$
Notice that $\eps$ has the dimension of a length, and since by scaling 
$f_{\eps} (r)=f_{1}(\tfrac{r}{\eps})$ it is the 
characteristic length of the core. 

The reference vortex ring
associated to the point $a \in \H$ is defined to be 
\begin{equation*}
u_{\eps,a}^*(r,z) = f_\eps\big(\|(r,z)-a\|\big) u^*_a(r,z).
\end{equation*}
More generally, when $a = \{a_1,\cdots,a_n\}$ is a family of $n$ distinct 
points in $\H$, we  set 
$$
u_{a}^*(r,z) := \prod_{k=1}^n u^*_{a_k}(r,z),\quad\text{and}\quad  
u_{\eps,a}^*(r,z) := \prod_{k=1}^n u^*_{\eps,a_k}(r,z),
$$
where the products are meant in $\C.$ The field 
$u_{\eps,a}^*$ hence corresponds to a collection of $n$ 
reference vortex rings (sharing the same axis and oriented in the same 
direction), and is the typical kind of object which we shall study the evolution 
of. It can be shown that 
\begin{equation*}\label{eq:concjac*}
 \Big\|J u_{\eps,a}^* - \pi \sum_{i=1}^n 
\delta_{a_{i}}\Big\|_{\dot W^{-1,1}(\H)} = \ 
O(\eps)\quad\text{as } \eps \to 0,
\end{equation*}
where here and in the sequel, for 
a complex function $u$ on $\H$ we denote by $Ju$ its jacobian function 
$Ju= \partial_ru \times \partial_z u.$

\subsection{The system of leapfrogging}

Being an exact collection of (or even a single) reference vortex 
rings is not a 
property which is preserved by the flow of the Gross-Pitaevskii 
equation\footnote{Exact traveling waves having the form of vortex rings have 
been constructed in \cite{BOS-VR}, these are very similar in shape but not 
exactly equal to reference vortex rings.}. To carry out our analysis, 
 we rely mainly on the energy density and the current density.  
For cylindrically symmetric solutions $u\equiv u(r,z,t)$,  
the Gross-Pitaevskii equation writes  
$$
\left\{
\begin{array}{lll}
\displaystyle ir\partial_t u - {\rm div}(r\nabla u) =
\frac{1}{\eps^2}ru(1-|u|^2) &\text{in } &\H \times \R ,\\
\displaystyle \partial_r u = 0 & \text{on } 
&\partial\H\times\R.
\end{array}
\right.\leqno{\gp}
$$
Equation \GP is an hamiltonian flow for the (weighted) Ginzburg-Landau energy
$$
\Ew(u) := \int_{\H} \left( \frac{|\nabla u|^2}{2} +
\frac{(1-|u|^2)}{4\eps^2}\right) r\, dr dz,
$$
and the Cauchy problem is known to be well-posed for initial data with finite 
energy. Classical computations leads to the estimate :

\begin{lem}\label{lem:energy}
It holds
$$
 \Ew\big(u^*_{\eps,a}\big) = \sum_{i=1}^n r(a_i) \Big[ \pi 
\log\big(\tfrac{r(a_i)}{\eps}\big) + \gamma + \pi\big(3\log(2)-2\big) 
+ \pi\sum_{j\neq i}A_{a_j}(a_i) +   
O\big((\tfrac{\eps}{\rho_a})^{\tfrac23}\log^2(\tfrac{\eps}{\rho_a}) 
\big)\Big], 
$$
where 
\begin{equation}\label{eq:defrhoa}
 \rho_a :=\frac14\min\Big( \min_{i\neq j}|a_i-a_j|, \min_i 
r(a_i)\Big).
\end{equation}
\end{lem}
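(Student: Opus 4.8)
The plan is to reduce the computation to (i) the self-energy of a single reference ring $u^*_{\eps,a_i}$ and (ii) the interaction between distinct rings, exploiting the product structure $u^*_{\eps,a} = \prod_k u^*_{\eps,a_k}$ and the linear decomposition $j(u^*_a) = \sum_k j(u^*_{a_k})$, equivalently $rA_a = \sum_k rA_{a_k}$. Writing $u = u^*_{\eps,a}$ and expanding $|\nabla u|^2$, the energy splits into $\sum_i \Ew^{\rm loc}_i + \sum_{i\neq j}(\text{cross terms})$, and the bulk $(1-|u|^2)$ term is essentially carried by the individual profiles $f_\eps(\|\cdot - a_k\|)$ since they decay like $1-f_\eps^2 = O(\eps^2/\mathrm{dist}^2)$ away from their cores. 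First I would localize: on the ball $B(a_i,\rho_a)$ the field $u$ agrees, up to a smooth modulating phase coming from the \emph{other} rings, with $u^*_{\eps,a_i}$, and I would Taylor-expand that smooth modulation around $a_i$; its leading effect contributes the $\pi A_{a_j}(a_i)$ terms after one integration by parts (this is the standard renormalized-energy expansion, with the $1/r$ weight handled by freezing $r \approx r(a_i)$ on the small ball, producing the overall factor $r(a_i)$).

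Second, I would compute the single-ring self-energy. Here the key is the two-scale structure: on the inner scale $\eps$ the ring looks like the planar vortex $u_\eps = f_\eps(\cdot)e^{i\theta}$, contributing the logarithmically divergent part $\pi r(a_i)\log(1/\eps)$ together with the core constant $\gamma$ (defined, as usual in Ginzburg–Landau theory, as the finite renormalized energy of $f_1$); on the outer scale $\sim r(a_i)$ the field is the smooth singular ring $u^*_{a_i}$, whose energy $\frac12\int r|\nabla u^*_{a_i}|^2 = \frac12\int \frac{1}{r}|\nabla(rA_{a_i})|^2$ I would evaluate from the explicit elliptic-integral formula for $A_{a_i}$, whose short-distance asymptotics near $a_i$ is $A_{a_i}(x) \sim \log\!\big(\tfrac{8 r(a_i)}{\|x-a_i\|}\big) - 2 + o(1)$ — this is exactly where the $3\log 2$ and the $-2$ come from (one $\log 2$ worth of it, plus the $-2$, is already visible in Dyson's classical formula; the remaining $2\log 2$ arises in matching the cutoff between the inner core integral and the outer integral). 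Matching the inner and outer expansions on an intermediate annulus $\eps \ll \lambda \ll r(a_i)$, with the $\lambda$-dependence cancelling, yields $r(a_i)[\pi\log(r(a_i)/\eps) + \gamma + \pi(3\log 2 - 2)]$.

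Third, the cross terms $\int r\,\nabla u^*_{\eps,a_i}\cdot\nabla u^*_{\eps,a_j}$ for $i\neq j$: away from both cores these are well-approximated by $\int r\, j(u^*_{a_i})\cdot j(u^*_{a_j}) = \int \frac{1}{r}\nabla^\perp(rA_{a_i})\cdot\nabla^\perp(rA_{a_j})$, which by integration by parts and the PDE $-\mathrm{div}(\tfrac1r\nabla(rA_{a_i})) = 2\pi\delta_{a_i}$ equals $2\pi\, r(a_j) A_{a_i}(a_j)$ — wait, more precisely $2\pi A_{a_i}$ tested against $\delta_{a_j}$ weighted appropriately gives the symmetric quantity $\pi r(a_i)A_{a_j}(a_i) + \pi r(a_j)A_{a_i}(a_j)$ distributed so each ring's bracket carries $\pi\sum_{j\neq i}A_{a_j}(a_i)$; since $A_{a_i}$ is smooth and bounded near $a_j$ (as $|a_i-a_j|\gtrsim\rho_a$), there is no divergence and the replacement of $u^*_{\eps,a_k}$ by $u^*_{a_k}$ costs only $O(\eps)$ in $j$ and hence $O(\eps^2\log)$-type corrections, absorbed in the error.

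The main obstacle is \textbf{the sharpness of the error term}, specifically getting $O\big((\eps/\rho_a)^{2/3}\log^2(\eps/\rho_a)\big)$ rather than a cruder $O(\eps/\rho_a)$ or $O((\eps/\rho_a)^{1/2})$: this forces a careful, quantitative two-scale matching in which the intermediate cutoff radius $\lambda$ must be chosen optimally — balancing the $O((\lambda/\rho_a)^2)$ error from freezing the $r$-weight and the smooth phase modulation on $B(a_i,\lambda)$ against the $O((\eps/\lambda)^2\log^2)$ error from replacing the true profile by $f_\eps$ outside the core, whose optimum sits near $\lambda \sim \rho_a^{1/3}\eps^{2/3}$ and produces precisely the stated exponent $2/3$. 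Everything else — the integration by parts, the explicit elliptic asymptotics of $A_a$ (Appendix A), the exponential decay estimates on $1-f_\eps^2$ and $|\nabla(u_\eps^*/|u_\eps^*|)| $ — is routine Ginzburg–Landau bookkeeping, and I would streamline it by working throughout with $v_\eps := u/u^*_a = \prod_k f_\eps(\|\cdot-a_k\|)$-type moduli so that the phase is handled exactly and only the modulus needs estimating.
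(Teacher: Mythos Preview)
Your strategy coincides with the paper's: split at an intermediate radius $\rho$ around each core, compute the outer phase energy $\int_{\H_{a,\rho}}\tfrac12|j(u^*_a)|^2\,r$ by integration by parts (this is exactly the content of the paper's Lemma~A.1), and on the inner balls invoke the BBH expansion for the planar profile $f_\eps e^{i\theta}$ to produce $\pi\log(\rho/\eps)+\gamma$. The interaction terms $\pi\sum_{j\neq i}A_{a_j}(a_i)$ and the constant $3\log 2-2$ both fall out of the outer boundary integral on $\partial B(a_i,\rho)$ via the short-distance asymptotics $A_a(x)=\log(r(a)/\|x-a\|)+3\log 2-2+O\bigl(\tfrac{\|x-a\|}{r(a)}|\log|\bigr)$. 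So the architecture is right, and in fact the paper's own proof is the one-line sentence ``combine Lemma~A.1 with the BBH profile expansion''.

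However, your error bookkeeping is internally inconsistent. You claim the competing errors are $O((\lambda/\rho_a)^2)$ from inner freezing and $O((\eps/\lambda)^2\log^2)$ from outer profile replacement, balanced at $\lambda\sim\eps^{2/3}\rho_a^{1/3}$. But two quadratic errors of this shape balance at $\lambda=\sqrt{\eps\rho_a}$, giving $O(\eps/\rho_a)$ --- strictly smaller than the stated $(\eps/\rho_a)^{2/3}$, and not at the scale you name. The balance that actually produces the $2/3$ exponent pairs a \emph{linear} error $O\bigl((\lambda/\rho_a)\log^2\bigr)$ against $(\eps/\lambda)^2$: setting $\lambda/\rho_a=(\eps/\lambda)^2$ gives $\lambda^3=\eps^2\rho_a$, hence $\lambda=\eps^{2/3}\rho_a^{1/3}$ and error $(\eps/\rho_a)^{2/3}$. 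That linear term is genuine and is precisely the remainder in Lemma~A.1: it comes from the first-order correction $O(\rho/r(a)\,|\log|)$ in the asymptotic expansion of $A_a$ near its singularity, which enters the boundary integrals on $\partial B(a_i,\rho)$. (Freezing $r\approx r(a_i)$ on the inner ball likewise contributes a linear $O(\rho\log(\rho/\eps))$ unless you invoke the odd symmetry of $r-r(a_i)$ against a radial density, which you do not.) So the sharp exponent is driven by the linear remainder in the elliptic-integral asymptotics of $A_a$, not by a quadratic-versus-quadratic trade-off; once you correct that attribution the rest of your outline goes through.
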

\noindent
In Lemma \ref{lem:energy}, the constant $\gamma$ is defined by (see 
\cite{BBH})
$$
\gamma := \liminf_{\eps \to 0} \Big[ \E(v_\eps,B_1) - \pi \logeps 
\Big] \text{ with }  v_\eps \in H^1(B_1,\C) \text{ and } v_\eps(z)=z \text{ on 
} \partial B_1,  
$$
where $B_1$ is the unit disk in $\R^2$ and where for an open subset $\Omega 
\subset \R^2$ and $u \in H^1_{\rm 
loc}(\Omega,\C)$ we denote the unweighted two-dimensional Ginzburg-Landau 
energy of $u$ in $\Omega$ by
$$
\E(u,\Omega) =\int_\Omega e_\eps(u) d\mathcal{L}^2 := \int_{\Omega} \left( 
\frac{|\nabla u|^2}{2} +
\frac{(1-|u|^2)}{4\eps^2}\right) d\mathcal{L}^2.
$$

In light of Lemma \ref{lem:energy}, we define the quantity
$$
H_\eps(a_1,\cdots,a_n) := \sum_{i=1}^n r(a_i) \Big[ \pi 
\log\big(\tfrac{r(a_i)}{\eps}\big) + \gamma + \pi\big(3\log(2)-2\big) 
+ \pi\sum_{j\neq i}A_{a_j}(a_i)\Big],
$$
and we consider the associated hamiltonian system 
$$
\hspace{1.5cm} \dot{a}_i(s) = \ \frac{1}{\pi\logeps}
\mathbb{J}\nabla_{a_i}H_\eps\big(a_1(s),\cdots,a_n(s)\big),  \hspace{2.5cm} 
{i=1,\cdots,n,} \leqno{\lf_\eps}
$$
where, with a slight abuse of notation, 
$$
\mathbb J := \begin{pmatrix}0 & -\frac{1}{r(a_i)} \\ 
\frac{1}{r(a_i)} & 0\end{pmatrix}.
$$
In addition to the hamiltonian $H_\eps,$ the  
system $\lf_\eps$ also conserves the momentum
$$
P(a_1,\cdots,a_n) := \pi \sum_{k=1}^n r^2(a_k),
$$
which may be interpreted as the total area of the disks determined by the vortex
rings. As a matter of fact, note also that
$$
\P(u_{\eps,a}^*) := \int_\H Ju_{\eps,a}^*\, 
r\, 
drdz = \pi \sum_{k=1}^n r^2(a_k) + o(1),
$$
as $\eps \to 0,$ and that, at least formally, the momentum  $\P$ is a 
conserved quantity for \GP.

When $n=2$, the system $\lf_\eps$ may be analyzed in great details. Since $P$ 
is conserved and since $H_\eps$ is invariant by a joint translation of both  
rings in the $z$ direction, it is classical to introduce the variables 
$(\eta,\xi)$ 
by
$$
\left\{
\begin{array}{l}
r^2(a_1)=\frac{P}{2} - \eta\\
r^2(a_2)=\frac{P}{2} + \eta
\end{array}
\right.,
\qquad
\xi=z(a_1)-z(a_2),
$$
and to draw the level curves of the function $H_\eps$ in those two 
real variables, the momentum $P$ being considered as a parameter. 

The next 
figure illustrates the global behavior of the phase portrait, with three 
distinct regions which we have called ``{\it pass through}'', ``{\it attract 
then repel}'' 
and ``{\it leapfrogging}''. 
The leapfrogging region corresponds to the central part, where all solutions 
are periodic in time; its 
interpretation was discussed earlier in this introduction. In the 
pass through region, the first vortex ring always remains the smallest, hence 
quickest, of the two vortex rings : being initially located below the second 
vortex ring on the z-axis it first catches 
up, then  passes inside the second  and finally gets away in front of 
it\footnote{A similar situation is described by Hicks \cite{Hicks} for a 
simplified vortex model introduced by Love \cite{Love} in 1894.}.  
Instead, in the attract then repel region the first vortex ring 
initially starts to catch up, but doing so its circular radius increases 
whereas the one of the second vortex ring decreases, up to a point where  both 
vortex rings have the same radius and the first still lag behind the second. 
From that point on, the first one has a larger radius than the second, and 
therefore the second increases its lead indefinitely. The behavior 
in those 
last two regions is actually very much reminiscent of two-solitons 
interactions in the Korteweg - de Vries equation, in particular the speeds at 
plus and minus 
infinity in time are equal or exchanged. Notice also that the two points at the 
common boundary of the three regions correspond, up to labeling, to the same 
situation : two vortex rings travel with the same constant speed at 
 a special mutual 
distance\renewcommand{\thefootnote}{\ddag}\footnote{We stress that 
this holds at the level of the system $\lf_\eps$, we do not know whether such 
special solutions exist 
at the level of equation $\gp$.}\renewcommand{\thefootnote}{\dag}.    
\begin{figure}[h!]\label{fig:two}
\centering
\input{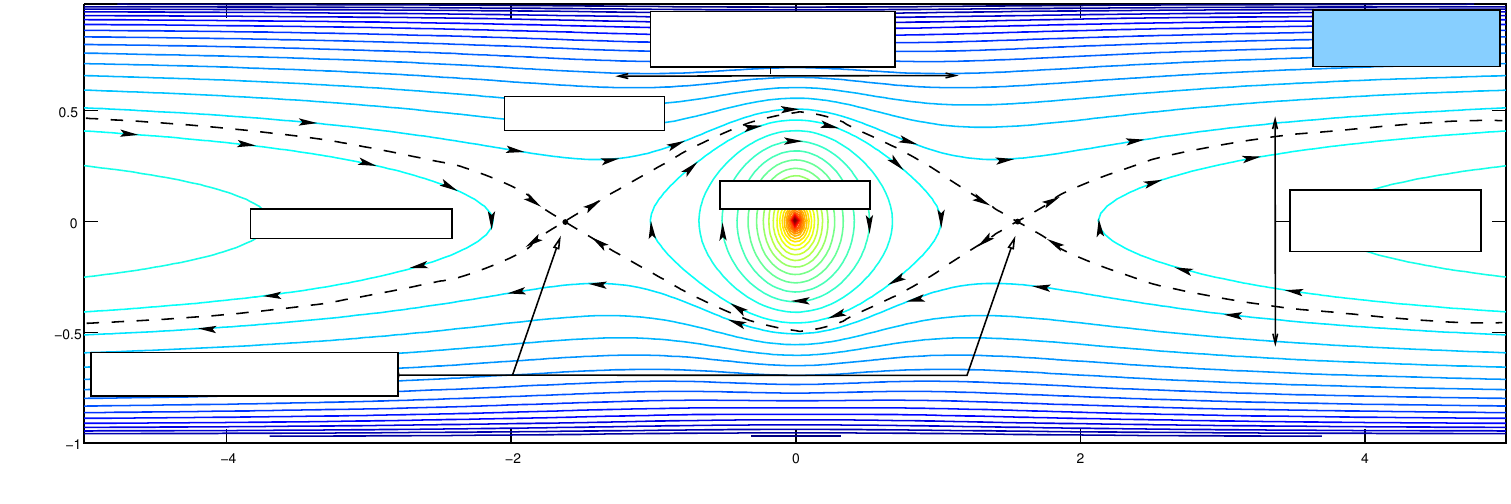_t}
\caption{Phase portrait of the system $\lf_\eps$ for two vortex rings}
\end{figure}

The typical size of the leapfrogging region is also described in the figure. 
In particular, it shrinks and becomes more flat as $\eps$ decreases towards  
zero.

\subsection{Statement of the main results}

We present two results in this section. The first one follows rather easily  
from the second, but its statement has the advantage of being 
somewhat simpler. On the other hand, it involves a limiting procedure $\eps \to 
0,$ whereas the second one is valid for small but fixed values of $\eps.$  

\smallskip

In order to state those results, and in view of the size of the leapfrogging 
region mentioned at the end of the previous subsection, we fix some 
$(r_0,z_0)\in \H$, 
an integer $n\geq 1$, and $n$ distinct points $b_1^0,\cdots,b_n^0$ in $\R^2$. 
The initial positions of the cores of the vortex rings are then set to be 
$$
a_{i,\eps}^0 := \Big(r_0+ \frac{r(b_i^0)}{\sqrt{\logeps}} , 
z_0 + \frac{z(b_i^0)}{\sqrt{\logeps}} \Big),\qquad i=1,\cdots,n.
$$
As a matter of fact, this is the 
appropriate scaling for which {\bf relative} self-motion and  interactions between 
vortex-rings are of the same magnitude. 
In any scaling in which $a^0_{i,\eps} - (r_0,z_0) = o(1)$ as $\eps \to 0$ 
for all i, the ``leading-order'' vortex motion is
expected to be a translation with constant velocity $1/r_0$ in the vertical
direction and in the rescaled time. The above scaling is the appropriate one for which, in
the next-order correction, the difference in the self-motion speeds (due to different
values of the radii at the next order) and interaction between vortices
are of the same magnitude. In the case of two vortices, for example,
this will give rise to small-scale periodic corrections to a leading-order
translation, which is the signature of "leapfrogging".

Note that $a_{i,\eps}^0 \in \H$ provided $\eps$ is sufficiently small, which we 
assume throughout. Concerning their evolution, we consider the solution to 
the Cauchy problem 
for the system of ordinary differential equations
$$
\qquad \quad
\left\{
\begin{array}{rl}
\displaystyle \dot{b}_i(s) &= \ \sum_{j\neq i}
\frac{(b_i(s)-b_j(s))^\perp}{\|b_i(s)-b_j(s)\|^2} -
\frac{r(b_i(s))}{r_0^2}\begin{pmatrix}0\\1\end{pmatrix}\\
 \displaystyle b_i(0) &= \ b_i^0
\end{array}
\right. \qquad \qquad i=1,\cdots,n,\leqno{\lf}
$$
 and we finally set 
\begin{equation}\label{eq:ais}
 a_{i,\eps}(s) := \Big(r_0+ \frac{r(b_i(s))}{\sqrt{\logeps}} , 
z_0 + \frac{s}{r_0} + \frac{z(b_i(s))}{\sqrt{\logeps}} \Big).
\end{equation}
System {\lf} and \eqref{eq:ais} describe the main order asymptotic of 
$\lf_\eps$ in the leapfrogging region, after a proper rescaling in time. 

\medskip
We will prove

\begin{thm}\label{thm:main_asympt}  Let $(u_\eps^0)_{\eps>0}$ 
be a family of initial
data for \GP such that 
\begin{equation}\label{eq:concjac0}
 \Big\|J u_\eps^0 - \pi \sum_{i=1}^n 
\delta_{a_{i,\eps}^0}\Big\|_{\dot W^{-1,1}(\Omega)} = \ 
o\Big(\frac{1}{\logeps}\Big),
\end{equation}
as $\eps \to 0$, for any open subset $\Omega$ strongly included in 
$\H$. Assume also that
\begin{equation}\label{eq:excess0}
 \Ew(u_\eps^0) \leq 
H_\eps(a_{1,\eps}^0,\cdots,a_{n,\eps}^0) + 
o(1), \text{ as } \eps \to 0.
\end{equation}
Then, for every $s \in \R$ and every open subset $\Omega$
strongly included in $\H$ we have 
\begin{equation}\label{eq:concjact}
 \Big\|J u_\eps^s - \pi \sum_{i=1}^n 
\delta_{ a_{i,\eps}(s)}\Big\|_{\dot W^{-1,1}(\Omega)} =
o\Big(\frac{1}{\sqrt{\logeps}}\Big)
\end{equation}
where we denote by $u_\eps^s$ the solution of {\GP} with initial datum 
$u_\eps^0$ and evaluated at time $t=s/\logeps,$ and  
where the points $a_{i,\eps}(s)$ are defined in 
\eqref{eq:ais} through the solution of the system \LF.
\end{thm}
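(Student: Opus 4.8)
\emph{Strategy.} The argument compares two conserved quantities of \GP, the weighted energy $\Ew(u_\eps^t)$ and (formally) the momentum $\P$, with the Hamiltonian structure of the finite--dimensional system $\lf_\eps$. Let $\hat a_{i,\eps}(t)$ solve $\lf_\eps$ with $\hat a_{i,\eps}(0)=a_{i,\eps}^0$; since $H_\eps$ is autonomous and conserved along $\lf_\eps$, the \emph{excess}
$$
\Sigma_\eps(t):=\Ew(u_\eps^t)-H_\eps\big(\hat a_{1,\eps}(t),\dots,\hat a_{n,\eps}(t)\big)=\Ew(u_\eps^0)-H_\eps\big(a_{1,\eps}^0,\dots,a_{n,\eps}^0\big)
$$
is constant in $t$, hence equals $o(1)$ by the upper bound \eqref{eq:excess0} together with a sharp energy \emph{lower} bound applied at $t=0$ through \eqref{eq:concjac0}. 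The program is then: (i) show that smallness of $\Sigma_\eps$ forces $u_\eps^t$ to carry exactly $n$ identifiable vortices $\tilde a_i(t)$; (ii) derive a motion law $\dot{\tilde a}_i=\tfrac1\pi\mathbb J\nabla_{a_i}H_\eps(\tilde a)+\mathrm{Err}_i$ with $\mathrm{Err}_i$ controlled by $\Sigma_\eps$; (iii) run a Gronwall/continuation argument giving $\|\tilde a_i(t)-\hat a_{i,\eps}(t)\|=o(1/\sqrt{\logeps})$ on compact intervals of the slow time $s=\logeps\,t$; and (iv) compare $\lf_\eps$ with \LF. Step (iv) is elementary: in the variables $b$ with $a=(r_0,z_0)+b/\sqrt{\logeps}$ and time $s$, the self--energy term $\pi r(a_i)\log(r(a_i)/\eps)$ of $H_\eps$ produces, through the $r$--dependence of the symplectic matrix $\mathbb J$, exactly the vertical translation at speed $1/r_0$ together with the next order correction $-r(b_i)/r_0^2$, while the mutual terms $\pi r(a_i)A_{a_j}(a_i)$ produce $\sum_{j\neq i}(b_i-b_j)^\perp/\|b_i-b_j\|^2$; the constants $\gamma$ and $\pi(3\log2-2)$ affect $\dot b_i$ only at order $1/\sqrt{\logeps}$ and wash out.

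\emph{The sharp lower bound (main difficulty).} The analytic heart of (i) and (iii) is a quantitative, weighted, multi--vortex refinement of the usual Ginzburg--Landau $\Gamma$--liminf inequality: if $\|Ju-\pi\sum_i\delta_{a_i}\|_{\dot W^{-1,1}}$ is small and the $a_i$ are at mutual distance $\gtrsim 1/\sqrt{\logeps}$ inside a fixed compact subset of $\H$, then
$$
\Ew(u)\geq H_\eps(a_1,\dots,a_n)-o(1),
$$
together with a coercive companion bounding $\|Ju-\pi\sum_i\delta_{a_i}\|_{\dot W^{-1,1}}$ and a localized $H^1$--distance of $u$ to $u_{\eps,a}^*$ by the excess. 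The obstruction is that the vortices are only $O(1/\sqrt{\logeps})$ apart, so the usual disjoint vortex--ball construction does not apply directly. I would work at a mesoscopic scale $\lambda$ with $\eps\ll\lambda\ll1/\sqrt{\logeps}$: on each ball of radius $\lambda$ about $a_i$ the weight $r$ is $r(a_i)(1+o(1))$, which reduces the self--energy count to the flat two--dimensional estimate of \cite{BBH} and yields $\pi r(a_i)\log(\lambda/\eps)+\gamma+o(1)$; on the complement one compares $rj(u)$ with $-\nabla^\perp(rA_a)$ to extract the quadratic interaction $\pi\sum_i r(a_i)\sum_{j\neq i}A_{a_j}(a_i)$, and matching the inner and outer regions across the scale $\lambda$ supplies the remaining $\pi\sum_i r(a_i)(3\log2-2)$. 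Carrying this out with $o(1)$ error, uniformly down to separation $1/\sqrt{\logeps}$, is what I expect to be the most delicate and technical part of the whole argument.

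\emph{The motion law.} Granted $\Sigma_\eps(t)=o(1)$ and, on a bootstrap interval, the vortex structure of $u_\eps^t$, I would obtain the motion of $\tilde a_i$ by testing the local conservation laws of \GP. For each $i$ choose a vector field $\phi_i$ that agrees with a fixed rotation field about $\tilde a_i$ on the ball of radius $\lambda$ and vanishes before the other vortices, and compute $\tfrac{d}{dt}\int_\H\phi_i\cdot j(u_\eps^t)\,r\,dr\,dz$; via the equation this equals the weighted stress--energy tensor of $u_\eps^t$ paired with $\nabla\phi_i$, plus terms involving $\partial_t|u_\eps^t|^2$ that are handled using the local energy identity $\partial_t\big(r\,e_\eps(u_\eps^t)\big)+\mathrm{div}(\cdots)=0$. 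The stress--energy tensor concentrates at $\tilde a_i$ with weight fixed by the (now sharp) energy, producing $\tfrac1\pi\mathbb J\nabla_{a_i}H_\eps(\tilde a)$, whereas its ``excess'' part, in particular the compressible acoustic component of $j(u_\eps^t)$ genuinely present for Gross--Pitaevskii, has weighted $L^2$--norm squared $\lesssim\Sigma_\eps$ and is absorbed into $\mathrm{Err}_i$, giving $|\mathrm{Err}_i|\lesssim\sqrt{\Sigma_\eps(t)\,\logeps}$ in physical time, i.e.\ $o(1/\sqrt{\logeps})$ in the variable $s$. This is where the Hamiltonian (rather than gradient--flow) nature of \GP enters; it is close in spirit to existing Gross--Pitaevskii vortex--dynamics results but must be pushed to an accuracy compatible with the $o(1/\sqrt{\logeps})$ target.

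\emph{Closing the bootstrap.} Since $\Ew(u_\eps^t)$ and $H_\eps(\hat a_\eps(t))$ are separately constant, $\Sigma_\eps(t)\equiv\Sigma_\eps(0)=o(1)$ requires no bootstrap; the only continuation argument needed is the geometric one, that $u_\eps^t$ keeps $n$ distinct, correctly located vortices. On the maximal such interval, the motion law and a Gronwall estimate, in which the Lipschitz constant of $a\mapsto\mathbb J\nabla H_\eps(a)$ is $O(\logeps)$ (from second derivatives of the interaction at distance $1/\sqrt{\logeps}$), yield $\|\tilde a_i(t)-\hat a_{i,\eps}(t)\|\lesssim e^{Cs}\,s\,\sqrt{\Sigma_\eps/\logeps}=o(1/\sqrt{\logeps})$ for $t=s/\logeps$, $s$ in any fixed compact set, which strictly improves the bootstrap hypothesis, so the interval is all of $\R$ (or, in general, the existence interval of the solution of \LF). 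Combining this with the coercive part of the lower bound, which keeps $\|Ju_\eps^t-\pi\sum_i\delta_{\tilde a_i(t)}\|_{\dot W^{-1,1}(\Omega)}$ much smaller than $1/\sqrt{\logeps}$ (the dominant error being the ODE comparison), with $\|\hat a_{i,\eps}(t)-a_{i,\eps}(s)\|=o(1/\sqrt{\logeps})$ from step (iv), and with $\|\delta_x-\delta_y\|_{\dot W^{-1,1}}=\|x-y\|$, the triangle inequality for $\dot W^{-1,1}$ gives \eqref{eq:concjact}. The estimates above are already a statement valid for fixed small $\eps$, in the spirit of Theorem~\ref{thm:main}, and Theorem~\ref{thm:main_asympt} follows from it by letting $\eps\to0$.
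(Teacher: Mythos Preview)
Your overall plan---sharp lower bound with coercive remainder, testing the evolution of $Ju$ to extract the motion law, Gronwall---is exactly the paper's, and your identification of the lower bound at separation $1/\sqrt{\logeps}$ as the main technical obstacle is correct. But there is a genuine gap in your error control that prevents the bootstrap from closing. The coercive companion of the lower bound controls $\int r\,|j(u)/|u|-j^*_{\tilde a}|^2$ by $\Sigma_\xi:=\Ew(u)-H_\eps(\tilde a)$, the excess relative to the \emph{true} vortex locations $\tilde a$, not by your $\Sigma_\eps=\Ew(u)-H_\eps(\hat a)$. These differ by $H_\eps(\hat a)-H_\eps(\tilde a)$, and because the self-energy term $\pi r(a_i)\log(r(a_i)/\eps)$ gives $|\nabla H_\eps|\sim\logeps$, one only has $\Sigma_\xi\le\Sigma_\eps+C\logeps\,|\tilde a-\hat a|$. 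Under the bootstrap $|\tilde a-\hat a|=o(1/\sqrt{\logeps})$ this yields $\Sigma_\xi=o(\sqrt{\logeps})$, not $o(1)$; feeding that back into the motion-law error produces a $\sqrt{|\tilde a-\hat a|}$ term in the slow-time Gronwall, which integrates to $O(1)$ rather than $o(1/\sqrt{\logeps})$. The paper's resolution, singled out as the ``key argument'' (Proposition~\ref{cor:apprixgood}), is to exploit a \emph{second} approximately conserved quantity, the localized momentum $P_\eps(u)\approx P(\xi)$: the leading $\logeps$ part of $\nabla H_\eps$ is exactly $\tfrac{\logeps}{2r_0}\nabla P$ (this is Remark~\ref{rem:1}), so $H_\eps-\tfrac{\logeps}{2r_0}P$ has gradient only $O(\sqrt{\logeps})$, and since $P$ is separately conserved one obtains $\Sigma_\xi\le 2\Sigma_\eps+C\sqrt{\logeps}\,|\tilde a-\hat a|+\ldots$, which makes the Gronwall linear and closable. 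You mention $\P$ only as a formal invariant; it must be used substantively here.

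A second point: even with the correct $\Sigma_\xi$ bound, the cross terms of the form $\int(j(u)/|u|-j^\natural)\cdot j^\natural\cdot\partial\varphi$ are not controlled by Cauchy--Schwarz alone at the required precision, since $j^\natural$ is singular near the cores. The paper integrates these over a short time window, Helmholtz-decomposes $j(u)-j^\natural$, and uses the \emph{continuity equation} $\partial_t|u|^2=\tfrac{2}{r}\mathrm{div}(rj(u))$ (not the energy identity you invoke) to show that the divergence part averages out; this is why the discrete Gronwall in Proposition~\ref{prop:controldiscrep} carries the specific step $(r_\xi^s)^2/\eps$.
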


In the statement of Theorem \ref{thm:main_asympt}, the $\dot W^{-1,1}$ norm is defined by 
$$
\|\mu\|_{\dot W^{-1,1}(\Omega)} = {\rm sup}\left\{ \int \varphi \, d\mu, \ \varphi \in W^{1,\infty}_0(\Omega),\ \|\nabla \varphi\|_\infty \leq 1\right\}.  
$$

\begin{rem}\label{rem:1}
    Asymptotic formulas for the potential vectors $A_{a_i}$ (see Appendix A) lead to the 
equivalence 
\begin{equation}\label{eq:Hepseq}
H_\eps(a_{1,\eps},\cdots,a_{n,\eps}) = \Gamma_\eps(r_0,n) + 
W_{\eps,r_0}(b_1,\cdots,b_n) + o(1) \text{ as } \eps \to 0,
\end{equation}
where 
$
\Gamma_\eps(r_0,n) = nr_0( \pi |\log\eps| + \gamma + \pi n\log r_0 + \pi n 
(3\log2-2) + \pi \tfrac{n-1}{2}\log|\log\eps|)
$
and 
\begin{equation}\label{eq:Weps}
W_{\eps,r_0}(b_1,\cdots,b_n) = \pi \sum_{i=1}^n r(b_i) \sqrt{\logeps} - \pi r_0 
\sum_{i\neq j}
\log|b_i-b_j|.
\end{equation}
Also, expansion of the squares leads directly to
$$
P(a_{1,\eps},\cdots,a_{n,\eps}) = \pi n r_0^2 + 2\pi r_0\sum_{i=1}^n 
\frac{r(b_i)}{\sqrt{\logeps}} + \pi \sum_{i=1}^n \frac{r(b_i)^2}{\logeps},
$$
and therefore 
$$
\Big(H_\eps- \frac{\logeps}{2r_0}P\Big)(a_{1,\eps},\cdots,a_{n,\eps}) = 
-\frac{\pi}{2} nr_0 \logeps +\Gamma_\eps(r_0,n)+ \pi r_0 W(b_1,\cdots,b_n) + o(1),\quad\text{as } 
\eps \to 0, 
$$
where 
$$
W(b_1,\cdots,b_n) :=  -  
\sum_{i\neq j}
\log|b_i-b_j|- \frac{1}{2r_0^2} \sum_{i=1}^n r(b_i)^2.
$$
The function $W$, which does not depend upon $\eps$, is precisely the 
hamiltonian for the system \LF. A second quantity preserved by {\LF} is given
by $Q(b_1,\cdots,b_n):=\sum_{i=1}^n r(b_i).$ When $n=2$, all 
the solutions are {\LF} are periodic in time.
\end{rem}

We will now state  a quantitative version of Theorem 
\ref{thm:main_asympt} which holds for small but fixed values of $\eps,$ 
not just asymptotically as $\eps \to 0.$ 
We fix  positive constants $K_0$ and $r_0$ and we consider an arbitrary  solution 
$a_\eps(s)\equiv\{a_{i,\eps}(s)\}_{1\leq i\leq n}$ of the system $\lf_\eps$ on some 
time interval $[0,S_0]$, $S_0\geq 0,$ which we assume to satisfy  
\begin{equation}\label{eq:bonneechelle}
    \begin{array}{c}
        \displaystyle \frac{K_0^{-1} }{\sqrt{\logeps}}\leq \min_{s\in [0,S_0]}\min_{i\neq j} 
    |a_{i,\eps}(s)-a_{j,\eps}(s)| 
    \leq \max_{s\in [0,S_0]}\max_{i\neq j} |a_{i,\eps}(s)-a_{j,\eps}(s)| \leq \frac{K_0}{\sqrt{\logeps}}\\
\displaystyle \frac{r_0}{2} \leq \min_{s\in [0,S_0]}\min_{i} r(a_{i,\eps}(s))
\leq \max_{s\in [0,S_0]}\max_{i} r(a_{i,\eps}(s))\leq 2r_0.
\end{array}
\end{equation}

We define the localization scale  
\begin{equation}\label{eq:local0}
 r_a^0 := \Big\|J u_\eps^0 - \pi \sum_{i=1}^n 
 \delta_{a_{i,\eps}^0}\Big\|_{\dot W^{-1,1}(\Omega_0)},
\end{equation}
where $\Omega_0:= \{r\geq \frac{r_0}{4}\}$, and the excess energy 
\begin{equation}\label{eq:exc0}
\Sigma^0 := \left[\Ew(u_\eps^0) - 
H_{\eps}(a_{1,\eps}^0,\cdots,a_{n,\eps}^0)\right]^+
\end{equation}
at the initial time.

\begin{thm}\label{thm:main}
Let $a_\eps(s)\equiv\{a_{i,\eps}(s)\}_{1\leq i\leq n}$ be a solution of the system $\lf_\eps$
on some time interval $[0,S_0]$, $S_0\geq 0,$ which satisfies \eqref{eq:bonneechelle}. There exist positive 
numbers $\eps_0$, $\sigma_0$ and $C_0$, depending only on $r_0$, $n$,    
$K_0$ and $S_0$ with the following properties. Assume that $0 < \eps \leq 
\eps_0$ and that
\begin{equation}\label{eq:excesspetit}
r_a^0\logeps + \Sigma^0 \leq \sigma_0,
\end{equation}
then  
$$
\Big\|J u_\eps^s - \pi \sum_{i=1}^n 
\delta_{a_{i,\eps}(s)}\Big\|_{\dot W^{-1,1}(\Omega_0)} \leq C_0\left( r_a0 + \frac{\Sigma^0}{\sqrt{\logeps}}
+ \frac{C_\delta}{\logeps^{1-\delta}}\right)e^{C_0s},
$$
for every $s \in [0,S_0],$ where $\delta>0$ can be chosen arbitrarily small. 
\end{thm}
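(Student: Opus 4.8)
\noindent The plan is to run a Gronwall estimate on the discrepancy between the Jacobian of the solution and the prescribed configuration, driven by conservation laws. Two quantities are exactly conserved: the weighted energy $\Ew$, preserved along {\GP} (so that $\Ew(u_\eps^s)$ is independent of $s$), and the Hamiltonian $H_\eps$, preserved along $\lf_\eps$ by anti-symmetry of $\mathbb J$. Consequently the \emph{excess energy}
$$
\Sigma(s):=\Ew(u_\eps^s)-H_\eps\big(a_{1,\eps}(s),\dots,a_{n,\eps}(s)\big)
$$
is independent of $s$, and since $a_{i,\eps}(0)=a_{i,\eps}^0$ we get $\Sigma(s)\equiv\Sigma(0)\leq\Sigma^0$ on $[0,S_0]$. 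As the momentum is conserved both for {\GP} (via $\P$) and for $\lf_\eps$, it is in fact the modulated quantity $\Ew(u_\eps^s)-\tfrac{\logeps}{2r_0}\P(u_\eps^s)$ that carries the relevant information on the natural scale: by Remark~\ref{rem:1} its reference value varies only at order $O(1)$ in the rescaled positions, and its reduced gradient drives the $O(1)$ dynamics \LF.

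\medskip\noindent\textbf{Step 1: lower bound, extraction, continuation.} On $\Omega_0=\{r\geq r_0/4\}$ I would first establish, uniformly over configurations satisfying \eqref{eq:bonneechelle} (a clustering regime, with mutual distances $\sim 1/\sqrt{\logeps}$), a quantitative vortex-ball lower bound of Jerrard--Sandier type refining Lemma~\ref{lem:energy}: if $\Ew(u)\leq H_\eps(\hat a)+\sigma$ for some admissible $\hat a$ with $\sigma$ small, then there is an admissible $n$-tuple $\tilde a$, close to $\hat a$ and again obeying \eqref{eq:bonneechelle} with slightly worse constants, such that
$$
\Big\|Ju-\pi\sum_i\delta_{\tilde a_i}\Big\|_{\dot W^{-1,1}(\Omega_0)}\ \leq\ \frac{C}{\logeps}\,\big(\Ew(u)-H_\eps(\hat a)\big)^+\ +\ \frac{C_\delta}{\logeps^{1-\delta}}.
$$
Applying this at each time $s$ with $\hat a=a_\eps(s)$ produces extracted cores $\tilde a_\eps(s)$, continuous in $s$ because the $n$ mass-$\pi$ bumps of $Ju_\eps^s$ stay well separated by quantization of the degree, and
$$
d_\eps(s):=\Big\|Ju_\eps^s-\pi\sum_i\delta_{\tilde a_{i,\eps}(s)}\Big\|_{\dot W^{-1,1}(\Omega_0)}\ \leq\ \frac{C\,\Sigma^0}{\logeps}+\frac{C_\delta}{\logeps^{1-\delta}}\ \leq\ \frac{C\,\Sigma^0}{\sqrt{\logeps}}+\frac{C_\delta}{\logeps^{1-\delta}}.
$$
With $\sigma_0$ and $\eps_0$ chosen small, hypothesis \eqref{eq:excesspetit} then allows a standard open--closed continuation argument keeping $\tilde a_\eps(s)$ inside the enlarged region of \eqref{eq:bonneechelle} on all of $[0,S_0]$.

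\medskip\noindent\textbf{Step 2: the motion law with sharp error.} This is the heart of the proof. I would derive the evolution of the extracted cores from the local conservation laws of {\GP}: testing the equation against core-localized weighted vector fields modeled on the explicit potentials $A_{\tilde a_i}$ — equivalently, tracking $\frac{d}{ds}\int \vec X_i\cdot j(u_\eps^s)\,r\,dr\,dz$ and $\frac{d}{ds}\int\chi_i\,e_\eps(u_\eps^s)\,r\,dr\,dz$ for suitable cutoffs $\chi_i$ — and identifying the leading stress--energy contribution with $\tfrac{1}{\pi\logeps}\mathbb J\,\nabla_{a_i}H_\eps$. Two analytic difficulties dominate: (i) a sharp, clustering-uniform splitting of the localized energy and current into a fixed core part, a renormalized interaction part matching the asymptotics of $A_a$ recorded in Appendix~A, and a controlled remainder, uniform as the cluster contracts at rate $1/\sqrt{\logeps}$; and (ii) absorbing the compressible part of $u_\eps$ — the deviation of $|u_\eps|$ from $1$ together with the gradient part of $j(u_\eps)$, precisely what the excess energy measures — into an error of size $\Sigma^0/\sqrt{\logeps}$, on top of the unavoidable $C_\delta\logeps^{-(1-\delta)}$ loss inherent to the ball construction. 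The outcome, in the rescaled time, should be
$$
\dot{\tilde a}_{i,\eps}(s)=\frac{1}{\pi\logeps}\,\mathbb J\,\nabla_{a_i}H_\eps\big(\tilde a_\eps(s)\big)+R_{i,\eps}(s),\qquad |R_{i,\eps}(s)|\leq C\Big(d_\eps(s)+\frac{\Sigma^0}{\sqrt{\logeps}}+\frac{C_\delta}{\logeps^{1-\delta}}\Big).
$$
I expect this step — in particular the interplay between the clustering-uniform expansion and the control of the acoustic part by the excess energy — to be the main obstacle.

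\medskip\noindent\textbf{Step 3: Gronwall.} Finally, passing to the rescaled positions $\tilde b_{i,\eps}(s):=\sqrt{\logeps}\,\big(\tilde a_{i,\eps}(s)-(r_0,z_0+s/r_0)\big)$ turns the identity of Step~2 into an $R_{i,\eps}$-perturbation of the autonomous system \LF, whose right-hand side is Lipschitz with a constant $C_0=C_0(r_0,n,K_0)$ on the region prescribed by the enlarged \eqref{eq:bonneechelle}. Since $b_i(0)=b_i^0$ while $\sum_i|\tilde a_{i,\eps}(0)-a_{i,\eps}^0|\leq C\,(r_a^0+d_\eps(0))$, Gronwall's lemma yields
$$
\sum_i|\tilde a_{i,\eps}(s)-a_{i,\eps}(s)|\ \leq\ C_0\Big(r_a^0+\frac{\Sigma^0}{\sqrt{\logeps}}+\frac{C_\delta}{\logeps^{1-\delta}}\Big)e^{C_0 s},\qquad s\in[0,S_0],
$$
and combining this with Step~1 via
$$
\Big\|Ju_\eps^s-\pi\sum_i\delta_{a_{i,\eps}(s)}\Big\|_{\dot W^{-1,1}(\Omega_0)}\ \leq\ \sum_i|\tilde a_{i,\eps}(s)-a_{i,\eps}(s)|+d_\eps(s)
$$
gives the asserted estimate, the dependence of $\eps_0,\sigma_0,C_0$ on $S_0$ entering only through the final exponential.
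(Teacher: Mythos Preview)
Your overall architecture --- extract vortex cores at each time, derive an approximate motion law, and Gronwall against $\lf_\eps$ --- is the paper's strategy. But two essential mechanisms are missing from Step~2, and without them the pointwise error bound you claim for $R_{i,\eps}(s)$ is not available.

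The first and most serious gap is that the compressible part of the current cannot be controlled \emph{pointwise in time}. The stress--energy identity produces cross terms of the schematic form $\int (j(u)-j^\natural)\cdot j^\natural\, D\varphi$; after a Helmholtz decomposition $j(u)-j^\natural = \nabla f + r^{-1}\nabla^\perp g$ on a fixed domain, the piece $\nabla f$ is governed by ${\rm div}(rj(u))$, which is \emph{not} small at a fixed time --- by the continuity equation it equals $\tfrac{r}{2}\partial_t|u|^2$. The paper therefore never writes a pointwise ODE for the extracted cores (the $\xi_i(s)$ are taken merely piecewise constant and measurable); instead it integrates over a specific time window $[s,S]$ with $S-s=(r_\xi^s)^2/\eps$, chosen large enough that $\int_s^S {\rm div}(rj(u))$ telescopes to $\tfrac{r}{2}[|u|^2-1]_s^S$ of size $O(\eps)$, yet small enough that the cores move only $O(r_\xi^s)$ on $[s,S]$. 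This yields a \emph{discrete} Gronwall inequality for $r_a^s$ (Proposition~\ref{prop:controldiscrep}), iterated to conclude. Your continuous ODE $\dot{\tilde a}_{i,\eps}=\cdots+R_{i,\eps}$ with pointwise error control does not exist at this level of precision.

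Second, while you correctly flag the modulated quantity $H_\eps-\tfrac{\logeps}{2r_0}P$, its role in the paper is more specific than you indicate. After extraction the relevant excess is $\Sigma_\xi$, and a crude gradient bound on $H_\eps$ only gives $\Sigma_\xi\le \Sigma^0+Cr_a\logeps$. The key observation (Proposition~\ref{cor:apprixgood}) improves this to $\Sigma_\xi\le 2\Sigma^0+Cr_a\sqrt{\logeps}+C\logeps\,|P_\eps(u)-P(a)|$, using that $H_\eps-\tfrac{\logeps}{2r_0}P$ has gradient only $O(\sqrt{\logeps})$ in the clustering regime. This in turn requires a \emph{localized} momentum $P_\eps$ (since $\varphi=r^2$ is not admissible in \eqref{eq:fondam1}) together with its approximate conservation along \GP (Proposition~\ref{prop:momentumapprox}). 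Without this refinement the contribution from the second derivatives of $\varphi$ (of size $\sqrt{\logeps}$) feeds $Cr_a\sqrt{\logeps}$ rather than $Cr_a$ into the Gronwall, and the argument does not close. A minor additional point: in Step~3 you should compare $\tilde a_\eps$ directly to the $\lf_\eps$-trajectory $a_\eps$, not pass through the limit system \LF.
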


\medskip

To finish this introduction, let us mention that we have not analyzed the 
convergence of \GP towards $\lf_\eps$ in the ``pass through'' and ``attract 
then repel'' regions. It is conceivable, yet probably difficult, to 
obtain closeness estimates valid for all times in those cases, reminiscent of
what is sometimes called orbital stability of multi-solitons, e.g. in the 
Korteweg~-~de~Vries equation \cite{MaMeTs} or the 1D Gross-Pitaevskii 
equation \cite{BeGrSm}. One would have to deal with algebraic rather than 
exponential interaction estimates.

Also, having in mind the initial question related to the Euler 
equation, let us mention that one crucial advantage in the analysis of the 
Gross-Pitaevskii equation is that it has an inherent core localization scale 
$\eps.$ On the other hand, Euler velocity fields are divergence free, whereas
Gross-Pitaevskii ones only have small divergence when averaged in time.  
Analysis of leapfrogging for the Euler equation would therefore probably require 
a different strategy. 

\bigskip
\noindent{\bf Acknowledgements.} The research of RLJ was partially supported by the National Science and Engineering Council of Canada under operating grant 261955. The research of DS was
partially supported by the Agence Nationale de la Recherche through the project ANR-14-CE25-0009-01.

\section{Strategy for the proofs}\label{sect:strategy}

The overall strategy follows many of the lines which we adopted in our 
prior work \cite{JeSm3} on the inhomogeneous Gross-Pitaevskii equation\footnote{Another
work on the 2D inhomogeneous GP equation is a recent preprint of Kurzke et al \cite{KuMaSp}, which studies a situation where the inhomogeneity and its derivatives are of order $\logeps^{-1}.$ This is critical in the sense that interaction of vortices with the background potential and with each other are of the same order of magnitude. In the present work, by contrast, critical coupling occurs in hard-to-resolve corrections to the leading-order dynamics.} The 
effort is actually focused on Theorem \ref{thm:main} first, Theorem 
\ref{thm:main_asympt} can be deduced from it rather directly. The essential new ingredients
with respect to \cite{JeSm3} are refined approximation estimates (mainly Proposition \ref{prop:stronglocal})
and the key observation in Proposition \ref{cor:apprixgood}.

\subsection{Localisation, excess energy and approximation by a reference field}\label{subsect:local}

In this section we present arguments which are not directly related to the time evolution but
only to some assumptions on the energy density and on the Jacobian of a function $u$.  
In rough terms, we assume that $u$ is known a priori to satisfy
some localisation estimates and some energy upper bounds, and we will show, by combining them
together, that under a certain approximation threshold this can be improved by a large
amount, without any further assumption. 

In order to state quantitative results,  
we assume here that $\{a_i\}_{1\leq i \leq n}$ is
a collection of points in $\H$ such that 
\begin{equation}\label{eq:H1}
    \begin{array}{c}
        \displaystyle \frac{8}{\logeps} \leq \min_{i\neq j} |a_{i}-a_{j}| 
\leq \max_{i\neq j} |a_{i}-a_{j}| \leq K_1\\
\displaystyle \frac{r_0}{2} \leq r(a_{i})
\leq \max_{i} r(a_{i})\leq 2r_0.
\end{array}\tag{${H_1}$}
\end{equation}

We assume next that  $u \in H^1_{\rm 
loc}(\H,\C)$ is such that its Jacobian $Ju$ satisfies the rough 
localisation estimate
\begin{equation}\label{eq:defra}
r_a := \|Ju-\pi \sum_{i=1}^n \delta_{a_i}\|_{\dot 
W^{-1,1}(\Omega_0)}< \frac{\rho_a}{4},
\end{equation}
where $\rho_a$ is defined in \eqref{eq:defrhoa}.
We finally define the excess energy relative to those 
points,
\begin{equation}\label{eq:defSigma}
\Sigma_a:= \left[ \Ew(u)-H_\eps(a_1,\cdots,a_n)\right]^+.
\end{equation}
We will show that if $r_a$ and $\Sigma_a$ are not too large then actually a 
much better form of localisation holds. 


\begin{prop}\label{prop:stronglocal}
    Under the assumption $(H_1)$ and \eqref{eq:defra}, there exist constants $\eps_1,\sigma_1,C_1>0,$ 
depending only on $n$, $r_0$ and $K_1$, with the following properties. If 
$\eps \leq \eps_1$ and 
\begin{equation}\label{eq:boundinit}
    \Sigma_a^r := \Sigma_a +r_a\logeps\leq \sigma_1\logeps,
\end{equation}
then there exist $\xi_1,\cdots,\xi_n$ in $\H$ such 
that  
\begin{equation}\label{eq:dsd1}
    \|Ju-\pi\sum_{i=1}^n \delta_{\xi_i}\|_{\dot W^{-1,1}\left(\left\{r\geq C_1(\Sigma_\xi+1)/\logeps \right\}
    \right)} \leq C_1 \eps \logeps^{C_1}e^{C_1\Sigma_a^r},
\end{equation}
and
\begin{equation}\label{eq:dsd2}
\int_{\H \setminus \cup_i B(\xi_i,\eps^\frac23)} \!\!  
r \left[ e_\eps(|u|) + 
\big|\frac{j(u)}{|u|}-j(u_\xi^*)\big|^2\right]  \leq 
C_1\big(\Sigma_\xi + 
\eps^\frac13\logeps^{C_1}e^{C_1\Sigma_a^r }\big),
\end{equation}
where we have written
$$
\Sigma_\xi :=   \left[ \Ew(u)-H_\eps(\xi_1,\cdots,\xi_n)\right]^+.    
$$
Moreover,
\begin{equation}\label{eq:sigmaxicontrolbad}
\Sigma_\xi \leq \Sigma_a + C_1r_a\logeps + C_1 \eps \logeps^{C_1}e^{C_1\Sigma_a^r},
\end{equation}
and the values of $\eps_1$ and $\sigma_1$ are chosen sufficiently small so that
\begin{equation}\label{eq:securitybounds}
    C_1\logeps^{C_1}e^{C_1\sigma_1\logeps} \leq \eps^{-\frac16},\qquad\text{and}\qquad C_1(\Sigma_\xi+1)/\logeps \leq \frac{r_0}{4}
\end{equation}
whenever $\eps\leq \eps_1.$ \end{prop}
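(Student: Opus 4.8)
The plan is to bootstrap the rough localisation \eqref{eq:defra} into the much sharper estimates \eqref{eq:dsd1}--\eqref{eq:dsd2} by combining the energy upper bound encoded in $\Sigma_a$ with a lower-bound estimate that is nearly saturated. The starting point is the weighted Ginzburg--Landau lower bound: given the rough information that $Ju$ is $\dot W^{-1,1}$-close to $\pi\sum\delta_{a_i}$ at scale $r_a < \rho_a/4$, standard vortex-ball constructions (in the spirit of Jerrard--Sandier, adapted to the weight $r$ which is comparable to $r_0$ on $\Omega_0$ by $(H_1)$) produce disjoint balls $B(\xi_i,s_i)$ carrying the topological degree, with $\xi_i$ close to $a_i$, and yield
$$
\Ew(u) \ \geq\ \sum_{i=1}^n r(\xi_i)\Big[\pi\log\tfrac{s_i}{\eps} + \gamma + \pi(3\log2-2)\Big] + \text{(interaction)} + \text{(excess)},
$$
where the interaction term reproduces $\pi\sum_{i\neq j} A_{\xi_j}(\xi_i)$ up to controlled errors, and the ``excess'' collects the energy of $|u|$ away from $1$ and of the discrepancy $j(u)/|u| - j(u_\xi^*)$. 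Comparing with the definition of $H_\eps$ and with $\Sigma_a$ then forces this excess to be bounded by $\Sigma_a + C r_a\logeps$ (roughly $\Sigma_a^r$), which is the content of a quantitative ``$\Gamma$-liminf is nearly attained'' statement; this is essentially where \eqref{eq:dsd2} comes from once one also controls the balls' radii $s_i$ from below.

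The second half is the self-improvement of localisation, i.e. passing from the rough scale $r_a$ in \eqref{eq:defra} to the exponentially small scale in \eqref{eq:dsd1}, but only on the smaller region $\{r\geq C_1(\Sigma_\xi+1)/\logeps\}$. The mechanism is that once \eqref{eq:dsd2} holds, the current $j(u)$ is $L^2$-close (with weight) to the reference current $j(u_\xi^*)$ outside small core balls, and $\mathrm{curl}(j(u) - j(u_\xi^*))$ is then controlled; a duality/elliptic estimate for $\dot W^{-1,1}$ against this $L^2$ control upgrades the closeness of the Jacobians, the residual error being measured by the energy excess $\Sigma_\xi$ and an $\eps$-power coming from the core regions $B(\xi_i,\eps^{2/3})$. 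One has to be a little careful: the region must be shrunk to $\{r \gtrsim \Sigma_\xi/\logeps\}$ precisely because near $\partial\H$ the weight $r$ degenerates and a fixed amount of excess energy can disguise itself as a large $\dot W^{-1,1}$ discrepancy there — this is the price paid in \eqref{eq:dsd1}. The bound \eqref{eq:sigmaxicontrolbad} on $\Sigma_\xi$ in terms of $\Sigma_a$ is obtained along the way by plugging the just-proved closeness of $\xi_i$ to $a_i$ into the explicit Lipschitz dependence of $H_\eps$ on its arguments (using $(H_1)$ to keep everything in the good regime), and \eqref{eq:securitybounds} is merely the bookkeeping choice of $\eps_1,\sigma_1$ making all the error terms consistent, e.g. guaranteeing $\eps^{2/3} \leq s_i$ and that the localisation region stays inside $\Omega_0$.

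The main obstacle I expect is the quantitative rigidity: ordinary vortex-ball lower bounds lose a constant, not an $o(1)$, so to see that the excess is genuinely small one needs the sharp constant $\gamma$ (via the renormalised energy of the model vortex on $B_1$) together with a quantitative version of the clearing-out / monotonicity argument that pins $|u|\approx 1$ and $j(u)\approx j(u^*_\xi)$ once the energy is within $\Sigma_a^r$ of optimal — and crucially the error has to be affine in $\Sigma_a^r$ with the stated $\eps\logeps^{C_1}e^{C_1\Sigma_a^r}$ form, so one cannot afford any step that squares the excess or loses a power of $\logeps$. Controlling the interaction term $\sum_{i\neq j}A_{\xi_j}(\xi_i)$ at the required precision, given only that the $\xi_i$ are known a priori to lie within $r_a$ of the $a_i$ at scale $\rho_a \sim 1/\logeps$, also demands the fine asymptotics of $A_a$ near its singularity (Appendix A), since a naive bound would be off by a full $\log$.
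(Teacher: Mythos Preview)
Your overall architecture is close to the paper's, but the logical order is inverted and this creates a genuine gap. In the paper, the sharp Jacobian localisation comes \emph{first}: after rough lower/upper energy bounds on balls $B(a_i,\ell_{\eps,a})$ (Steps 1--2), one applies the Jerrard--Spirn single-vortex localisation theorem (Theorem~\ref{thm:B.2}) on each ball to produce the points $\xi_i$ together with the bound $\|Ju-\pi\delta_{\xi_i}\|_{\dot W^{-1,1}(B(a_i,2\ell_{\eps,a}))}\leq \eps\logeps^C e^{C\Sigma_a^r}$ (Step 3). This $\eps$-scale pinning of the $\xi_i$ is then the input to everything that follows: the improved core lower bounds (Step 4), the pointwise energy decomposition \eqref{eq:funddecomp} with the cross-term handled by integrating by parts against $r\tilde\Psi^*_\xi$ (Step 5), and finally the away-from-core Jacobian control via Theorem~\ref{thm:B.1bis} on dyadic $r$-strips (Step 6), which is what forces the restriction to $\{r\gtrsim \Sigma_\xi/\logeps\}$. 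The global estimate \eqref{eq:dsd1} is obtained at the very end (Step 7) by patching the near-core bound from Step 3 with the far-from-core bound from Step 6 via a partition of unity.

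Your route instead tries to establish \eqref{eq:dsd2} first via a generic Jerrard--Sandier ball construction, and then derive \eqref{eq:dsd1} from it by a ``duality/elliptic estimate'' on $\mathrm{curl}(j(u)-j(u_\xi^*))$. The difficulty is that \eqref{eq:dsd2} only controls the current \emph{outside} $\cup_i B(\xi_i,\eps^{2/3})$, whereas the Jacobian mass that \eqref{eq:dsd1} must locate to precision $\eps\logeps^{C_1}e^{C_1\Sigma_a^r}\ll \eps^{2/3}$ lives precisely inside those balls; an $L^2$ bound on the current in the exterior, however sharp, says nothing about where $Ju$ sits within each core. To resolve this you would be forced to invoke something like Theorem~\ref{thm:B.2} on each $B(\xi_i,\eps^{2/3})$ anyway, at which point you might as well have used it at the outset (as the paper does). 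A secondary issue is that a standard vortex-ball construction does not hand you a single point $\xi_i$ at $\eps$-precision, only a covering by balls of total radius $o(1)$; it is exactly Theorem~\ref{thm:B.2} that converts an energy upper bound of the form $\pi\log(r/\eps)+\Xi$ on a ball into a point $\xi$ with $\|Ju-\pi\delta_\xi\|\lesssim \eps e^{\Xi/\pi}$, and this is the missing ingredient in your sketch.
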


\begin{rem}\label{rem:crucial}
    It is tempting to simplify somewhat the statement of Proposition \ref{prop:stronglocal} by replacing
    the term $\Sigma_\xi$ in the right-hand side of \eqref{eq:dsd2} by $\Sigma_a^r$ (in view of \eqref{eq:sigmaxicontrolbad} 
    this would be correct up to a possible change of $C_1$), and hence obtain error bounds that only depend
    on the input data. Yet, it turns out that \eqref{eq:sigmaxicontrolbad} is not optimal in all cases and 
    the key step of our subsequent analysis will make use of that difference. 
\end{rem}

\medskip

We will now focus on estimates that are valid up to and including the cores. 

By definition (see Appendix \ref{appendice:canon}), we have 
$$
r j(u_\xi^*) = -\nabla^\perp\big(r\Psi^*_\xi\big).
$$
Since the latter is singular at the points $a_i$ and not in $L^2_{\rm loc}$, there is no
hope that estimate \eqref{eq:dsd2} in Proposition \ref{prop:stronglocal} could
be extended to the whole of $\H.$ For that purpose, we have to replace $j(u_\xi^*)$
by some mollified version. The function $j(u^*_{\xi,\eps})$ would be a natural candidate,
but that would require that the vortex locations $\xi_i$ are known to a precision at
least as good as $\eps,$ which is not the case in view of \eqref{eq:dsd1}. For that reason,
instead we modify the function 
$\Psi_\xi^*$ to a function $\Psi_\xi^\natural$ 
in the following way (truncate $r\Psi_\xi^*$): 

We write  
\begin{equation}\label{eq:defreps}
    r_\xi := C_1 \eps \logeps^{C_1}e^{C_1\Sigma_a^r}
\end{equation}
and 
for each $i=1,\cdots,n$ we consider the connected 
component $\mathcal{C}_i$ of the superlevel set $\{ r\Psi_\xi^* \geq 
r\Psi^*_\xi(\xi_i+(r_\xi,0))\}$ (by convention we include $\xi_i$, where $\Psi^*_\xi$ is 
in principle not defined, in this set) 
which contains the point $\xi_i+(r_\xi,0)$, and we 
set $r\Psi^\natural_\xi = r\Psi^*_\xi(\xi_i+(r_\xi,0))$ inside $\mathcal{C}_i$. 
Next, we set $\Psi^\natural_\xi=\Psi^*_\xi$ on $\H 
\setminus \cup_{i=1}^n \mathcal{C}_i$ and finally we define
\begin{equation}\label{eq:defjnatural}
r j^\natural(u_\xi^*) = -\nabla^\perp\big(r\Psi^\natural_\xi\big).
\end{equation}
\begin{rem}\label{rem:coincide}
    Note that by construction $j^\natural(u_\xi^*)$ and $j(u_\xi^*)$ coincide everywhere
    outside $\cup_i \mathcal{C}_i$, that is everywhere except on a neighborhood of order 
    $r_\xi$ of the points $\xi_i,$ and that $j^\natural(u_\xi^*)\equiv 0$ inside each
    $\mathcal{C}_i.$ In the sense of distributions,
    \begin{equation}\label{eq:divjnatural}
    {\rm div}(rj^\natural(u_\xi^*)) = 0
    \end{equation}
    and
    \begin{equation}\label{eq:curljnatural}
        {\rm curl}(j^\natural(u_\xi^*)) =
        \sum_{i=1}^n|j(u_\xi^*)|d\mathcal{H}^1\rest_{\partial\mathcal{C}_i}.
    \end{equation}
\end{rem}

\begin{prop}\label{prop:closecore}
    In addition to the statements of Proposition \ref{prop:stronglocal}, there exists $\eps_2 \leq \eps_1$ 
    such that if $\eps \leq \eps_2$ then we have
\begin{equation}\label{eq:inthecore}
\int_{\H } \!\!  
r \left[ e_\eps(|u|) + 
\big|\frac{j(u)}{|u|}-j^\natural(u_\xi^*)\big|^2\right]  \leq 
C_2\big(\Sigma_a^r + \log\logeps\big),
\end{equation}
where $C_2$ depends only on $n$, $K_1$ and $r_0.$  
\end{prop}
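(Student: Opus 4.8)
The plan is to reduce \eqref{eq:inthecore} to the cores, dispose of the exterior by Proposition~\ref{prop:stronglocal}, and treat each core by combining the refined localisation of Proposition~\ref{prop:stronglocal} with weighted Ginzburg--Landau lower bounds, as in \cite{JeSm3}. Since $j^\natural(u_\xi^*)$ and $j(u_\xi^*)$ coincide outside $\cup_i\mathcal C_i$ (Remark~\ref{rem:coincide}), the left-hand side of \eqref{eq:inthecore} differs from that of \eqref{eq:dsd2} only on $\cup_iB(\xi_i,\eps^{2/3})$. Two bookkeeping facts are used repeatedly: first, by \eqref{eq:securitybounds} and \eqref{eq:boundinit} the radius $r_\xi=C_1\eps\logeps^{C_1}e^{C_1\Sigma_a^r}$ obeys $\eps\ll r_\xi\le\eps^{5/6}$ and $\log(r_\xi/\eps)=C_1\Sigma_a^r+C_1\log\logeps+O(1)$; second, near $\xi_i$ the function $r\Psi_\xi^*$ equals $-r(\xi_i)\log|\cdot-\xi_i|$ plus a term of oscillation $o(1)$ on $B(\xi_i,2r_\xi)$ (its gradient has at most a logarithmic singularity at $\xi_i$ and is $O(\logeps)$ away from it, while $r_\xi\le\eps^{5/6}$), and it takes much smaller values near the other $\xi_k$, so the superlevel component $\mathcal C_i$ is sandwiched: $B(\xi_i,r_\xi/2)\subset\mathcal C_i\subset B(\xi_i,2r_\xi)\subset B(\xi_i,\eps^{2/3})$, for $\eps$ small depending only on $n,K_1,r_0$. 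Hence the integrand of \eqref{eq:inthecore} equals that of \eqref{eq:dsd2} on $\H\setminus\cup_iB(\xi_i,\eps^{2/3})$, and combining \eqref{eq:dsd2} with \eqref{eq:sigmaxicontrolbad}, \eqref{eq:securitybounds} and $\Sigma_a^r\le\sigma_1\logeps$ gives $\Sigma_\xi\le C_1\Sigma_a^r+1$ and an exterior contribution $\le C(\Sigma_a^r+1)$.

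It then remains, for each fixed $i$, to bound $\int_{B(\xi_i,\eps^{2/3})}r[e_\eps(|u|)+|j(u)/|u|-j^\natural(u_\xi^*)|^2]$, and I would split $B(\xi_i,\eps^{2/3})$ along $\mathcal C_i$. On $\mathcal C_i$ one has $j^\natural(u_\xi^*)\equiv0$, so, using $re_\eps(u)=re_\eps(|u|)+\tfrac12r|j(u)|^2/|u|^2$, the integrand is $\le2re_\eps(u)$ and it suffices to bound $\int_{\mathcal C_i}re_\eps(u)\le\int_{B(\xi_i,2r_\xi)}re_\eps(u)$; on $B(\xi_i,\eps^{2/3})\setminus\mathcal C_i$ one has $j^\natural(u_\xi^*)\equiv j(u_\xi^*)$, and \eqref{eq:dsd2} must be pushed inward to scale $r_\xi$. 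Both follow from the same input: by \eqref{eq:dsd1} the Jacobian $Ju$ carries total mass $\pi$ (vorticity one) around $\xi_i$ at every scale $\ge r_\xi$, so the weighted ball-construction lower bound, in its refined (vortex-rigidity) form, gives for $\eps\le\rho_1<\rho_2\le\eps^{2/3}$
$$\int_{B(\xi_i,\rho_2)\setminus B(\xi_i,\rho_1)}\!\!re_\eps(u)\ \ge\ r(\xi_i)\,\pi\log\frac{\rho_2}{\rho_1}\,+\!\int_{B(\xi_i,\rho_2)\setminus B(\xi_i,\rho_1)}\!\!r\Big[e_\eps(|u|)+\tfrac12\big|\tfrac{j(u)}{|u|}-j(u_\xi^*)\big|^2\Big]-C,$$
while the global bound $\Ew(u)\le H_\eps(\xi)+\Sigma_\xi$, the same lower bounds applied to the other cores, the exterior estimate \eqref{eq:dsd2}, and Lemma~\ref{lem:energy} (with the Green-function computation underlying it) identifying $H_\eps(\xi)$ with the energy of $u^*_{\eps,\xi}$, together yield the matching upper bound $\int_{B(\xi_i,\eps^{2/3})}re_\eps(u)\le r(\xi_i)\pi\log(\eps^{-1/3})+O(\Sigma_a^r+\log\logeps)$. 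Subtracting the plain lower bound on $B(\xi_i,\rho_1)$ and the refined one on $B(\xi_i,\eps^{2/3})\setminus B(\xi_i,\rho_1)$ shows that the annular excess $\int_{B(\xi_i,\eps^{2/3})\setminus B(\xi_i,\rho_1)}r[e_\eps(|u|)+\tfrac12|j(u)/|u|-j(u_\xi^*)|^2]$ is $O(\Sigma_a^r+\log\logeps)$ for every such $\rho_1$; taking $\rho_1=r_\xi/2$ disposes of $B(\xi_i,\eps^{2/3})\setminus\mathcal C_i$, while taking $\rho_1=2r_\xi$ and keeping the energy itself gives $\int_{B(\xi_i,2r_\xi)}re_\eps(u)\le r(\xi_i)\pi\log(2r_\xi/\eps)+O(\Sigma_a^r+\log\logeps)$, which is $O(\Sigma_a^r+\log\logeps)$ by the formula for $\log(r_\xi/\eps)$. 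This last point is the origin of the $\log\logeps$ in \eqref{eq:inthecore}: it is the cost of having set $j^\natural(u_\xi^*)\equiv0$ on a ball of radius $r_\xi\gg\eps$ rather than $\eps$. Summing the exterior term and the $n$ core contributions gives \eqref{eq:inthecore}.

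The main obstacle is the upper bound $\int_{B(\xi_i,\eps^{2/3})}re_\eps(u)\le r(\xi_i)\pi\log(\eps^{-1/3})+O(\Sigma_a^r+\log\logeps)$, which hinges on a \emph{lower} bound for the exterior energy $\int_{\H\setminus\cup_jB(\xi_j,\eps^{2/3})}re_\eps(u)$ matching the reference value up to $O(\Sigma_a^r+\log\logeps)$. Writing $w:=j(u)/|u|-j(u_\xi^*)$, this reduces to controlling the cross term $\int_{\H\setminus\cup_jB(\xi_j,\eps^{2/3})}r\,j(u_\xi^*)\cdot w$, and a direct Cauchy--Schwarz is useless here because $\int r|j(u_\xi^*)|^2$ is of order $\logeps$ while only $\int r|w|^2=O(\Sigma_a^r+1)$ is available from \eqref{eq:dsd2}. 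One must instead integrate by parts against $r\Psi_\xi^*$, using ${\rm div}(rj(u_\xi^*))=0$ and a careful treatment of the resulting boundary and ${\rm curl}$ terms — via an averaged (Fubini) choice of the sphere radii and the scale-by-scale propagation of the $L^2$-closeness of $j(u)/|u|$ to $j(u_\xi^*)$, exploiting that on a single dyadic annulus $|j(u_\xi^*)|\sim\rho^{-1}$ carries bounded, not logarithmically large, $L^2$ weight so that the errors form a summable series rather than accumulating over the $O(\logeps)$ scales; this is precisely the machinery of \cite{JeSm3}. One also has to check that the factor $e^{C_1\Sigma_a^r}$ sitting in $r_\xi$ enters only through $\log(r_\xi/\eps)$, hence additively and absorbed into the $\Sigma_a^r$ term, and that all constants depend only on $n,K_1,r_0$; this is why all radii are kept in $[\eps,\eps^{2/3}]$, where the weight $r$ stays within a factor $2$ of $r(\xi_i)\in[r_0/2,2r_0]$ and the balls $B(\xi_i,\eps^{2/3})$ are pairwise disjoint and well inside $\H$.
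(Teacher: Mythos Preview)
Your approach is workable and gets the right answer, but it is more circuitous than the paper's, and you misidentify where the difficulty lies. Two points.

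First, the ``main obstacle'' you name --- the upper bound $\int_{B(\xi_i,\eps^{2/3})} r\,e_\eps(u)\le r(\xi_i)\pi\log(\eps^{-1/3})+O(\Sigma_a^r+\log\logeps)$ --- is not an obstacle at all: it is exactly \eqref{eq:upperboundlocal}, already obtained inside the proof of Proposition~\ref{prop:stronglocal} (equivalently \eqref{eq:patissier} after reweighting). So the exterior lower bound, the cross-term on $\H\setminus\cup_j B(\xi_j,\eps^{2/3})$, and the Fubini/dyadic machinery you sketch in your last paragraph are all unnecessary here; they were precisely the content of Steps~5--6 of that earlier proof.

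Second, and this is the real difference in route: rather than splitting $B(\xi_i,\eps^{2/3})$ along $\partial\mathcal C_i$ and then proving a refined annular lower bound with $j(u_\xi^*)$ at every scale in $[r_\xi,\eps^{2/3}]$ (which forces you into boundary terms on moving spheres and a Fubini averaging to pick good radii), the paper works on the whole ball with a smooth cut-off $\chi_i$ and decomposes $e_\eps(u)$ directly against $j^\natural$ as in \eqref{eq:funddecomp2}. The term $\int r\tfrac12|j^\natural|^2\chi_i$ is then computed explicitly (it equals $\pi r(\xi_i)\log(\eps^{2/3}/r_\xi)+O(1)$, which is the source of the $\log\logeps$), and the cross term $\int r\,j^\natural\cdot(\tfrac{j(u)}{|u|}-j^\natural)\chi_i$ is handled by a \emph{single} integration by parts against $r\Psi_\xi^\natural-\kappa$: the boundary terms vanish because of $\chi_i$, the $\nabla\chi_i$ terms live in the annulus $\eps^{2/3}\le|\cdot-\xi_i|\le 2\eps^{2/3}$ where \eqref{eq:dsd2} already controls everything, and the curl term is bounded by
\[
\|2Ju-{\rm curl}\,j^\natural\|_{\dot W^{-1,1}(B(\xi_i,2\eps^{2/3}))}\cdot\|(r\Psi_\xi^\natural-\kappa)\chi_i\|_{W^{1,\infty}}\ \le\ C\,r_\xi\cdot\frac{C}{r_\xi}\ =\ C,
\]
using \eqref{eq:dsd1} together with the fact that ${\rm curl}\,j^\natural$ is a measure of mass $2\pi$ supported on $\partial\mathcal C_i\subset B(\xi_i,Cr_\xi)$. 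This pairing of the $W^{-1,1}$ Jacobian estimate with the $W^{1,\infty}$ norm of the truncated stream function is the key shortcut your argument does not exploit; it replaces your annular rigidity bound, the Fubini choice of radii, and the scale-by-scale summation all at once.
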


The term $\log\logeps$ is not small and even diverging as $\eps\to 0,$ but since the main order 
for the energy in the core region is of size $\logeps$ that estimate will be sufficient for our 
needs. Away from the cores we will of course stick to estimate \eqref{eq:dsd2}.

\subsection{Time evolution of the Jacobian and conservation of momentum}\label{subsect:momentum}

For sufficiently regular solutions of \GP we have
\begin{equation}\begin{split}
    \partial_t (iv,\nabla v) &=(i\partial_t v,\nabla v) - (v,\nabla i \partial_t v)\\
                              &=(\frac{1}{r}{\rm div}(r\nabla v)+\frac{1}{\eps^2}v(1-|v|^2),\nabla v) -
    (v,\nabla(\frac{1}{r}{\rm div}(r\nabla v)+\frac{1}{\eps^2}v(1-|v|^2)))\\
                              &= \frac{2}{r}({\rm div}(r\nabla v),\nabla v) -
\nabla \left( \frac{1}{r}(v,{\rm div} (r\nabla v)) + \frac{1-|v|^4}{2\eps^2}\right).
\end{split}\end{equation}
Taking the {\it curl} of the previous identy and integrating against a test function 
$\varphi$ with bounded support and which vanishes at $r=0$ we obtain 
\begin{equation}\begin{split}\label{eq:fondam0}
    \frac{d}{dt}\int_\H  Jv\,  \varphi\, drdz 
    &= -\int_\H \eps_{ij} \frac{1}{r} (\partial_k(r\partial_k v),\partial_j v) \partial_i\varphi\\
    &= -\int_\H \eps_{ij} \frac{\partial_k r}{r} (\partial_j v  ,\partial_k v)\partial_i\varphi +\int_\H \eps_{ij}(\partial_jv,\partial_k v)\partial_{ik}\varphi\\
    & \qquad \ +\int_\H \eps_{ij}\partial_j(\frac{\sum_k|\partial_k v|^2}{2})\partial_i \varphi\\ 
    &= -\int_\H \eps_{ij} \frac{\partial_k r}{r} (\partial_j v  ,\partial_k v)\partial_i\varphi +\int_\H \eps_{ij}(\partial_jv,\partial_k v)\partial_{ik}\varphi\\
\end{split}\end{equation}
where we sum over repeated indices and since
$$
\int_\H \eps_{ij}\partial_j(\sum_k\frac{|\partial_k v|^2}{2})\partial_i \varphi= \int_\H \eps_{ij}(\sum_k\frac{|\partial_k v|^2}{2})\partial_{ij} \varphi = 0
$$
by anti-symmetry. In the sequel we will write
\begin{equation}\label{eq:defF}
\mathcal{F}(\nabla v, \varphi)
:= -\int_\H \eps_{ij} \frac{\partial_k r}{r} (\partial_j v  ,\partial_k v)\partial_i\varphi +\int_\H \eps_{ij}(\partial_jv,\partial_k v)\partial_{ik}\varphi,
\end{equation}
so that \eqref{eq:fondam0} is also rewritten as
\begin{equation}\label{eq:fondam1}
 \frac{d}{dt}\int_\H  Jv\,  \varphi\,  drdz = \mathcal{F}(\nabla v, \varphi),
\end{equation}
and is the equation from which the dynamical law for the vortex cores will be deduced.
For a real Lipschitz vector field $X=(X_r,X_z)$, we expand
\begin{equation}\label{eq:expandF}
    \mathcal{F}(X,\varphi)= \int_\H  -\frac{1}{r}X_rX_z\partial_r \varphi + \frac{1}{r} X_r^2 \partial_z \varphi 
                            + X_rX_z \partial_{rr}\varphi +X_z^2 \partial_{rz}\varphi 
                            -X_r^2\partial_{rz}\varphi - X_rX_z \partial_{zz}\varphi.
\end{equation}
Integrating by parts, we have
\begin{equation*}
    \begin{split}
        \int_\H X_rX_z\partial_{rr}\varphi &= \int_\H -\partial_r X_z X_r \partial_r \varphi - X_z\partial_r X_r \partial_r \varphi\\
                                           &=\int_\H (-\partial_zX_r - {\rm curl} X)X_r\partial_r \varphi + (\frac{1}{r}X_r+\partial_z X_z -\frac{1}{r}{\rm div}(rX))X_z\partial_r\varphi,\\
        \int_\H X_z^2\partial_{rz}\varphi &= \frac12 \int_\H X_z^2\partial_{rz}\varphi+\frac12 \int_\H X_z^2\partial_{rz}\varphi\\
                                          &= \int_\H -\frac12 \partial_z(X_z^2)\partial_r\varphi -\frac12 \partial_r(X_z^2)\partial_z \varphi,\\
        \int_\H -X_r^2 \partial_{rz}\varphi &= \int_\H \frac12 \partial_r(X_r^2)\partial_z\varphi + \frac12 \partial_z(X_r^2)\partial_r \varphi,
\end{split}
\end{equation*}
and
\begin{equation*}
    \int_\H -X_rX_z\partial_{zz}\varphi = \int_\H (\partial_rX_z-{\rm curl}X)X_z\partial_z \varphi + X_r(\frac{1}{r}{\rm div}(rX)-\frac{1}{r}X_r-\partial_rX_r)\partial_z\varphi,
\end{equation*}
so that after summation and simplification
\begin{equation}\label{eq:Fjoli}
    \mathcal{F}(X,\varphi)= \int_\H -({\rm curl}X)X\cdot \nabla \varphi +\frac{1}{r}{\rm div}(rX)X\times \nabla \varphi.
\end{equation}


Formally, the choice $\varphi = r^2$ in \eqref{eq:fondam1} leads 
to the conservation of the momentum along the z-axis 
$$
\frac{d}{dt}\int_\Omega  Jv\,  r^2 drdz = 0,
$$
but its justification would require additional arguments at infinity. In the next section we shall consider
a version of the momentum localized on some large but finite part of $\H.$

\subsection{Expansion of the main terms in the dynamics}

In this section we strengthen assumption $(H_1)$ into  
\begin{equation}\label{eq:H0}
 \begin{array}{c}
     \displaystyle \frac{K_0^{-1}}{\sqrt{\logeps}} \leq \min_{i\neq j} |a_{i}-a_{j}| 
     \leq \max_{i\neq j} |a_{i}-a_{j}| \leq \frac{K_0}{\sqrt{\logeps}}\\
\displaystyle \frac{r_0}{2} \leq \min_{i} r(a_{i})
\leq \max_{i} r(a_{i})\leq 2r_0\\
\end{array}\tag{${H_0}$}
\end{equation}
which is nothing but the time independent version of \eqref{eq:bonneechelle}, 
and we define $r_a$ and $\Sigma_a$ as in \eqref{eq:defra} and \eqref{eq:defSigma}. 
We shall also always implicitly assume that   
$$
\max_i |z(a_i)| \leq K_0.
$$
Since the problem is invariant under translation along the z-axis, and since we have
already assumed that all the points are close to each other (as expressed by the first
line in $(H_0)$), it is clear that this is not really an assumption but just a 
convenient way to avoid the necessity for various translations along the z-axis in some 
our subsequent claims.

Note that for sufficiently small $\eps$, and adapting the constant $K_0$ if necessary, the situation
described by $(H_0)$ indeed implies $(H_1)$, and therefore in the sequel we shall refer freely to 
the improved approximation points $\xi_i$ whose existence was established in Proposition \ref{prop:stronglocal}. 

Our analysis in the next sections will make rigorous the fact that the main contribution in the dynamical 
law for the vortex cores is obtained from \eqref{eq:fondam1}, with a suitable choice of test function $\varphi$,  
by replacing in the expression $\mathcal{F}(\nabla u,\varphi)$ the term $\nabla u$ by $j^\natural(u_\xi^*).$ 
Regarding $\varphi$, we assume that it satisfies  
\begin{itemize}\label{eq:assumevarphi}
    \item
        $\varphi$  is affine on each ball $B(\xi_i,\frac{1}{\logeps}),$
    \item
        $\varphi$ is compactly supported in the union of disjoint balls $\cup_i B(\xi_i,1/(2K_0\sqrt{\logeps})),$
    \item
        $|\nabla \varphi|\leq C$ and $|D^2\varphi| \leq CK_0\sqrt{\logeps}$, 
\end{itemize}
where $C$ is a universal constant for such a test function to exist. We will refer to the above 
requirement as condition $(H_\varphi).$ 


\begin{prop}\label{prop:maintermdynamics}
Under the assumptions $(H_0)$, \eqref{eq:defra}, \eqref{eq:boundinit} and $(H_\varphi)$, there exist $\eps_3 \leq \eps_2$ and $C_3$ depending
    only on $n$ and $K_0$ and $r_0$ such that if $\eps\leq \eps_3$ we have
$$
\left| \mathcal{F}(j^\natural(u_\xi^*),\varphi) - \sum_{i=1}^n 
\mathbb{J}\nabla_{a_i}H_\eps(\xi_1,\cdots,\xi_n)\cdot \nabla\varphi(\xi_i) \right| \leq C_3\left( 
    \Sigma_a^r + \log\logeps\right).
$$
\end{prop}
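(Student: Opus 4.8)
The plan is to compute $\mathcal{F}(j^\natural(u_\xi^*),\varphi)$ using the closed-form expression \eqref{eq:Fjoli}, exploiting the structure of $j^\natural(u_\xi^*)$ recalled in Remark \ref{rem:coincide}. First I would substitute $X = j^\natural(u_\xi^*)$ into \eqref{eq:Fjoli}: since ${\rm div}(rj^\natural(u_\xi^*))=0$ by \eqref{eq:divjnatural}, the second term drops and we are left with
\[
\mathcal{F}(j^\natural(u_\xi^*),\varphi) = -\int_\H ({\rm curl}\, j^\natural(u_\xi^*))\, j^\natural(u_\xi^*)\cdot\nabla\varphi,
\]
and by \eqref{eq:curljnatural} the curl is $\sum_i |j(u_\xi^*)|\, d\mathcal{H}^1\!\rest_{\partial\mathcal{C}_i}$, a measure concentrated on the small circles $\partial\mathcal{C}_i$ of size $r_\xi$ around the points $\xi_i$. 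So the integral reduces to a sum of line integrals over $\partial\mathcal{C}_i$ of the (bounded, tangential) field $j^\natural(u_\xi^*)$ weighted by $|j(u_\xi^*)|\,\nabla\varphi$. Since $\varphi$ is affine on $B(\xi_i,1/\logeps)\supset \mathcal{C}_i$ (as $r_\xi\ll 1/\logeps$ thanks to the first part of \eqref{eq:securitybounds}), we may replace $\nabla\varphi$ by the constant $\nabla\varphi(\xi_i)$ on each $\partial\mathcal{C}_i$; the residual error is controlled by $|D^2\varphi|\, r_\xi \lesssim \sqrt{\logeps}\cdot \eps\logeps^{C_1}e^{C_1\Sigma_a^r}$, which is absorbed into the stated error because of the security bound forcing this quantity to be at most $\eps^{-1/6}\cdot\eps\cdot\sqrt{\logeps}$, hence $o(1)$.

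The heart of the matter is then to evaluate, for each $i$, the quantity
\[
\int_{\partial\mathcal{C}_i} |j(u_\xi^*)|\, j^\natural(u_\xi^*)\cdot \nabla\varphi(\xi_i)\, d\mathcal{H}^1,
\]
and to show it equals $-\mathbb{J}\nabla_{a_i}H_\eps(\xi_1,\dots,\xi_n)\cdot\nabla\varphi(\xi_i)$ up to the claimed error. The strategy here is to split $u_\xi^* = u_{\xi_i}^* \cdot \prod_{k\neq i} u_{\xi_k}^*$, so that near $\xi_i$ the field $j(u_\xi^*) = j(u_{\xi_i}^*) + \sum_{k\neq i} j(u_{\xi_k}^*)$. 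The singular part $j(u_{\xi_i}^*)$ contributes the self-interaction term — this is where the $\pi\log(r(\xi_i)/\eps)$ and the universal constants in $H_\eps$ come from, via the same type of computation that underlies Lemma \ref{lem:energy} (essentially a residue/Pohozaev-type identity for the point vortex, giving the classical $\frac1{r}\nabla^\perp(\text{energy})$ drift). The regular parts $j(u_{\xi_k}^*)$ for $k\neq i$ are smooth near $\xi_i$, so we Taylor-expand them at $\xi_i$; the value $j(u_{\xi_k}^*)(\xi_i) = -\nabla^\perp(r A_{\xi_k})(\xi_i)/r$ produces exactly the $\pi\sum_{j\neq i}\nabla_{a_i}(r(a_i)A_{a_j}(a_i))$ piece of $\nabla_{a_i}H_\eps$, after one uses that $\oint_{\partial\mathcal{C}_i}|j(u_\xi^*)|\,d\mathcal{H}^1 \approx \oint |j(u_{\xi_i}^*)| = 2\pi$ (the quantization) to leading order, with corrections of size $r_\xi\log\frac1{r_\xi}$. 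I would organize this by writing $H_\eps$ explicitly in terms of the renormalized energy $A_{a_j}(a_i)$ and the self-energy, and matching term by term, much as in the two-dimensional analyses of \cite{BBH}.

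The main obstacle will be bookkeeping the error terms so that everything collapses into $C_3(\Sigma_a^r + \log\logeps)$ rather than something larger. Two sources need care: (i) the contour $\partial\mathcal{C}_i$ is not exactly a circle of radius $r_\xi$ — it is a superlevel set of $r\Psi^*_\xi$, which is perturbed by the other vortices and by the $r$-weight, so one must show it is a $C^1$-close perturbation of a circle and that the arc-length and the values of $|j(u_\xi^*)|$ on it are controlled; (ii) the self-interaction computation on $\partial\mathcal{C}_i$ gives $\pi\log(1/r_\xi)\cdot(\text{something})$, and since $r_\xi = C_1\eps\logeps^{C_1}e^{C_1\Sigma_a^r}$ we get $\log(1/r_\xi) \sim \logeps + \Sigma_a^r + \log\logeps$ — the $\logeps$ part must be exactly what appears inside $H_\eps$ (this is the point of truncating at scale $r_\xi$ rather than $\eps$, and of the $\log(r(\xi_i)/\eps)$ in $H_\eps$), the $\Sigma_a^r$ part is allowed, and the $\log\logeps$ part is the stated residual. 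Getting the constant in front of $\logeps$ to match exactly — i.e. verifying that the truncated-field energy at scale $r_\xi$ differs from $H_\eps$ only by $O(\log\logeps + \Sigma_a^r)$ and not by an uncontrolled multiple of $\logeps$ — is the delicate quantitative step, and it relies crucially on Proposition \ref{prop:closecore}'s estimate \eqref{eq:inthecore} to handle the genuine (non-reference) field $u$ in the core region rather than the idealized $u_\xi^*$. I would prove Proposition \ref{prop:maintermdynamics} purely for the reference field $j^\natural(u_\xi^*)$ first (which is what is stated), deferring the comparison with the true dynamics $\mathcal{F}(\nabla u,\varphi)$ to a later section.
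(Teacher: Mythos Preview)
Your approach is essentially the same as the paper's: reduce $\mathcal{F}(j^\natural,\varphi)$ to line integrals over $\partial\mathcal{C}_i$ via \eqref{eq:Fjoli}, \eqref{eq:divjnatural}, \eqref{eq:curljnatural}, use that $\varphi$ is exactly affine there so $\nabla\varphi\equiv\nabla\varphi(\xi_i)$, and expand $j(u_\xi^*)$ on $\partial\mathcal{C}_i$ into a leading rotational part $e_\theta/r_\xi$ that cancels by symmetry plus the $\mathbb{J}\nabla_{a_i}H_\eps$ term plus an $O(\Sigma_a^r+\log\logeps)$ remainder coming precisely from $\log(1/r_\xi)$ versus $\logeps$. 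One small clarification: Proposition~\ref{prop:closecore} plays no role here, since the statement concerns only the reference field $j^\natural(u_\xi^*)$ --- as you yourself note at the end --- so your earlier remark that the matching ``relies crucially on \eqref{eq:inthecore}'' is a red herring for this particular proposition.
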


The main task in the remaining sections will be to control the discrepandcy between $\mathcal{F}(\nabla u,\varphi)$
and $\mathcal{F}(j^\natural(u_\xi^*),\varphi)$; for that purpose we will have to use the evolution equation 
to a larger extent (up to now our analysis was constrained on fixed time slices).   

\subsection{Approximation of the momentum}

As remarked earlier, the choice $\varphi=r^2$ in \eqref{eq:fondam1} formally leads to the conservation of the 
momentum $\int Ju r^2 \, drdz.$ Yet, giving a clear meaning to the previous integral and proving its conservation
in time is presumably not an easy task. Instead, we will localise the function $r^2$ by cutting-it off 
sufficiently far away from the origin and derive an approximate conservation law. More precisely,
we set 
$$
R_\eps := \logeps^2
$$
and we let $0\leq \chi_\eps\ \leq 1$ be a smooth cut-off function with compact support in $[0,2R_\eps]\times[-2R_\eps,2R_\eps]$ 
and such that $\chi_\eps \equiv 1$ on $[0,R_\eps]\times[-R_\eps,R_\eps]$ and $|\nabla \chi_\eps| \leq C/R_\eps.$
In the sequel we write
\begin{equation}\label{eq:defPeps}
    P_\eps(u) := \int_\H Ju r^2 \chi_\eps \, drdz. 
\end{equation}

\begin{prop}\label{prop:momentumapprox}
    Under the assumption $(H_0)$ and \eqref{eq:boundinit}, there exist $\eps_4 \leq \eps_3$ such that
if $\eps \leq \eps_4$ then we have :
\begin{equation}\label{eq:approxJ}
    \left|P_\eps(u)  - P(\xi_1,\cdots,\xi_n)\right| \leq C_4 \frac{(1+\Sigma_\xi)^2}{\logeps^2},
\end{equation}
and
\begin{equation}\label{eq:approxddtJ}
    \left|  \partial_t P_\eps(u)\right| \leq C_4 \frac{1+\Sigma_\xi}{R_\eps} = C_4\frac{1+\Sigma_\xi}{\logeps^2}, 
\end{equation}
where $C_4$ depends only on $n$, $K_0$ and $r_0.$   
\end{prop}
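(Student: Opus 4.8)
The plan is to treat the two estimates in turn, the first being a purely static (fixed-time) comparison and the second a computation of the time derivative using the evolution equation \eqref{eq:fondam1}. For \eqref{eq:approxJ}, I would write
$$
P_\eps(u) - P(\xi_1,\dots,\xi_n) = \int_\H Ju\, r^2\chi_\eps\, drdz - \pi\sum_i r^2(\xi_i),
$$
and use that $\pi\sum_i r^2(\xi_i) = \pi\sum_i \langle \delta_{\xi_i}, r^2\rangle$ together with the strong localisation \eqref{eq:dsd1}: on the region $\{r \geq C_1(\Sigma_\xi+1)/\logeps\}$, which by \eqref{eq:securitybounds} contains the supports of all the $\delta_{\xi_i}$ and (for $\eps$ small, since $R_\eps = \logeps^2$ is large) the support of $\chi_\eps$ minus a neighbourhood of $\partial\H$, the measure $Ju - \pi\sum_i\delta_{\xi_i}$ has $\dot W^{-1,1}$ norm at most $r_\xi = C_1\eps\logeps^{C_1}e^{C_1\Sigma_a^r}$. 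Testing against $r^2\chi_\eps$, whose gradient is bounded by $CR_\eps = C\logeps^2$ on the relevant region, gives a contribution $O(\logeps^2 \cdot \eps\logeps^{C_1}e^{C_1\Sigma_a^r})$; the security bounds \eqref{eq:securitybounds} make this $\eps^{1/2}$-small, hence negligible against the claimed $(1+\Sigma_\xi)^2/\logeps^2$. The remaining discrepancy — the part of $Ju$ lying in the strip $\{r \leq C_1(\Sigma_\xi+1)/\logeps\}$ where \eqref{eq:dsd1} says nothing — must be controlled using the energy bound \eqref{eq:dsd2} (or \eqref{eq:inthecore}): there the factor $r^2 \leq C(\Sigma_\xi+1)^2/\logeps^2$ is small, and $\int r|Ju|$ over that strip is bounded via Cauchy–Schwarz by the energy there, which is $O(\Sigma_\xi + 1)$ by Proposition \ref{prop:closecore} restricted to the strip (using that $j^\natural(u_\xi^*)$ is essentially tangential and contributes a controlled amount near $r=0$). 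This is where the $(1+\Sigma_\xi)^2$ and the $\logeps^{-2}$ both come from.

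For \eqref{eq:approxddtJ}, I would apply \eqref{eq:fondam1} with $\varphi = r^2\chi_\eps$, giving $\partial_t P_\eps(u) = \mathcal{F}(\nabla u, r^2\chi_\eps)$. The point is that $\mathcal{F}(\nabla u, r^2)$ would vanish identically by the momentum-conservation computation, so only the cut-off error survives: writing $\varphi = r^2 - r^2(1-\chi_\eps)$ and using linearity of $\mathcal{F}$ in its second argument, $\mathcal{F}(\nabla u, r^2\chi_\eps) = -\mathcal{F}(\nabla u, r^2(1-\chi_\eps))$. Now $r^2(1-\chi_\eps)$ is supported in $\{r \geq R_\eps\}\cup\{|z|\geq R_\eps\}$, a region far from all the cores $\xi_i$ (which sit near $(r_0,z_0)$), so on its support the energy of $u$ is the ``excess'' energy only. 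Using the expression \eqref{eq:defF} for $\mathcal F$, each term is an integral of a quadratic expression in $\nabla u$ against first or second derivatives of $r^2(1-\chi_\eps)$; the first derivatives are $O(R_\eps)$ and the second derivatives $O(1)$ on the annular support where $\nabla\chi_\eps \neq 0$, while the undifferentiated $r^2$ times $\nabla\chi_\eps$ terms are $O(R_\eps^2 \cdot 1/R_\eps) = O(R_\eps)$ — wait, more carefully: $\partial_i(r^2(1-\chi_\eps))$ picks up a term $r^2\partial_i\chi_\eps = O(R_\eps^2/R_\eps) = O(R_\eps)$, and a term $2r\delta_{i,r}(1-\chi_\eps)$, but the latter pairs with the structure of $\mathcal F(\nabla u, r^2)$ which vanishes, so effectively everything is multiplied by $O(R_\eps)$ and integrated against $|\nabla u|^2$ over $\{r \gtrsim R_\eps\}$. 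The integral $\int_{\{r\gtrsim R_\eps\}} |\nabla u|^2\, drdz$ must be bounded by $\frac{1}{R_\eps}\int r|\nabla u|^2\, drdz$ type estimates combined with finiteness of the weighted energy away from cores; the cleanest route is to note $\int_{\{r\geq R_\eps\}} r e_\eps(u)\, drdz \leq \Ew(u) - (\text{core energy}) = O(\Sigma_\xi + 1)$ and then $|\nabla u|^2 \leq r e_\eps(u)/R_\eps \cdot (2)$ on that region, so the whole thing is $O(R_\eps) \cdot O((1+\Sigma_\xi)/R_\eps^2) \cdot R_\eps = $ — I need to recount the $R_\eps$ powers, but the target $(1+\Sigma_\xi)/R_\eps$ should come out once the algebra of ``one power of $R_\eps$ from differentiating, one inverse power from the energy weight $r \geq R_\eps$, and the residual $1/R_\eps$'' is done correctly.

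The main obstacle, I expect, is the first estimate's treatment of the near-axis strip $\{r \lesssim (\Sigma_\xi+1)/\logeps\}$: the strong localisation \eqref{eq:dsd1} is useless there, and one must argue that the contribution of $Ju$ with the weight $r^2$ is genuinely of order $(1+\Sigma_\xi)^2/\logeps^2$ and not larger. This requires combining the energy estimate of Proposition \ref{prop:closecore} — which controls $\int_\H r(e_\eps(|u|) + |j(u)/|u| - j^\natural(u_\xi^*)|^2)$ up to $C_2(\Sigma_a^r + \log\logeps)$ — with the fact that $j^\natural(u_\xi^*)$ itself, being derived from the truncated stream function $r\Psi_\xi^\natural$, carries only $O((1+\Sigma_\xi)/\logeps)$ of $\int_{\{r \text{ small}\}} r|j^\natural|^2$ and produces a Jacobian (as in \eqref{eq:curljnatural}) concentrated on the small circles $\partial\mathcal C_i$ which are at distance $\gtrsim r_0$ from the axis. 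Quantifying ``$Ju$ restricted to the thin near-axis strip is small in the weighted sense'' without circular reasoning is the delicate point; everything else is bookkeeping with the already-established Propositions \ref{prop:stronglocal}–\ref{prop:closecore} and the algebraic identity \eqref{eq:Fjoli} for $\mathcal F$.
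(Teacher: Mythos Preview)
Your approach to \eqref{eq:approxddtJ} is essentially the paper's: the integrand of $\mathcal F(X, r^2)$ vanishes pointwise (as one checks directly from \eqref{eq:expandF}), so every surviving term in $\mathcal F(\nabla u, r^2\chi_\eps)$ carries at least one derivative of $\chi_\eps$ and is bounded pointwise by $C\, r|\nabla\chi_\eps|\,e_\eps(u)$; integrating and invoking \eqref{eq:dsd2} on the support of $\nabla\chi_\eps$ (which is far from all the cores) gives the claim. Your $R_\eps$-counting is muddled, but the mechanism is right.

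For \eqref{eq:approxJ}, however, there is a genuine gap in the near-axis estimate. Your proposed route --- bound $r^2 \leq C(1+\Sigma_\xi)^2/\logeps^2$ pointwise on the strip and $\int_{\{r\leq\tau\}} r|Ju| \leq \int_{\{r\leq\tau\}} r\,e_\eps(u) = O(1+\Sigma_\xi)$ from the energy --- can only be combined as
\[
\int_{\{r\leq\tau\}} r^2|Ju| \ \leq\ \Big(\sup_{\{r\leq\tau\}} r\Big)\int_{\{r\leq\tau\}} r|Ju| \ \leq\ \frac{C(1+\Sigma_\xi)}{\logeps}\cdot C(1+\Sigma_\xi) \ =\ \frac{C(1+\Sigma_\xi)^2}{\logeps},
\]
which is one power of $\logeps$ short of the target. (The alternative, pulling out both factors of $r$, requires the \emph{unweighted} $\int|Ju|$, which the weighted energy does not control near the axis.) This missing factor is not cosmetic: it is exactly what is needed in the proof of Proposition~\ref{cor:apprixgood} to absorb the term $\tfrac{\logeps}{2r_0}\Delta P$ into the left-hand side, and without it that argument does not close.

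The paper recovers the extra $1/\logeps$ by a different device: it passes back to Cartesian coordinates on the 3D cylinder $\{r\leq 2\tau,\ |z|\leq 2R_\eps\}$ and applies the higher-dimensional version of the Jerrard--Soner Jacobian estimate (Theorem~\ref{thm:B.1ter}, cf.\ \cite{JeSo}). That result bounds $\int \varphi\,Ju$ not by $\|\varphi\|_\infty\cdot\int e_\eps(u)$ but by $\pi d_\lambda\|\varphi\|_\infty$ plus a small remainder, with $d_\lambda = \lfloor \tfrac{\lambda}{\pi}\,\E(u,\mathrm{spt}(\varphi))/\logeps\rfloor$; the division by $\logeps$ built into $d_\lambda$ is precisely the gain. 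Translated back to cylindrical coordinates with the test function $r\chi_\eps\Psi_1$ (whose sup norm is $O(\tau)$), this yields
\[
\Big|\int Ju\, r^2\chi_\eps\Psi_1\Big| \ \leq\ C\,\frac{\Ew(u,\{r\leq 2\tau\})}{\logeps}\cdot\tau + (\text{negligible}) \ \leq\ C\,\frac{(1+\Sigma_\xi)^2}{\logeps^2}.
\]
So the ``delicate point'' you correctly flag at the end is indeed the crux, and its resolution requires this quantization-type input for the Jacobian, not merely the energy bounds of Propositions~\ref{prop:stronglocal}--\ref{prop:closecore}.
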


\subsection{A key argument}

Coming back to Remark \ref{rem:1} and Remark \ref{rem:crucial} we now state 

\begin{prop}\label{cor:apprixgood}
Under the assumptions $(H_0)$ and \eqref{eq:boundinit}, there exists $\eps_5
\leq \eps_4$ and $\sigma_5>0$,
depending only on $K_0$ and $n$, such that if $\eps \leq \eps_5$ and if
\begin{equation}\label{eq:sigma5}
 \Sigma_a + |H_\eps(a_1,\cdots,a_n)-H_\eps(\xi,\cdots,\xi_n)| \leq \sigma_5 \logeps,
 \end{equation}
then
\begin{equation}\label{eq:sigmaxicontrol}
    \Sigma_\xi \leq 2\Sigma_a + C_5\left[ r_a\sqrt{\logeps} +\frac{1}{\logeps}+ \logeps\,|P_\eps(u)-P(a_1,\cdots,a_n)|\right] 
\end{equation}
where $C_5$ depends only on $n$, $K_0$ and $r_0.$  
\end{prop}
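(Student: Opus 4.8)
The plan is to compare the excess energies $\Sigma_\xi$ and $\Sigma_a$ by playing the two conserved-type quantities — the hamiltonian $H_\eps$ and the (localized) momentum $P_\eps$ — against one another. The starting point is the elementary inequality
$$
\Sigma_\xi = \left[\Ew(u) - H_\eps(\xi_1,\cdots,\xi_n)\right]^+ \leq \Sigma_a + \left[H_\eps(a_1,\cdots,a_n) - H_\eps(\xi_1,\cdots,\xi_n)\right]^+,
$$
but this only reproduces the crude bound \eqref{eq:sigmaxicontrolbad}, which as Remark \ref{rem:crucial} stresses is not good enough. The gain must come from the fact that the points $\xi_i$ produced by Proposition \ref{prop:stronglocal} are not arbitrary: they are the approximate vortex locations of the \emph{same} field $u$, and hence the momentum $P_\eps(u)$ is simultaneously close to $P(\xi_1,\cdots,\xi_n)$ (by \eqref{eq:approxJ}) and — via the assumption on the right-hand side of \eqref{eq:sigmaxicontrol} — controllably close to $P(a_1,\cdots,a_n)$. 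The idea is therefore to estimate $H_\eps(a)-H_\eps(\xi)$ not directly, but along the constraint surface $\{P = P_\eps(u)\}$, where, as Remark \ref{rem:1} makes explicit, the relevant function is the combination $H_\eps - \tfrac{\logeps}{2r_0}P$, whose dependence on the fine positions is governed by the $\eps$-independent hamiltonian $W$ (rescaled by $\pi r_0$) rather than by the large factor $\sqrt{\logeps}$ sitting in $W_{\eps,r_0}$.

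Concretely, I would proceed as follows. First, write
$$
H_\eps(a) - H_\eps(\xi) = \left(H_\eps - \tfrac{\logeps}{2r_0}P\right)(a) - \left(H_\eps - \tfrac{\logeps}{2r_0}P\right)(\xi) + \tfrac{\logeps}{2r_0}\left(P(a) - P(\xi)\right).
$$
For the last term, combine \eqref{eq:approxJ} applied at $\xi$ with the hypothesis \eqref{eq:sigma5} (which, together with the crude bound \eqref{eq:sigmaxicontrolbad} controlling $\Sigma_\xi$ in terms of $\Sigma_a^r$, makes \eqref{eq:approxJ} usable) and with the quantity $|P_\eps(u)-P(a)|$ appearing explicitly in the target estimate; this produces the term $\logeps\,|P_\eps(u)-P(a)|$ plus a contribution of size $\logeps \cdot (1+\Sigma_\xi)^2/\logeps^2 = O((1+\Sigma_\xi)^2/\logeps)$, which after reabsorbing $\Sigma_\xi$ on the left (using \eqref{eq:sigmaxicontrolbad} to see it is at most $O(\sigma_1\logeps)$, hence the square is still manageably small relative to $\logeps$) contributes $O(1/\logeps) + $ a small multiple of $\Sigma_\xi$. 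For the first difference, use the expansion from Remark \ref{rem:1}: modulo $o(1)$, $\left(H_\eps - \tfrac{\logeps}{2r_0}P\right)(a) - \left(H_\eps - \tfrac{\logeps}{2r_0}P\right)(\xi) = \pi r_0\left(W(b) - W(\tilde b)\right)$, where $b,\tilde b$ are the rescaled versions of $a,\xi$. Since $W$ is smooth and $\eps$-independent on the compact region cut out by $(H_0)$, $|W(b)-W(\tilde b)| \lesssim |b-\tilde b| = \sqrt{\logeps}\,|a-\xi| \lesssim r_a\sqrt{\logeps}$, using that the displacement $|a_i-\xi_i|$ is controlled by the localization radius $r_a$ — this is where one invokes that $\xi$ is extracted from the same $Ju$ that is $r_a$-close to $\pi\sum\delta_{a_i}$, together with the strong localization \eqref{eq:dsd1} forcing $|a_i - \xi_i| \lesssim r_a + r_\xi$, and $r_\xi$ is negligible by \eqref{eq:securitybounds}.

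The main obstacle, and the point requiring care, is the bookkeeping of the factor-$2$ in $2\Sigma_a$: the naive chain above would give $\Sigma_\xi \leq \Sigma_a + (\text{errors})$, so the coefficient $2$ signals that somewhere one must \emph{absorb} a term proportional to $\Sigma_\xi$ (or to $\Sigma_a$) rather than bound it outright. The delicate step is therefore controlling the difference $H_\eps(a)-H_\eps(\xi)$ \emph{without} already knowing $\Sigma_\xi$ is small — one has to bootstrap, first using the a priori bound \eqref{eq:sigmaxicontrolbad} to place $\Sigma_\xi = O(\sigma_1\logeps)$ so that all the $(1+\Sigma_\xi)^2/\logeps^2$-type errors from Proposition \ref{prop:momentumapprox} are at most $O(\sigma_1 \cdot \sigma_1 \logeps)$, then choosing $\sigma_5$ (hence $\sigma_1$) small enough that, after moving the resulting $\tfrac{\logeps}{2r_0}\cdot O(\Sigma_\xi^2/\logeps^2)\cdot\logeps$-style terms to the left-hand side, the coefficient of $\Sigma_\xi$ there exceeds $1$, yielding the factor $2$ after division. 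Making the choices of $\eps_5$ and $\sigma_5$ consistent with all of $\eps_1,\ldots,\eps_4$ and with \eqref{eq:securitybounds} is then routine.
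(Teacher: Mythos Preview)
Your proposal is correct and follows essentially the same route as the paper: decompose $H_\eps(a)-H_\eps(\xi)$ as $\bigl(H_\eps-\tfrac{\logeps}{2r_0}P\bigr)(a)-\bigl(H_\eps-\tfrac{\logeps}{2r_0}P\bigr)(\xi)+\tfrac{\logeps}{2r_0}\bigl(P(a)-P(\xi)\bigr)$, control the first difference via Remark~\ref{rem:1} (Lipschitz bound on the $\eps$-independent $W$, costing $r_a\sqrt{\logeps}$), control $P(a)-P(\xi)$ by inserting $P_\eps(u)$ and using \eqref{eq:approxJ}, and finally choose $\sigma_5$ small to absorb the quadratic remainder. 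The only cosmetic difference is that the paper carries out the absorption on $\Delta H_\eps:=|H_\eps(a)-H_\eps(\xi)|$ directly (using the hypothesis \eqref{eq:sigma5} to bound $(1+\Sigma_a+\Delta H_\eps)^2/\logeps$ by $\tfrac{3C_4}{2r_0}\sigma_5(\Sigma_a+\Delta H_\eps)+O(\logeps^{-1})$ and then moving the $\Delta H_\eps$ term to the left), whereas you phrase it as absorbing a small multiple of $\Sigma_\xi$; since $\Sigma_\xi\le\Sigma_a+\Delta H_\eps$, the two are equivalent.
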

\begin{proof}
    For a quantity $f$ we temporarily write $\Delta f := |f(a_1,\cdots,a_n)-f(\xi_1,\cdots,\xi_n)|$ when the latter has a 
    well defined 
    meaning.  By the triangle inequality we have 
    $$
    \Delta H_\eps \leq \Delta \left( H_\eps-\frac{\logeps}{2r_0}P\right)+ \frac{\logeps}{2r_0} \Delta P,
    $$
    and also
    $$
    \Delta P \leq \left| P_\eps(u)-P(\xi_1,\cdots,\xi_n)\right| + \left|P_\eps(u)-P(a_1,\cdots,a_n)\right|.
    $$
    In view of the expansion in Remark \ref{rem:1} (the $o(1)$ holds in particular in $\mathcal{C}^1$ norm under
    assumption $(H_0)$), we have
    $$
    \Delta \left( H_\eps-\frac{\logeps}{2r_0}P\right) \leq
    C|(\xi_1-a_1,\cdots,\xi_n-a_n)|\sqrt{\logeps}
    $$
    and by \eqref{eq:defra} and \eqref{eq:dsd1}
    $$
    |(\xi_1-a_1,\cdots,\xi_n-a_n)|\leq C (r_a+ \eps\logeps^{C_1}e^{C_1\Sigma_a^r}). 
    $$
    By \eqref{eq:approxJ} we also have
    $$
    \frac{\logeps}{2r_0}\left| P_\eps(u)-P(\xi_1,\cdots,\xi_n)\right|\leq
    \frac{C_4}{2r_0}
    \frac{(1+\Sigma_\xi)^2}{\logeps} \leq  \frac{C_4}{2r_0} \frac{(1+\Sigma_a + \Delta
    H_\eps)^2}{\logeps}  
    $$
    and 
    $$
    \frac{C_4}{2r_0} \frac{(1+\Sigma_a + \Delta 
    H_\eps)^2}{\logeps} \leq     \frac{3C_4}{2r_0} \frac{1+\Sigma_a^2 +
        (\Delta
    H_\eps)^2}{\logeps} \leq \frac{3C_4}{2r_0}\left( \frac{1}{\logeps} +
    \sigma_5  (\Sigma_a+\Delta H_\eps)\right).
    $$
    By summation of all the inequalities gathered so far we obtain
    \begin{equation}\begin{split}
        \Delta H_\eps &\leq C\left((r_a+\eps\logeps^{C_1}e^{C_1\Sigma_a^r})\sqrt{\logeps} +
    \frac{1}{\logeps} + \Sigma_a + \logeps\,
    |P_\eps(u)-P(a_1,\cdots,a_n)|\right)\\
    & \qquad +  \frac{3C_4}{2r_0}\sigma_5 \Delta H_\eps.  
\end{split}\end{equation}
    We therefore choose $\sigma_5$ in such a way that $\frac{3C_4}{2r_0}\sigma_5 \leq
    \frac12$, and we may then absorb the last term of the previous
    inequality in its left-hand side. Combined with the fact that $\Sigma_\xi 
    \leq \Sigma_a+\Delta H_\eps$ the conclusion \eqref{eq:sigmaxicontrol} follows.  
\end{proof}

\begin{rem}
    The main gain in \eqref{eq:sigmaxicontrol} is related to the fact that in
    the right-hand side we have a term of the form $r_a\sqrt{\logeps}$ rather
    than (the easier) $r_a\logeps$ which would have followed from a crude
    gradient bound on $H_\eps.$ Note however that we have exploited here the 
    assumption $(H_0)$, that is the fact that all the cores are of order
    $\sqrt{\logeps}$ apart from each other, whereas Proposition
    \ref{prop:stronglocal} holds under the weaker assumption $(H_1)$.

The right-hand side of \eqref{eq:sigmaxicontrol} also contains a term 
involving $P_\eps(u)$ and $P(a_1,\cdots,a_n).$ When introducing time dependence
in the next sections, we will take advantage of the fact that $P$ is preserved
by the ODE flow $\lf_\eps$ and that $P_\eps$ is almost preserved by the PDE flow 
$\gp$, as already expressed in \eqref{eq:approxddtJ}.
\end{rem}


\subsection{Time dependence and Stopping time}

In this section we introduce time dependence and go back to the setting of Theorem \ref{thm:main}, 
that is we assume \eqref{eq:bonneechelle}  and \eqref{eq:excesspetit}.
For $s \in [0,S_0],$ we define the localization scales
\begin{equation}\label{eq:locals}
 r_a^s := \Big\|J u_\eps^s - \pi \sum_{i=1}^n 
 \delta_{a_{i,\eps}(s)}\Big\|_{\dot W^{-1,1}(\Omega_0)},
\end{equation}
and the excess energy 
\begin{equation}\label{eq:excs}
\Sigma^s := \left[\Ew(u_\eps^s) - 
H_{\eps}(a_{1,\eps}(s),\cdots,a_{n,\eps}(s))\right]^+
\end{equation}
where we recall that $\Omega_0= \{r\geq \frac{r_0}{4}\}$ and $u_\eps^s$ is the solution
of $\gp$ evaluated at time $t=s/\logeps.$

Since $\Ew$ is preserved by the flow of $\gp$ and since $H_\eps$ is preserved by $\lf_\eps,$ 
we have
\begin{equation}\label{eq:excesscons}
    \Sigma^s = \Sigma^0 \qquad \forall s \in [0,S_0].
\end{equation}
We introduce the stopping time
\begin{equation}\label{eq:stop}
    S_{\rm stop} := \inf\left\{S \in [0,S_0],\ r_a^s \leq \frac{\rho_{\rm min}}{8},  \quad \forall s \in [0,S]\right\},  
\end{equation}
where we have set, in view of \eqref{eq:bonneechelle},  
\begin{equation}\label{eq:defrhomin}
    \rho_{\rm min} := \frac{K_0^{-1}}{\sqrt{\logeps}}.
\end{equation}
By \eqref{eq:excesspetit} and continuity it is clear that $S_{\rm stop}>0,$ at least provided $\eps_0$ and $\sigma_0$
are chosen small enough. By construction, we also have
$$
r_a^s < \frac{\rho_{a_\eps(s)}}{4} \qquad \forall s \in [0,S_{\rm stop}],
$$
and likewise by \eqref{eq:excesscons}  
$$
\Sigma^s + r_a^s\logeps \leq \sigma_1 \logeps,
$$
where $\sigma_1$ is given in Proposition \ref{prop:stronglocal}.
Applying Proposition \ref{prop:stronglocal} for each $s \in [0,S_{\rm stop}]$, we get 
    functions $s\mapsto \xi_i(s) \equiv \xi_i^s$, $i=1,\cdots,n.$ By continuity of the flow map for $\gp$, 
and doubling $C_1$ if necessary, we may further assume that these maps are piecewise constant
and hence measurable on $[0,S_{\rm stop}].$ In the sequel, in view of \eqref{eq:dsd1}, we set
    \begin{equation}\label{eq:defrxis}
        r_\xi^s = C_1\eps\logeps^{C_1}e^{C_1(\Sigma^0+r_a^s\logeps)} \stackrel{\eqref{eq:securitybounds}}{\leq} \eps^\frac56, 
    \end{equation}
for each $s \in [0,S_{\rm stop}].$

\medskip
The following Proposition yields a first estimate on the time evolution of the vortex cores. At this stage it
does not contain any information about the actual motion law, but only a rough (but essential) Lipschitz bound.

\begin{prop}\label{prop:firstspeedbound}
    For sufficiently small values of $\sigma_0$ and $\eps_0,$ whose threshold may be chosen depending only on 
    $n$, $K_0$ and $r_0,$ the following holds:  
    There exist $C_6>0,$ also depending only on $n$, $K_0$ and $r_0,$ such that for all $s_1,s_2 \in [0,S_{\rm stop}]$ 
    such that $s_1 \leq s_2 \leq s_1 + {\logeps}^{-1}$ we have  
    \begin{eqnarray}\label{eq:speedbound}
        &\|Ju_\eps^{s_1} - Ju_\eps^{s_2}\|_{\dot W^{-1,1}(\Omega_0)} \leq C_6(|s_1-s_2|+r_\xi^{s_1}),\\
        &r_\xi^{s_2} \leq r_\xi^{s_1}\left(1+ C_6\left(|s_2-s_1|+\eps^\frac56\right)\logeps\right),\label{eq:rxiequiv}\\
        &\left\{ a_{i,\eps}(s_2),\xi_i(s_2)\right\} \subset B(a_{i,\eps}(s_1),\frac{\rho_{\rm min}}{4})\label{eq:dansboule}.
    \end{eqnarray}
    Moreover, if $r_a^{s_1} \leq \rho_{\rm min}/16,$ then $S_{\rm stop} \geq s_1+ (C_6\sqrt{\logeps})^{-1}.$
   \end{prop}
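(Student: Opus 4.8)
\emph{Plan.} The engine of the proof is the exact evolution law \eqref{eq:fondam1} for $\int_\H Ju_\eps^s\,\varphi\,drdz$, compared with the ODE flow $\lf_\eps$. Writing $u_\eps^s=u(\cdot,s/\logeps)$ for $u$ a solution of \GP, and setting $\psi_\varphi(s):=\int_\H Ju_\eps^s\,\varphi\,drdz-\pi\sum_{i}\varphi(a_{i,\eps}(s))$, identity \eqref{eq:fondam1} together with $\lf_\eps$ gives $\psi_\varphi'(s)=\logeps^{-1}\big[\mathcal F(\nabla u_\eps^s,\varphi)-\sum_i\mathbb J\nabla_{a_i}H_\eps(a_\eps(s))\cdot\nabla\varphi(a_{i,\eps}(s))\big]$. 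The heart of the matter is a differential inequality $|\psi_\varphi'(s)|\le C\big(r_a^s\sqrt{\logeps}+o(1)\big)$ on $[0,S_{\rm stop}]$, valid for test functions $\varphi$ satisfying condition $(H_\varphi)$ relative to the ODE points $a_{i,\eps}(s_1)$ (affine on $B(a_{i,\eps}(s_1),\rho_{\rm min}/4)$, supported in $\cup_iB(a_{i,\eps}(s_1),\rho_{\rm min}/2)$, with $|\nabla\varphi|\le C$ and $|D^2\varphi|\le CK_0\sqrt{\logeps}$; such $\varphi$ are affine on each $B(\xi_i(s),1/\logeps)$, so Proposition~\ref{prop:maintermdynamics} applies). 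The decisive feature is the factor $\sqrt{\logeps}$ in front of $r_a^s$, rather than $\logeps$: because all the estimates below are carried out on time intervals of length $(C_6\sqrt{\logeps})^{-1}$, a Gronwall argument produces an amplification factor $e^{C/C_6}$, which is made as close to $1$ as needed by enlarging $C_6$, and this is precisely the mechanism matching the $\logeps^{-1/2}$ time scale of leapfrogging.

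To obtain the differential inequality, insert between $\mathcal F(\nabla u_\eps^s,\varphi)$ and $\sum_i\mathbb J\nabla_{a_i}H_\eps(a_\eps(s))\cdot\nabla\varphi(a_{i,\eps}(s))$ the intermediate quantities $\mathcal F(j^\natural(u_\xi^*),\varphi)$ and $\sum_i\mathbb J\nabla_{a_i}H_\eps(\xi(s))\cdot\nabla\varphi(\xi_i(s))$, with $\xi(s)$ the improved points of Proposition~\ref{prop:stronglocal}. The middle passage is exactly Proposition~\ref{prop:maintermdynamics}, whose error $C_3(\Sigma_a^r+\log\logeps)$, divided by $\logeps$ and using $\Sigma_a^r=\Sigma^0+r_a^s\logeps$ with $\Sigma^0\le\sigma_0$ small, is $\le C(r_a^s+o(1))$. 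The last passage uses that $\varphi$ is affine near the cores (so $\nabla\varphi(\xi_i)=\nabla\varphi(a_i)$), the bound $|\xi_i(s)-a_{i,\eps}(s)|\le\tfrac1\pi(r_a^s+r_\xi^s)$ got by comparing \eqref{eq:defra} with \eqref{eq:dsd1}, and the crude estimate $|D^2H_\eps|\le Cr_0\logeps$; it contributes $\le Cr_0 r_a^s$. The first passage, $\mathcal F(\nabla u_\eps^s,\varphi)\to\mathcal F(j^\natural(u_\xi^*),\varphi)$, is where Proposition~\ref{prop:closecore} enters: one splits $\nabla u$ into its modulus and current parts and writes $j(u)/|u|=j^\natural(u_\xi^*)+(j(u)/|u|-j^\natural(u_\xi^*))$, so the difference is bilinear and is estimated by Cauchy--Schwarz against the weighted smallness $\int_\H r[e_\eps(|u|)+|j(u)/|u|-j^\natural(u_\xi^*)|^2]\le C_2(\Sigma_a^r+\log\logeps)$; since $\varphi$ is affine on $B(a_{i,\eps}(s_1),\rho_{\rm min}/4)$, the second-order terms in $\varphi$ are supported on the annuli $B(a_{i,\eps}(s_1),\rho_{\rm min}/2)\setminus B(a_{i,\eps}(s_1),\rho_{\rm min}/4)$, where $|j^\natural(u_\xi^*)|\le C/\rho_{\rm min}$ on a set of area $O(\rho_{\rm min}^2)$, so the dangerous weight $|D^2\varphi|\,|j^\natural(u_\xi^*)|^2$ has integral $O(1)$; accounting for the remaining contributions (including the logarithmically divergent but $O(\logeps)$ energy of $j^\natural(u_\xi^*)$ near the cores) yields, after division by $\logeps$, the bound $\le C(\sqrt{\logeps}\,r_a^s+o(1))$.

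Granting the differential inequality, the four conclusions follow. The gradient estimates on $H_\eps$ under $(H_0)$ give $|\dot a_{i,\eps}|\le C$, so $|a_{i,\eps}(s_2)-a_{i,\eps}(s_1)|\le C\logeps^{-1}\ll\rho_{\rm min}/4$; combined with $|\xi_i(s)-a_{i,\eps}(s)|\le\tfrac1\pi(r_a^s+r_\xi^s)$, $r_a^s\le\rho_{\rm min}/8$ and $r_\xi^s\le\eps^{5/6}$ this gives \eqref{eq:dansboule}. For the displacement of the $\xi_i$, apply the differential inequality to the $2n$ \emph{fixed} test functions $\varphi_{i,k}$ that near $a_{i,\eps}(s_1)$ coincide with $x\mapsto e_k\cdot(x-a_{i,\eps}(s_1))$ (an orthonormal pair $e_1,e_2$) and vanish near the other cores: since $\int Ju_\eps^s\varphi_{i,k}=\pi\,e_k\cdot(\xi_i(s)-a_{i,\eps}(s_1))+O(r_\xi^s)$ by \eqref{eq:dsd1} (and $\Omega_0\subset\{r\ge C_1(\Sigma_\xi+1)/\logeps\}$), while $|\psi'_{\varphi_{i,k}}|\le C$, integrating and comparing with the ODE identity gives $\sum_i|\xi_i(s_2)-\xi_i(s_1)|\le C(|s_2-s_1|+r_\xi^{s_1})$. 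Estimate \eqref{eq:speedbound} then comes from splitting a general $\varphi$ with $\|\nabla\varphi\|_\infty\le1$ as $\tilde\varphi+(\varphi-\tilde\varphi)$, $\tilde\varphi$ its $(H_\varphi)$-regularization at scale $\rho_{\rm min}$: the $\tilde\varphi$-part is handled by integrating $\psi'_{\tilde\varphi}$ and the $a_{i,\eps}$-motion, and $\varphi-\tilde\varphi$, supported near the cores, pairs with $Ju_\eps^{s_2}-Ju_\eps^{s_1}$ in size $O(\sum_i|\xi_i(s_2)-\xi_i(s_1)|+r_\xi^{s_1}+r_\xi^{s_2})$. Then \eqref{eq:rxiequiv} is immediate from \eqref{eq:speedbound} and $r_\xi^s=C_1\eps\logeps^{C_1}e^{C_1(\Sigma^0+r_a^s\logeps)}$, since $|r_a^{s_2}-r_a^{s_1}|\le\|Ju_\eps^{s_2}-Ju_\eps^{s_1}\|_{\dot W^{-1,1}(\Omega_0)}+\pi\sum_i|a_{i,\eps}(s_2)-a_{i,\eps}(s_1)|$ and $r_\xi^{s_1}\le\eps^{5/6}$. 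Finally, for the stopping-time bound, assume $r_a^{s_1}\le\rho_{\rm min}/16$ and suppose $S_{\rm stop}<s_1+(C_6\sqrt{\logeps})^{-1}$, so $r_a^{S_{\rm stop}}=\rho_{\rm min}/8$; setting $\Psi_i(s):=(\psi_{\varphi_{i,1}}(s)^2+\psi_{\varphi_{i,2}}(s)^2)^{1/2}$ for the fixed functions above, one has $r_a^s\le\sum_i\Psi_i(s)+Cr_\xi^s$ and $\Psi_i'(s)\le(\psi'_{\varphi_{i,1}}(s)^2+\psi'_{\varphi_{i,2}}(s)^2)^{1/2}\le C(r_a^s\sqrt{\logeps}+o(1))$, and Gronwall over a time $\le(C_6\sqrt{\logeps})^{-1}$ gives $\sum_i\Psi_i(s)\le e^{Cn/C_6}\sum_i\Psi_i(s_1)+o(\logeps^{-1/2})$ with $\sum_i\Psi_i(s_1)\le\pi\sum_i|\xi_i(s_1)-a_{i,\eps}(s_1)|+Cr_\xi^{s_1}\le r_a^{s_1}+Cr_\xi^{s_1}$; the use here of the $\ell^2$ rather than $\ell^1$ combination of the two directions is what keeps this last constant equal to $1$, and then choosing $C_6$ large and $\eps$ small yields $r_a^{S_{\rm stop}}<\rho_{\rm min}/8$, a contradiction.

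The single hard point is the first passage, $\mathcal F(\nabla u_\eps^s,\varphi)\to\mathcal F(j^\natural(u_\xi^*),\varphi)$: one must exploit the $L^2$-control of Proposition~\ref{prop:closecore} \emph{together with} the structure of $j^\natural(u_\xi^*)$ — divergence free by \eqref{eq:divjnatural}, vorticity concentrated on the curves $\partial\mathcal C_i$ by \eqref{eq:curljnatural}, singular rotational part contributing nothing to $\mathcal F$ — so that neither the logarithmically divergent energy near the cores nor the $\sqrt{\logeps}$-size of $D^2\varphi$ spoils the final bound $\sqrt{\logeps}\,r_a^s+o(1)$; a bound with $\logeps\,r_a^s$ in its place would be useless here.
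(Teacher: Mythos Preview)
Your approach is substantially more elaborate than what the proposition requires, and in particular your claimed differential inequality $|\psi'_\varphi(s)|\le C(r_a^s\sqrt{\logeps}+o(1))$ is neither needed nor justified at this stage.

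The paper's proof is far simpler. It never compares $\mathcal F(\nabla u,\varphi)$ with $\mathcal F(j^\natural,\varphi)$ and never invokes Propositions~\ref{prop:closecore} or~\ref{prop:maintermdynamics}. Instead it uses only the crude bound $|\mathcal F(\nabla u_\eps^s,\varphi)|\le C\logeps$: the first-order term in \eqref{eq:defF} is dominated by $C\|\nabla\varphi\|_\infty\,\Ew(u)\le C\logeps$, and the second-order term, supported where $D^2\varphi\ne0$ (hence away from the cores), is dominated via \eqref{eq:dsd2} by $C\|D^2\varphi\|_\infty\,\Sigma_a^r\le C\sqrt{\logeps}\cdot\sqrt{\logeps}=C\logeps$ on $[0,S_{\rm stop}]$. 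Dividing by $\logeps$ gives $|\langle Ju_\eps^{s_2}-Ju_\eps^{s_1},\varphi\rangle|\le C|s_2-s_1|$, hence the Lipschitz bound $r_a^{s_2}\le r_a^{s_1}+C|s_2-s_1|+C(r_\xi^{s_1}+r_\xi^{s_2})$. All four conclusions follow directly; in particular the stopping-time assertion is immediate since over a time interval of length $(C_6\sqrt{\logeps})^{-1}$ the Lipschitz bound gives growth $\le C/(C_6\sqrt{\logeps})=CK_0\rho_{\rm min}/C_6<\rho_{\rm min}/16$ for $C_6$ large. No Gronwall is needed.

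Your belief that ``a bound with $\logeps\,r_a^s$ in its place would be useless'' is therefore mistaken: the constant bound $|\psi'_\varphi|\le C$ (which is even stronger than $C\logeps\, r_a^s$ in the regime $r_a^s\le\rho_{\rm min}/8$) already suffices. The refined pointwise-in-time estimate you pursue in your ``first passage'' is essentially the content of the \emph{next} proposition (Proposition~\ref{prop:controldiscrep}), where the paper handles the cross-terms $T_3,T_4$ not by Cauchy--Schwarz but by time integration and the continuity equation \eqref{eq:continuity}; your Cauchy--Schwarz argument only yields $\sqrt{\Sigma_a^r/\logeps}\sim\sqrt{r_a^s}$, which can be massaged into your claimed form by Young's inequality, but you do not make that step explicit. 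In short: your route can be made to work, but you have inverted the logical order of the paper, importing the hard analysis of Proposition~\ref{prop:controldiscrep} into what is meant to be a soft preliminary Lipschitz estimate.
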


\subsection{Control of the discrepancy}

The following proposition is the final ingredient leading to the proof of Theorem \ref{thm:main}, it
can be regarded as a discrete version of the Gronwall inequality for the quantity $r_a^s.$ 

\begin{prop}\label{prop:controldiscrep}
Assume that $s< S_{\rm stop}$ and that $r_a^s \leq \rho_{\min}/16$ and set
$$S := s + \frac{(r_\xi^s)^2}{\eps}.$$
Then $S < S_{\rm stop}$ and
\begin{equation}\label{eq:discreteg}
\frac{r_a^S-r_a^s}{S-s} \leq C_0\left(r_a^s + \frac{\Sigma^0+r_a^0\logeps}{\sqrt{\logeps}} + 
\frac{C_\delta}{\logeps^{1-\delta}}\right),
\end{equation}
where $C_0$ depends only on $n$, $K_0$ and $r_0$, $\delta >0$ is arbitrary and $C_\delta$ 
depends only on $\delta.$ 
\end{prop}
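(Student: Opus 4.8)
The plan is to estimate the increment of $r_a^s$ over the short time window $[s,S]$ by testing the evolution equation \eqref{eq:fondam1} against a well-chosen family of test functions $\varphi$, and by carefully accounting for the discrepancy between the ``true'' dynamics governed by $\mathcal{F}(\nabla u,\varphi)$ and the ``reference'' dynamics governed by $\mathcal{F}(j^\natural(u_\xi^*),\varphi)$, which by Proposition \ref{prop:maintermdynamics} matches (up to the tolerable error $\Sigma_a^r + \log\logeps$, which after division by $\logeps$ and by $S-s$ becomes acceptable) the motion law $\sum_i \mathbb{J}\nabla_{a_i}H_\eps(\xi)\cdot\nabla\varphi(\xi_i)$ that defines $\lf_\eps$. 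First I would observe that $S-s = (r_\xi^s)^2/\eps \geq \eps^{2/3}$ by \eqref{eq:defrxis}, so Proposition \ref{prop:firstspeedbound} (in particular the last assertion, using $r_a^s \leq \rho_{\min}/16$) guarantees $S < S_{\rm stop}$, and moreover along $[s,S]$ the improved approximation points $\xi_i$ stay in the small balls $B(a_{i,\eps}(s),\rho_{\min}/4)$ and the scales $r_\xi^\sigma$ remain comparable to $r_\xi^s$. This lets me freeze the reference geometry on the whole window at a controlled cost.

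The core computation is a Gronwall-type comparison: write, for each $i$, the difference $a_{i,\eps}(S)-a_{i,\eps}(s)$ obtained by integrating $\lf_\eps$, versus the displacement of the ``observed'' vortex location extracted from $Ju_\eps^\sigma$ via the $\dot W^{-1,1}$ pairing against the affine-near-the-cores test functions of condition $(H_\varphi)$. Using \eqref{eq:fondam1}, integrating in time over $[s,S]$, and splitting
$$
\mathcal{F}(\nabla u_\eps^\sigma,\varphi) = \mathcal{F}(j^\natural(u_\xi^{\sigma,*}),\varphi) + \big[\mathcal{F}(\nabla u_\eps^\sigma,\varphi)-\mathcal{F}(j^\natural(u_\xi^{\sigma,*}),\varphi)\big],
$$
the first term is handled by Proposition \ref{prop:maintermdynamics} and reproduces exactly the $\lf_\eps$ velocity field evaluated at $\xi_i(\sigma)$; the difference between evaluating at $\xi_i(\sigma)$ and at $a_{i,\eps}(\sigma)$ is controlled by the $\mathcal{C}^1$ Lipschitz bound on $\nabla_{a_i}H_\eps$ times $|\xi_i-a_{i,\eps}|$, and here is precisely where Proposition \ref{cor:apprixgood} is essential: it upgrades the naive bound $|\xi-a| \lesssim r_a$ (which would only give $r_a\logeps$ after multiplying by the $O(\logeps)$ size of $\nabla H_\eps$) into the far better $r_a\sqrt{\logeps}$, using conservation of $\Sigma^s$ (via \eqref{eq:excesscons}), the momentum approximation Proposition \ref{prop:momentumapprox}, and the fact that $P$ is conserved by $\lf_\eps$ so that $\logeps\,|P_\eps(u_\eps^\sigma)-P(a_{\eps}(\sigma))| = \logeps\,|P_\eps(u_\eps^\sigma)-P(a_\eps^0)| \lesssim \logeps\,|P_\eps(u_\eps^0)-P_\eps(u_\eps^\sigma)| + \logeps\,|P_\eps(u_\eps^0)-P(a_\eps^0)|$, both of which are small by \eqref{eq:approxddtJ} integrated over a short window and by \eqref{eq:approxJ} at the initial time together with \eqref{eq:excesspetit}. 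Dividing everything by $S-s$ and by $\sqrt{\logeps}$ (which converts displacements in the $a_{i,\eps}$-variables into displacements in the rescaled $b_i$-variables and hence into the $\dot W^{-1,1}$ increment on $\Omega_0$) produces the three terms $r_a^s$, $(\Sigma^0+r_a^0\logeps)/\sqrt{\logeps}$, and $C_\delta/\logeps^{1-\delta}$ on the right of \eqref{eq:discreteg}.

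The main obstacle I expect is the estimation of the discrepancy term $\mathcal{F}(\nabla u_\eps^\sigma,\varphi)-\mathcal{F}(j^\natural(u_\xi^{\sigma,*}),\varphi)$, integrated over time. On a single time slice this difference is \emph{not} small — the energy estimate \eqref{eq:inthecore} only controls $\int r\,|\tfrac{j(u)}{|u|}-j^\natural(u_\xi^*)|^2$ by $\Sigma_a^r + \log\logeps$, which after pairing against $D^2\varphi = O(\sqrt{\logeps})$ in the quadratic expansion \eqref{eq:expandF} of $\mathcal{F}$ is of size $\sqrt{\logeps}(\Sigma_a^r+\log\logeps)$, far too large to survive division by $S-s \sim \eps^{2/3}$. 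The resolution must exploit time averaging: one uses the evolution equation again to show that the ``bad'' part of $j(u)$ (the part not captured by $j^\natural(u_\xi^*)$, which carries small but nonzero divergence $\tfrac{1}{r}\mathrm{div}(r\,\cdot)$) oscillates and averages to something much smaller over the window $[s,S]$, so that $\int_s^S [\mathcal{F}(\nabla u_\eps^\sigma,\varphi)-\mathcal{F}(j^\natural,\varphi)]\,d\sigma$ is of order $o((S-s)\sqrt{\logeps})$ rather than naively $(S-s)\cdot\sqrt{\logeps}\log\logeps$; this is the analogue of the ``small divergence when averaged in time'' phenomenon mentioned in the introduction, and tracking it with the $\logeps^{\delta}$ loss is what forces the final $C_\delta/\logeps^{1-\delta}$ term and the peculiar choice $S-s = (r_\xi^s)^2/\eps$, which balances the growth rate $r_\xi^\sigma \lesssim r_\xi^s e^{C\logeps(S-s)}$ from \eqref{eq:rxiequiv} against the $\eps$-scale of the core modification.
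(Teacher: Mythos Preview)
Your overall architecture matches the paper's: test against an affine-near-cores $\varphi$, split $\mathcal{F}(\nabla u,\varphi) = \mathcal{F}(j^\natural,\varphi) + [\text{discrepancy}]$, handle the first piece by Proposition~\ref{prop:maintermdynamics} and cancel it against the $\lf_\eps$ displacement of the $a_{i,\eps}$. But your analysis of the discrepancy contains two concrete gaps.

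First, you misread which terms are problematic and where Proposition~\ref{cor:apprixgood} enters. Writing $(\partial_j u,\partial_k u) = \partial_j|u|\,\partial_k|u| + (j^\natural+\eta)_j(j^\natural+\eta)_k$ with $\eta := j(u)/|u| - j^\natural$, the discrepancy splits into \emph{quadratic} pieces (in $\nabla|u|$ and in $\eta$) and \emph{cross} pieces (bilinear in $\eta$ and $j^\natural$). The quadratic pieces are controlled \emph{pointwise in time}: the part paired against $D^2\varphi$ lives on ${\rm spt}(D^2\varphi)$, which is away from the cores, so \eqref{eq:dsd2} (not \eqref{eq:inthecore}) applies and gives $\leq C\sqrt{\logeps}\,\Sigma_\xi$; after division by $\logeps$ this is $\Sigma_\xi/\sqrt{\logeps}$, and \emph{this} is where Proposition~\ref{cor:apprixgood} is invoked to bound $\Sigma_\xi$ by $2\Sigma_a + Cr_a\sqrt{\logeps} + C\logeps\,|P_\eps(u)-P(a)|$, producing the middle term of \eqref{eq:discreteg}. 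Your attribution of Proposition~\ref{cor:apprixgood} to the comparison $\nabla H_\eps(\xi)$ versus $\nabla H_\eps(a)$ is incorrect: that step uses only the crude Lipschitz bound $|\nabla_{a_i}H_\eps(\xi)-\nabla_{a_i}H_\eps(a)|\leq C\logeps\,r_a^\tau$, which after division by $\logeps$ already gives the $r_a^s$ term.

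Second, and this is the real missing idea, the cross terms are not handled by a vague oscillation argument. After freezing $j^\natural_\tau$ to $j^\natural_s$ (legitimate by \eqref{eq:speedbound} and your choice of window), one must estimate $\int_s^S\int_\H (\text{bilinear in } j(u)-j^\natural_\tau \text{ and } j^\natural_s)\cdot D\varphi$. The paper's device is a \emph{Helmholtz decomposition} $\chi\,(j(u)-j^\natural_\tau) = \nabla f^\tau + \tfrac{1}{r}\nabla^\perp g^\tau$ on a fixed domain $B\supset {\rm spt}(\varphi)$, followed by time integration $F=\int_s^S f^\tau$, $G=\int_s^S g^\tau$. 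Two \emph{different} mechanisms then act: (i) for $F$, the continuity equation $\partial_t|u|^2 = \tfrac{2}{r}{\rm div}(rj(u))$ converts $\int_s^S {\rm div}(rj(u))$ into the boundary term $\tfrac{\eps}{2\logeps}\big[\tfrac{|u|^2-1}{\eps}\big]_s^S$, which is $O(\eps/\sqrt{\logeps})$ in $L^2$ --- this is the averaging you correctly sensed, and the choice $S-s = (r_\xi^s)^2/\eps$ is what makes $\eps/(S-s)$ small; (ii) for $G$, the source is ${\rm curl}(j(u)-j^\natural) = 2Ju - {\rm curl}\,j^\natural$, which is small in $\dot W^{-1,1}$ by \eqref{eq:dsd1} but only $O(\logeps)$ in $L^1$. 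One interpolates to $W^{-1,p}$ for $p$ close to $1$, passes through elliptic estimates to $\|\nabla G\|_{L^p}$, and pairs against $\|j^\natural_s\|_{L^{p'}} \simeq (r_\xi^s)^{1-2/p}$; the interpolation loss here is the \emph{actual} origin of the $C_\delta\logeps^{-(1-\delta)}$ term, not the divergence averaging. Without this decomposition and the separate treatment of its two halves your plan does not close.
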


\begin{rem}
    The time step $S-s$ on which the differential inequality \eqref{eq:discreteg} holds is not arbitrary,
    in view of \eqref{eq:defrxis} it satisfies
    $$
    S-s = C_1^2 \eps \logeps^{2C_1}e^{2C_1(\Sigma^0+r_a^s\logeps)}
    $$
    which, for $\eps$ sufficiently small, is both large with respect to $\eps$ and small with respect to
    lower powers of $\eps.$ The fact that it is large with respect to $\eps$, as the proof of Proposition 
    \ref{prop:controldiscrep} will show, is essential in order to allow the averaging effects of the continuity
    equation (see \eqref{eq:continuity}) to act. On the other hand, the fact that it is small with respect
    to lower powers of $\eps$ will allow us, when using it iteratively, to rely on the softer estimates of Proposition \ref{prop:firstspeedbound}
    to bridge the gaps between the discrete set of times so obtained and the full time interval $[0,S_0]$ 
    which appears in the statement of Theorem \ref{thm:main}.
\end{rem}

\section{Proofs}

\noindent{\bf Proof of Lemma \ref{lem:energy}}
It suffices to combine the expansion of Lemma \eqref{lem:hors_disque0} with those (see e.g. \cite{BBH})
for the optimal Ginzburg-Landau profile $f_\eps.$\qed

\bigskip

\noindent{\bf Proof of Proposition \ref{prop:stronglocal}}
We divide the proof in several steps. 
We first set 
$$
\lea = 4\max(\frac{1}{\logeps},r_a).
$$

{\noindent \bf Step 1 : rough lower energy bounds on 
\mb$B(a_i,\lea)$.\mn}
In view of our assumptions and the fact that the $\dot W^{-1,1}$ is decreasing with respect
to the domain, we are in position, provided $\eps_1$ and $\sigma_1$ are sufficiently small (depending only
on $r_0$ and $K_1$), to apply Theorem \ref{thm:B.1} after translation to the balls $B(a_i,\lea).$ This yields the 
lower bounds
\begin{equation}\label{eq:lowerbloc}
    \begin{split}
        \E(u_\eps^s,B(a_i,\lea)) &\geq \pi \log\frac{\lea}{\eps} + \gamma 
    - \frac{C}{\lea} \left(\eps \sqrt{\log(\lea/\eps)} + r_a\right)\\
    & \geq \pi \logeps -C \left(r_a\logeps + \log\logeps\right),
\end{split}
\end{equation}
for any $1\leq i \leq n,$ where $C$ is universal provided we require that 
$\eps_1$  is also  
sufficiently small so that $\log\logeps\geq 1$ for $\eps\leq \eps_1.$ From 
\eqref{eq:lowerbloc} and the global energy bound given by the assumption of $\Sigma_a$ it 
follows, comparing the weight function $r$ with its value $r(a_i)$, that
\begin{equation}\label{eq:lowerblocw}
 \Ew(u,B(a_i,\lea)) \geq \pi r(a_i) 
\logeps  - C \left( r_a\logeps + \log\logeps 
\right)
\end{equation}
for any $1\leq i \leq n,$ and for a possibly larger constant  
$C$ depending only on $K_1$, $r_0$ and $n.$

{\noindent \bf Step 2 : rough upper energy bounds on \mb $\H\setminus 
\cup_{i=1}^n B(a_i,\lea/2)$ and  $B(a_i,2\lea)$.\mn}
The equivalent of \eqref{eq:lowerblocw} with $\lea$ replaced by 
$\lea/2$, combined with the global upper energy bound given by the 
definition of $\Sigma_a$, yields the upper bound
\begin{equation}\label{eq:upperout}
\Ew(u,\H\setminus \cup_{i=1}^n B(a_i,\frac{\lea}{2}) \leq C \left( 
 \Sigma_a^r + \log\logeps\right),
\end{equation}
where $C$ depends only $K_0$ and $n$. Also, combining \eqref{eq:lowerblocw} 
(for all 
but one $i$) with the definition of $\Sigma_a$, we obtain the upper bound
\begin{equation*}\label{eq:upperinw}
\Ew(u,B(a_i,2\lea)) \leq \pi r(a_i)\logeps + C \left( 
\Sigma_a^r + \log\logeps\right),
\end{equation*}
for any $1\leq i \leq n,$ and therefore
\begin{equation}\label{eq:upperin}
\E(u,B(a_i,2\lea)) \leq \pi 
\log\frac{2\lea}{\eps}  + C \left(  
\Sigma_a^r + \log\logeps\right),
\end{equation}
for any $1\leq i \leq n,$ where $C$ depends only on $K_0$ and $n.$ 

{\noindent \bf Step 3 : first localisation estimates.}
We apply Theorem \ref{thm:B.2}, after translation, to each of the balls 
$B(a_i,2\lea)$, and we denote by $\xi_i$ the corresponding points. In 
view of \eqref{eq:upperin}, this yields 
\begin{equation}\label{eq:concentrinmoyen}
\sum_{i=1}^n\|Ju-\pi\delta_{\xi_i}\|_{\dot W^{-1,1}(B(a_i,2\lea))} \leq  
\eps e^{C(\Sigma_a^r+\log\logeps)}\leq \eps\logeps^C e^{C\Sigma_a^r}.
\end{equation}
Note that from \eqref{eq:concentrinmoyen} and the definition of $r_a$ in \eqref{eq:defra}
we have the bound
\begin{equation}\label{eq:aprochexi}
    \max_{i=1,\cdots,n} | a_i-\xi_i| \leq \frac{1}{\pi}\left( r_a + \eps\logeps^C e^{C\Sigma_a^r}\right).
\end{equation}
Provided $\eps_1$ and $\sigma_1$ are sufficiently small, this also implies that
$$
B(\xi_i,\le) \subset B(a_i,\lea) \qquad \forall i=1\cdots,n,
$$
where we have set
$$
\le := \frac{1}{\logeps}.
$$
From now on we will rely entirely on the points $\xi_i$ rather than on the $a_i$ for our constructions.

{\noindent \bf Step 4 : improved lower energy bounds close to the cores.}
We apply Theorem \ref{thm:B.1}, after translation, to 
each of the balls $B(\xi_i,\rho),$ where $\le/2 \geq \rho 
\geq \eps^\frac45$ is some free parameter which we will fix later. Since $\dot 
W^{-1,1}$ norms are monotone functions of the domain and since  
$B(\xi_i,\rho)\subset B(a_i,\lea)$ by \eqref{eq:aprochexi}, in view of \eqref{eq:concentrinmoyen} 
we obtain
\begin{equation}\label{eq:lowerblocb}
 \E(u,B(\xi_{i},\rho)) \geq \pi \log\frac{\rho}{\eps} + \gamma 
- C\eps\logeps^Ce^{C\Sigma_a^r}\rho^{-1},
\end{equation}
and therefore
\begin{equation}\label{eq:lowerblocwb}
 \Ew(u,B(\xi_{i},\rho )) \geq r(\xi_{i})\Big(\pi 
\log\frac{\rho}{\eps} + \gamma\Big) - C 
(\eps\logeps^Ce^{C\Sigma_a^r}\rho^{-1}+\rho \logeps),
\end{equation}
for $i=1,\cdots,n.$

Note that taking $\rho=\le/2$ and then arguing exactly as in Step 2 yields 
the slight variant of \eqref{eq:upperout}:
\begin{equation}\label{eq:upperoutbis}
\Ew(u,\H\setminus \cup_{i=1}^n B(\xi_i,\frac{\le}{2}) \leq C \left( 
 \Sigma_a^r + \log\logeps\right).
\end{equation}
Yet at this point we wish to keep $\rho$ as a free parameter. 

{\noindent \bf Step 5 : towards lower energy bounds away from the cores.} 
In this step we compare $u$, away from 
the cores, with the singular vortex ring $u_\xi^*$. For 
convenience, we simply denote $j(u^*_\xi)$ by $j^*$, we
let $0 \leq \chi \leq 1$ be a lipschitz function on  
$\H$, and we set 
\begin{equation}\label{eq:defHomegaxi}
    \H_{\xi,\rho} :=
\H \setminus \cup_{i=1}^n B(\xi_i,\rho).
\end{equation}
\noindent
The starting point is the pointwise equality
\begin{equation}\label{eq:funddecomp}
 e_\eps(u) = \frac12 |j_*|^2 +j_*\big(\frac{j(u)}{|u|}-j_*\big) +  
e_\eps(|u|) + \frac12 \big| \frac{j(u)}{|u|}-j_*\big|^2,
\end{equation}
which holds almost everywhere in $\H.$ Notice that 
all the terms in the right-hand side of \eqref{eq:funddecomp} are pointwise 
non-negative except possibly the second one. We integrate \eqref{eq:funddecomp} 
multiplied by $\chi^2$ on $\H_{\xi,\rho}$ and estimate the 
corresponding terms. 

We first write
$$
\int_{\H_{\xi,\rho}}  r j_*\big(\frac{j(u)}{|u|}-j_*\big)\chi^2 = 
\int_{\H_{\xi,\rho}} r j_*\big(j(u)-j_*\big)\chi^2 + 
\int_{\H_{\xi,\rho}} r j_*\big(\frac{j(u)}{|u|}-j(u)\big)\chi^2
$$
and we readily estimate
$$
\Big| \int_{\H_{\xi,\rho}} r j_*\big(\frac{j(u)}{|u|}-j(u)\big)\chi^2 \Big| 
\leq 
\frac{C}{\rho} \Big( \int_{\H_{\xi,\rho}}  r 
\frac{j^2(u)}{|u|^2}\Big)^\frac12 
\Big( \int_{\H_{\xi,\rho}} r 
(1-|u|)^2\Big)^\frac12 \leq C\frac{\eps}{\rho}\logeps,
$$
where we have used the facts that $|j^*| \leq C/\rho$ on $\H_{\xi,\rho}$ and that the last two integral factors  
are dominated by (a constant multiple of) the weighted energy. 
By definition (see 
Appendix \ref{appendice:canon}), we have 
$$
r j_* = -\nabla^\perp\big(r\Psi^*_\xi\big).
$$
We modify (truncate) the function 
$\Psi_\xi^*$ to a function $\tilde \Psi_\xi^*$ 
in the following way : for each $i=1,\cdots,n$ we consider the connected 
component $\mathcal{C}_i$ of the superlevel set $\{ \Psi_\xi^* \geq 
\Psi^*_\xi(\xi_i+(\rho,0))\}$ (by convention we include $\xi_i$, where $\Psi^*_\xi$ is 
in principle not defined, in this set) 
which contains the point $\xi_i+(\rho,0)$, and we 
set $\tilde \Psi^*_\xi = \Psi^*_\xi(\xi_i+(\rho,0))$ on $\mathcal{C}_i$. 
Next, we set $\tilde \Psi^*_\xi=\Psi^*_\xi$ on $\H 
\setminus \cup_{i=1}^n \mathcal{C}_i.$ By construction,
$$
-\nabla^\perp\big(r\tilde \Psi^*_\xi\big)= 
rj_*\mathbbm{1}_{\H \setminus \cup_{i=1}^n 
\mathcal{C}_i}, 
$$
so that 
$$
-\nabla^\perp\big(r\tilde \Psi^*_\xi\chi^2\big)= 
rj_*\mathbbm{1}_{\H_{\xi,\rho}} \chi^2 + 
rj_*\big[\sum_{i=1}^n(\mathbbm{1}_{B(\xi_i,\rho)}-\mathbbm{1}_{\mathcal{C}_i}
)\big]\chi^2 - 2r 
\tilde \Psi_\xi^* \chi \nabla^\perp\chi.
$$
The latter and integration by parts yields 
\begin{equation}\label{eq:tirole}
\begin{split}
\Big|\int_{\H_{\xi,\rho}} r j_*\big(j(u)-j_*\big)\chi^2 \Big| &\leq  
\sum_{i=1}^n\int_{B(\xi_i,\rho)\bigtriangleup \mathcal{C}_i} 
r|j^*|\big|j(u)-j_*\big|
+ \int_\H 2r 
|\tilde \Psi_\xi^*| |\nabla^\perp\chi|\chi|j(u)-j_*|\\
& \quad + \Big|\int_{\H}2
r\tilde \Psi^*_\xi \big(J(u)-J(u^*_\xi)\big)\chi^2\Big| .
\end{split}
\end{equation}
In order to bound the right-hand side of \eqref{eq:tirole} we first remark
that, from \eqref{eq:Aa} and \eqref{eq:dAa} in the Appendix, for each 
$i=1,\cdots,n,$  we have
\begin{equation}\label{eq:bornesurf}
    d_{\mathcal{H}}( \mathcal{C}_i, B(\xi_i,\rho)) \leq  
\frac{\rho^2}{\rho_a}\log(\frac{\rho}{\rho_a}), \quad\text{and hence}\quad 
\mathcal{L}^2\big(\mathcal{C}_i \bigtriangleup B(\xi_i,\rho)\big) \leq  
C \frac{\rho^3}{\rho_a}\log(\frac{\rho}{\rho_a}).
\end{equation}
We write
$$
\int_{B(\xi_i,\rho)\bigtriangleup \mathcal{C}_i} 
r|j^*|\big|j(u)-j_*\big| \leq \int_{B(\xi_i,\rho)\bigtriangleup \mathcal{C}_i} 
r|j^*|\big|\frac{j(u)}{|u|}-j_*\big| + \int_{B(\xi_i,\rho)\bigtriangleup \mathcal{C}_i} 
r\eps|j^*|\big|\frac{j(u)}{|u|}\big|\frac{||u|-1|}{\eps},  
$$
and since $|j^*|\leq C/\rho$ on $\mathcal{C}_i \bigtriangleup 
B(\xi_i,\rho)$, we deduce from \eqref{eq:bornesurf}, the Cauchy-Schwarz inequality, and global energy upper 
bounds, that  
\begin{equation}\label{eq:conjg1}
\sum_{i=1}^n\int_{B(\xi_i,\rho)\bigtriangleup \mathcal{C}_i} 
r|j^*|\big|j(u)-j_*\big| \leq 
C \left( \frac{\rho}{\rho_a}\log(\frac{\rho}{\rho_a})\logeps \right)^\frac12 + C \frac{\eps}{\rho}\logeps.
\end{equation}
Concerning the second error term in \eqref{eq:tirole}, we first decompose it as
\begin{equation}\label{eq:conjg2}
\int_\H r 
|\tilde \Psi_\xi^*| |\nabla^\perp\chi| |j(u)-j_*| \chi=\int_\H r 
|\tilde \Psi_\xi^*| |\nabla^\perp\chi| |\frac{j(u)}{|u|}-j_*|\chi +\int_\H r 
|\tilde \Psi_\xi^*| |\nabla^\perp\chi| |\frac{j(u)}{|u|}||1-|u||\chi 
\end{equation}
and we write by Cauchy-Schwarz inequality on one hand
\begin{equation}\label{eq:conjg3}
\int_\H r 
|\tilde \Psi_\xi^*| |\nabla^\perp\chi||\frac{j(u)}{|u|}-j_*|\chi  \leq C\|\nabla \chi\|_\infty 
\Big(\int_{{\rm spt}(\nabla \chi)} r |\tilde \Psi^*_\xi|^2\Big)^\frac12 
\Big(\int_{{\rm spt}(\nabla \chi)} r (e_\eps(u)+e_\eps(u^*_\xi))\chi^2 \Big)^\frac12,
\end{equation}
and by direct comparison with the energy density on the other hand 
\begin{equation}\label{eq:conjg4}
\int_\H r 
|\tilde \Psi_\xi^*| |\nabla^\perp\chi| |\frac{j(u)}{|u|}||1-|u||\chi \leq C \eps \logeps^2  \|\nabla \chi\|_\infty. 
\end{equation}
Coming back to \eqref{eq:funddecomp}, and taking into account \eqref{eq:conjg1}-\eqref{eq:conjg4}, we conclude that
\begin{equation}\label{eq:premestim}
 \int_{\H_{\xi,\rho}} \!\!\!\!\!r e_\eps(u) \chi^2  \geq 
 \int_{\H_{\xi,\rho}}\!\!\!\!\!  r \left[ \frac{|j_*|^2}{2} + e_\eps(|u|) + 
\big|\frac{j(u)}{|u|}-j_*\big|^2\right] \chi^2 - \left|\int_{\H}2
r\tilde \Psi^*_\xi \big(J(u)-J(u^*_\xi)\big)\chi^2\right|- {\rm Err}(\chi^2),
\end{equation}
where
\begin{equation}\begin{split}\label{eq:Errexpand}
    {\rm Err}(\chi^2) \leq& 
    C \Big[ \Big( \frac{\rho}{\rho_a}\log(\frac{\rho}{\rho_a})\logeps \Big)^\frac12 + \frac{\eps}{\rho}\logeps \\
&+ \|\nabla \chi\|_\infty 
\Big(\int_{{\rm spt}(\nabla \chi)} r |\tilde \Psi^*_\xi|^2\Big)^\frac12 
\Big(\int_{{\rm spt}(\nabla \chi)} r (e_\eps(u)+e_\eps(u^*_\xi))\chi^2 \Big)^\frac12\\
&+  \|\nabla \chi\|_\infty\eps \logeps^2 \Big].
\end{split}\end{equation}

{\noindent \bf  Step 6 : improved lower energy bounds away from the 
cores.}
The right-hand side of estimate \eqref{eq:premestim} contains quantities which we do not yet control:    
 we need good localisation estimates for the jacobian $Ju$, also outside the cores, and we also have to get
 rid of the energy term due to the cut-off in \eqref{eq:Errexpand}. 
To deal with the localisation, we shall rely on Theorem \ref{thm:B.1bis}, but in view of 
the difference between $\E$ and $\Ew$ (the factor $r$), we only expect good 
localisation estimates when $r$ is not too small. To quantify this, we define 
the set
$$
\mathcal{S}= \left\{ s=2^{-k}, k \in \Z, \text{ s.t. } \Ew\Big(u,\{s\leq r \leq 2s\} 
\setminus \cup_{i=1}^nB(\xi_i,\frac{\le}{2})\Big) \leq \frac{\pi}{12} 
s\logeps \right\} 
$$
and the value  
$$
r_\mathcal{S} := \min \left\{s=2^{-k}, k \in \mathbb{Z}, \text{ s.t. } 2^{-\ell} 
    \in \mathcal{S} \ \forall \ell \leq k
    \right\}.
$$
Note that by \eqref{eq:upperoutbis} we have
\begin{equation}\label{eq:sauveur}
    r_\mathcal{S} \leq C \left( \frac{\Sigma_a^r}{\logeps} + 
    \frac{\log\logeps}{\logeps}\right),
\end{equation}
which we will improve later on in \eqref{eq:bieneu}.
Also, whenever $\Omega$ is an open bounded subset contained in $\{s\leq r \leq 
2s\} \setminus \cup_{i=1}^nB(\xi_i,\frac{\le}{2})$ for some $s\geq 
r_\mathcal{S}$,  covering it with two of the above slices we obtain
$$
\E(u,\Omega)\leq \frac{1}{s}\Ew(u,\Omega) \leq \frac{\pi}{4} \logeps
$$
and therefore by Theorem \ref{thm:B.1bis}
\begin{equation}\label{eq:concentroutmoyen}
 \|Ju\|_{\dot W^{-1,1}(\Omega)} \leq  
C\E(u,\Omega)\eps^\frac34.
\end{equation}

We now take $\rho:=\re=\eps^\frac23$, and in view of \eqref{eq:sauveur} we let 
$ r_\mathcal{S} \leq \tilde r \leq C \left( \Sigma_a^r + 
    \log\logeps\right)/\logeps.$  We choose $0\leq \chi \leq 1$ a 
lipschitz function supported in $\{r\geq \tilde r\}$ and such that $\chi \equiv 1$ on 
$\{r\geq 2\tilde r\}$ and $|\nabla \chi| \leq C/\tilde r.$ We then invoke  
estimate \eqref{eq:premestim} of Step 5, which we add-up with estimate 
\eqref{eq:lowerblocwb} (note that $\chi\equiv 1$ on each $B(\xi_i,\rho)$ by definition of $\tilde r$, at 
least provided $\eps_1$ and $\sigma_1$ are chosen small enough) to write
\begin{equation}\label{eq:norway}
\Ew(u,\{r\geq \tilde r\}) \geq \int_{\H} re_\eps(u)\chi^2 \geq  T_1-T_2-T_3+T_4
\end{equation}
where
\begin{equation*}
T_1 =  \int_{\H_{\xi,\rho}}\!\!\!\!\!  r  
\frac{|j_*|^2}{2} + \sum_{i=1}^n r(\xi_{i})\Big(\pi \log\frac{\re}{\eps} + \gamma\Big) 
- 
\int_{\{r\leq \tilde r\}} r  
\frac{|j_*|^2}{2}, 
\end{equation*}
$$
T_2 =  {\rm Err}(\chi^2) \qquad \text{,}\qquad T_3:= \Big|\int_{\H}2
r\tilde \Psi^*_\xi \big(J(u)-J(u^*_\xi)\big)\chi^2\Big|,
$$
and
$$
T_4 =  \int_{\H_{\xi,\rho}}\!\!\!\!\!  r \left[  e_\eps(|u|) + 
\big|\frac{j(u)}{|u|}-j_*\big|^2\right] \chi^2 \geq 0. 
$$
We invoke Lemma \ref{lem:hors_disque0} (with $a=\xi$) and the definition 
of $H_\eps$  to obtain
\begin{equation}\label{eq:tarik1}
    T_1 \geq  H_\eps(\xi) -C\big( \tilde r^2 + \eps^\frac23\logeps^3\big), 
\end{equation}
where we have also used $(H_1)$ in order to get rid of $\rho_a$ wherever
it appeared. Invoking \eqref{eq:Aaz} to compute some of the terms in \eqref{eq:Errexpand}, we also obtain
$$T_2 \leq 
C\left(
\eps^\frac13\logeps^\frac32 + \tilde r\Big( \Ew(u,\{\tilde r\leq r 
\leq 2\tilde r\}) + \tilde r\Big)^\frac12 + \frac{\eps}{\tilde r}\logeps^2\right),
$$
and since $\tilde r \geq r_\mathcal{S}$ the definition of the latter yields
\begin{equation}\label{eq:tarik2}
T_2 \leq 
C\left(
\eps^\frac13\logeps^\frac32 + \tilde r^\frac32 \logeps^\frac12 + \frac{\eps}{\tilde r}\logeps^2\right).
\end{equation}
It remains to estimate $T_3$ for which we will rely on \eqref{eq:concentrinmoyen} and \eqref{eq:concentroutmoyen}. 
To that purpose, we write
$$
\chi^2 = \chi^2\left( \sum_{i=1}^n \psi^{\rm in}_i + \sum_{j \in \N }
\psi_j^{\rm out}\right) \qquad \text{on } \H
$$
for an appropriate partition of unity on $\H$ verifying the 
following : 
\begin{enumerate}
 \item 
 Each function of the partition is $\mathcal{C}^\infty$ smooth and compactly 
supported, its support has a smooth boundary.
\item
 Each point of $\H$ is contained in the support of at most four functions of 
the partition.
\item
We have 
 $$
{\rm spt}(\psi^{\rm in}_i) \subset B(\xi_i,\le),\qquad |\nabla   
\psi^{\rm 
in}_i|\leq C/\le, \qquad \forall i=1,\cdots,n,
$$
$$
{\rm spt}(\psi^{\rm out}_j) \subset \H \setminus \cup_{i=1}^n 
B(\xi_i,\le/2), \qquad \forall j \in \N.
$$
\item[]
For each $j \in \N$, there exists $r_j>\tilde r/2$ such that
$$
{\rm spt}(\psi^{\rm out}_j) \subset \{r_j \leq r \leq 2r_j\} 
\qquad\text{and}\qquad |\nabla   \psi^{\rm 
out}_j|\leq C\left( \frac{1}{r_j} + \frac{1}{\le}\right).
$$    
\end{enumerate}
The existence of such a partition can be obtained by covering $\H$ 
with rectangular tiles with a step size close to being dyadic in the $r$ direction and 
constant in the $z$ direction and then arranging the round holes corresponding to 
the $\xi_i$'s. It may be necessary to shift a little the rectangular tiles so 
that the balls around the $\xi_i$'s do not meet their boundaries (this is the 
only reason of $r_j$ not being exactly dyadic).\\ 
We use \eqref{eq:concentrinmoyen} for the terms involving $\psi^{\rm in}_i$ and 
\eqref{eq:concentroutmoyen} for those with $\psi^{\rm out}_j.$ Since 
$\chi$ vanishes at $r=0$ we have $|\chi(r,\cdot)| \leq r \|\nabla 
\chi\|_\infty$ and in the dual norm we may crudely estimate
$$
\|r\tilde\Psi_\xi^*\chi^2 \psi_j^{\rm out}\|_{W^{1,\infty}} \leq \frac{C}{\ell_\eps}\leq C \logeps,\qquad 
\|r\tilde\Psi_\xi^*\chi^2 \psi_i^{\rm in}\|_{W^{1,\infty}} \leq \frac{C}{\rho_\eps} \leq C \eps^{-\frac23},
$$
so that we finally obtain
\begin{equation}\label{eq:tarik3}
T_3 \leq C \left(\eps^\frac13\logeps^Ce^{C\Sigma_a^r}+ 
\eps^\frac34\logeps^2\Ew\big( u,\H\setminus \cup_{i=1}^n 
B(\xi_i,\le/2\big)\right) \leq C\eps^\frac13 \logeps^Ce^{C\Sigma_a^r}.
\end{equation}
Combining \eqref{eq:tarik1},\eqref{eq:tarik2} and \eqref{eq:tarik3} in \eqref{eq:norway} we derive
\begin{equation}\label{eq:allezpresque}
    \Ew(u,\{r\geq \tilde r\}) \geq H_\eps(\xi)+T_4 -C\Big( 
\eps^\frac13\logeps^Ce^{C\Sigma_a^r}
 + \tilde r^\frac32 \logeps^\frac12 + \frac{\eps}{\tilde r}\logeps^2
\Big),
\end{equation}
and combining the latter with the definition of $\Sigma_\xi$
yields the upper bound
\begin{equation}\label{eq:arena}
    \Ew(u,\{r\leq \tilde r\}) +T_4 \leq \Sigma_\xi + C\Big(  
\eps^\frac13\logeps^Ce^{C\Sigma_a^r}
 + \tilde r^\frac32 \logeps^\frac12 + \frac{\eps}{\tilde r}\logeps^2
\Big).
\end{equation}
On the other hand, by definition of $r_\mathcal{S}$ we also have the lower bound
\begin{equation}\label{eq:matamor}
\Ew(u,\{r\leq r_\mathcal{S}\}) \geq   \Ew(u,\{r_\mathcal{S}/2 \leq r \leq r_\mathcal{S}\}) \geq \frac{\pi}{24}r_\mathcal{S}\logeps.
\end{equation}
The comparison of \eqref{eq:arena} specified for $\tilde r = r_\mathcal{S}$ and \eqref{eq:matamor} 
leads to the conclusion that 
\begin{equation}\label{eq:bieneu}
    r_\mathcal{S} \leq C \big( \frac{\Sigma_\xi}{\logeps} + \eps^\frac13\logeps^Ce^{C\Sigma_a^r}\big).
\end{equation}

{\noindent \bf  Step 7 : improved closeness and upper energy bounds.} 
We now choose $\tilde r =C \big( \Sigma_\xi/\logeps + \eps^\frac13\logeps^Ce^{C\Sigma_a^r}\big)$ in 
\eqref{eq:arena} to obtain
\begin{equation}\label{eq:finienfin}
 \Ew(u,\{r\leq \tilde r\})+ 
 \int_{\H_{\xi,\rho}}\!\!\!\!\!  r \left[  e_\eps(|u|) + 
\big|\frac{j(u)}{|u|}-j_*\big|^2\right] \chi^2 \leq 
 C \big( \Sigma_\xi + \eps^\frac13\logeps^Ce^{C\Sigma_a^r}\big).
\end{equation}
The same estimate with $\tilde r$ replaced by half its value, combined with the fact 
that in the integral of \eqref{eq:finienfin} the integrand is pointwise dominated by the one of $\Ew$, 
allows, in view of the first term of \eqref{eq:finienfin}, to get rid of $\chi^2$ in the 
integrand and conclude that  
\begin{equation}\label{eq:finienfinetnon}
 \Ew(u,\{r\leq \tilde r\})+ 
 \int_{\H_{\xi,\rho}}\!\!\!\!\!  r \left[  e_\eps(|u|) + 
\big|\frac{j(u)}{|u|}-j_*\big|^2\right] \leq 
 C \big( \Sigma_\xi + \eps^\frac13\logeps^Ce^{C\Sigma_a^r}\big),
\end{equation}
which yields \eqref{eq:dsd2}, for a suitable value of $C_1$, by taking $\rho=\re=\eps^\frac23.$ 
Note that combining the lower bound \eqref{eq:premestim} (with the error terms now controlled) 
with the lower bounds \eqref{eq:lowerblocwb} (used for $\rho=\re=\eps^\frac23$ and for all except one 
$i$) and Lemma \ref{lem:hors_disque0}, we also obtain, in view of the  
definition of $\Sigma_a$,  
\begin{equation}\label{eq:upperboundlocal}
\Ew(u,B(\xi_i,\re)) \leq  r(\xi_{i})\Big(\pi 
\log\frac{\re}{\eps} + \gamma\Big) + C\big(\Sigma_\xi + 
\eps^\frac13\logeps^Ce^{C\Sigma_a^r}\big),
\end{equation}
so that 
\begin{equation}\label{eq:patissier}
\E(u,B(\xi_i,\re)) \leq  \pi 
\log\frac{\re}{\eps} + \gamma  + C\big(\Sigma_\xi + 
\eps^\frac13\logeps^Ce^{C\Sigma_a^r}\big),
\end{equation}
for any $1\leq i \leq n.$  
Inequality \eqref{eq:sigmaxicontrolbad} is a direct consequence of \eqref{eq:aprochexi} and the 
explicit form of $H_\eps.$ Finally, it remains to improve the local estimate 
\eqref{eq:concentrinmoyen} to the more
global one \eqref{eq:dsd1}. For that purpose, it suffices to use a (possibly countable) 
partition of unity, exactly as we did in Step 6, and to rely either on \eqref{eq:dsd1} 
or on Theorem \ref{thm:B.1bis}. By the chain rule, the $W^{-1,1}$ norms after the test function is
multiplied by the functions of the partition are increased at most by a factor being the sup norm 
of the gradients of the partition, which in our case is bounded by $C\logeps.$ 
Estimate \eqref{eq:dsd1} then follows by summation as in \eqref{eq:tarik3}, and 
adapting $C_1$ if necessary.\hfill\qed

\medskip
\noindent{\bf Proof of Proposition \ref{prop:closecore}.} 
First notice that in view of Remark \ref{rem:coincide} and estimate \ref{eq:dsd2}, it suffices
to establish an inequality like \eqref{eq:inthecore} only on each of the balls $B(\xi_i,\eps^\frac23)$.
The proof is very reminiscent of Step 5 in the proof of Proposition \ref{prop:stronglocal}.
We decompose the energy as in \eqref{eq:funddecomp}, but with $j_*$ replaced by $j^\natural$ (here and in the
sequel for simplicity we write $j^\natural$ in place of $j^\natural(u^*_\xi)$):
\begin{equation}\label{eq:funddecomp2}
 e_\eps(u) = \frac12 |j^\natural|^2 +j^\natural\big(\frac{j(u)}{|u|}-j^\natural\big) +  
e_\eps(|u|) + \frac12 \big| \frac{j(u)}{|u|}-j^\natural\big|^2.
\end{equation}
Recall that $\re = \eps^\frac23$ and let $\chi_i$ be a cut-off function with compact support in $B(\xi_i,2\re)$ and 
such that $\chi_i\equiv 1$ on $B(\xi_i,\re)$ and $|\nabla \chi_i|\leq C/\re.$ 
On one hand, similar to \eqref{eq:upperboundlocal} we have the upper bound
\begin{equation}\label{eq:upperboundlocalbis}
    \int r e_\eps(u) \chi_i \leq  r(\xi_{i})\Big(\pi 
\log\frac{2\re}{\eps} + \gamma\Big) + C\big(\Sigma_a^r + 
\eps^\frac13\logeps^Ce^{C\Sigma_a^r}\big).
\end{equation}
On the other hand, by direct computation and the definition \eqref{eq:defreps} of $r_\xi$ we 
have the lower bound
\begin{equation}\label{eq:directcomp}
    \int r\frac{|j^\natural|^2}{2}\chi_i \geq \pi r(\xi_i)\log \frac{2\re}{r_\xi} - C \geq 
    \pi r(\xi_i)\log \frac{\re}{\eps} - C (\Sigma_a^r+ \log\logeps).
\end{equation}
To conclude, it suffices then to control the cross-term in \eqref{eq:funddecomp2}.  
We write
$$
\int r j^\natural\big(\frac{j(u)}{|u|}-j^\natural\big) \chi_i = 
\int r j^\natural\big(j(u)-j^\natural\big) \chi_i +\int r j^\natural\frac{j(u)}{|u|}(|u|-1)\chi_i
$$
and then for arbitrary $\kappa \in \R,$ 
\begin{equation*}\begin{split}
    \int r j^\natural\big(j(u)-j^\natural\big) \chi_i &= \int \nabla^\perp (r\Psi^\natural_\xi-\kappa)\big(j(u)-j^\natural\big) \chi_i\\
                                                      &= -\int (r\Psi^\natural_\xi-\kappa) {\rm curl}(j(u)-j^\natural)\chi_i -\int (r\Psi^\natural_\xi-\kappa)  (j(u)-j^\natural)\nabla^\perp \chi_i.
\end{split}\end{equation*}
Finally, we split
$$
\int (r\Psi^\natural_\xi-\kappa)  (j(u)-j^\natural)\nabla^\perp \chi_i = 
\int (r\Psi^\natural_\xi-\kappa)  (\frac{j(u)}{|u|}-j^\natural)\nabla^\perp \chi_i
+\int (r\Psi^\natural_\xi-\kappa)  \frac{j(u)}{|u|}(|u|-1)\nabla^\perp \chi_i.
$$
We choose $\kappa$ to be the mean value of $r\Psi^\natural_\xi$ over the support of $\nabla^\perp\chi_i$, and therefore
in view of the logarithmic nature of $\Psi^\natural_\xi$ we have the upper bound $|r\Psi^\natural_\xi-\kappa|\leq C$
on the support of $\nabla^\perp \chi_i.$ 
As in Proposition \ref{prop:stronglocal}, by Cauchy-Schwarz and the $L^\infty$ bound on $j^\natural$, we estimate
$$
|\int r j^\natural\frac{j(u)}{|u|}(|u|-1)\chi_i| \leq \frac{C}{r_\xi}\eps\Ew(u,B(\xi_i,2\re)) \leq C
$$
and
$$
|\int (r\Psi^\natural_\xi-\kappa)  \frac{j(u)}{|u|}(|u|-1)\nabla^\perp \chi_i| \leq C\frac{\eps}{\re}\Ew(u,B(\xi_i,2\re)) \leq C.
$$
Next, we have
\begin{equation*}\begin{split}
    |\int (r\Psi^\natural_\xi-\kappa)  
    (\frac{j(u)}{|u|}-j^\natural)\nabla^\perp \chi_i| &
    \leq C\| \frac{j(u)}{|u|}-j^\natural\|_{L^2({\rm supp}(\nabla \chi_i))} \|\nabla \chi_i\|_{L^2}\\
                                                                                         &\leq C\big(\Sigma_a^r + 
\eps^\frac13\logeps^Ce^{C\Sigma_a^r}\big)^\frac12\\
&\leq C(\Sigma_a^r+1).
\end{split}\end{equation*}
For the last term, we write
\begin{equation*}\begin{split}
|\int (r\Psi^\natural_\xi-\kappa) {\rm curl}(j(u)-j^\natural)\chi_i| 
&\leq C \|2J(u)-{\rm curl}j^\natural\|_{W^{-1,1}(B(\xi_i,2\re))} \| (r\Psi^\natural_\xi-\kappa)\chi_i\|_{W^{1,\infty}}\\ 
&\leq C r_\xi \frac{1}{r_\xi} \leq C,
\end{split}\end{equation*}
where we have used \eqref{eq:dsd1} and the fact that by construction
$$
\|{\rm curl}j^\natural-2\pi\sum_{i=1}^n \delta_{\xi_i}\|_{\dot W^{-1,1}(B(\xi_i,2\re))}\leq C r_\xi.
$$
The conclusion follows.\hfill\qed

\noindent{\bf Proof of Proposition \ref{prop:maintermdynamics}.}
    Since $j^\natural(u_\xi^*)$ is not sufficiently regular across the boundaries of the sets $\mathcal{C}_i$, 
    defined after \eqref{eq:defreps}, the computation which follows
    \eqref{eq:expandF} does not hold as is with $X$ replaced by $j^\natural(u_\xi^*)$ and we need instead to 
    divide the integration domain $\H$ into the union of the pieces $\mathcal{C}_i$ and of the complement of 
    this union. Performing the integration by parts then imply (only) some boundary terms,
    which actually end up in justifying \eqref{eq:Fjoli} provided ${\rm curl X}$ is
    understood in a weak sense according to \eqref{eq:curljnatural} and ${\rm
    div}(rX)$ according to \eqref{eq:divjnatural}, namely
    \begin{equation}\label{eq:dynamintegral}
    \mathcal{F}\big(j^\natural(u_\xi^*),\varphi\big)=-\sum_{i=1}^n
    \int_{\partial\mathcal{C}_i} |j(u_\xi^*)|j(u_\xi^*)\cdot \nabla \varphi.  
\end{equation}
For each fixed $i$, to compute the boundary term on $\partial\mathcal{C}_i$ we use a reference
polar frame $(\rho,\theta)$ centered at $\xi_i$. First by construction of
    $\mathcal{C}_i$ and \eqref{eq:dAa}-\eqref{eq:Aaz} we have
    \begin{equation}\label{eq:presquecercle}
    \rho = r_\xi +O\big(r_\xi^2\log(r_\xi)\big),
    \end{equation}
    so that $\mathcal{C}_i$ is close to being a circle, and then by \eqref{eq:dAa}, \eqref{eq:Aaz} and \eqref{eq:presquecercle},
    \begin{equation}\label{eq:jnaturalexpand}
    j^\natural(u_\xi^*) = \frac{e_\theta}{r_\xi} + \sum_j \mathbb{J}\nabla_{a_j}H_\eps(\xi_1,\cdots,\xi_n) + O(\Sigma_a^r + \log\logeps) \qquad \text{on } \partial\mathcal{C}_i,
    \end{equation}
    where the main error term, of order $\Sigma^r_a + \log\logeps$, comes from the
    difference between $\logeps$ (as appearing in the definition of $H_\eps$) and $\log r_\xi$ (from the value of 
    $\Psi^*_\xi$ on $\mathcal{C}_i$). The computation of the right-hand-side of \eqref{eq:dynamintegral} is 
    then a direct consequence of \eqref{eq:presquecercle} and \eqref{eq:jnaturalexpand}, with a cancellation at 
    main order since $\frac{e_\theta}{\rho}$ integrates to zero on a circle. The actual 
    details are left to the reader.  \hfill \qed

\noindent{\bf Proof of Proposition \ref{prop:momentumapprox}.}
Since estimate \eqref{eq:dsd1} is only valid for $r$ not too close to zero, we shall split
$\chi_\eps$ into two pieces. More precisely, we write $1 = \Psi_1 + \Psi_2$ where $\Psi_1$ is
supported in $r \leq 2\tau \equiv 2C_1(\Sigma_a^r+1)/\logeps$, $\Psi_2$ is supported in $r\geq \tau$, 
and $|\nabla \Psi_1|+|\nabla \Psi_2| \leq 10/\tau.$ Using \eqref{eq:dsd1} we immediately
obtain
$$
\left|\int Ju r^2\chi_\eps\Psi_2 - \pi\sum_{i=1}^n r(\xi_i)^2\right| \leq C_1 \eps\logeps^{C_1}e^{C_1\Sigma_a^r}\|r^2\chi_R\Psi_2\|_{W^{1,\infty}} \leq C \eps\logeps^{C_1}e^{C_1\Sigma_a^r}  R_\eps^2. 
$$
To estimate the part involving $\Psi_1$, and in particular the singularity at $r=0,$ we use
Theorem \ref{thm:B.1ter} (more precisely its higher dimensional extension - see e.g. \cite{JeSo}) 
in the 3D cylinder in cartesian coordinates  corresponding to $r\leq 2\tau$ and
$|z| \leq 2R_\eps.$ Writing back its statement in cylindrical coordinates yields
\begin{equation}\begin{split}
    \left|\int Ju r^2\chi_\eps\Psi_1 \right| \leq& C  \ \frac{\Ew(u,\{r \leq 2\tau\})}{\logeps} 
\|r\chi_\eps\Psi_1\|_\infty\\
&\ + C \eps^\frac{1}{24}(1+ \frac{\Ew(u,\{r \leq 2\tau\})}{\logeps} )(1+C\tau^2R_\eps)  
\|r\chi_\eps\Psi_1\|_{\mathcal{C}^{0,1}}\\
\leq& \ C  \frac{1+\Sigma_a^r}{\logeps}\tau \\
\leq& \ C  \frac{(1+\Sigma_a^r)^2}{\logeps^2,}
\end{split}\end{equation}
provided $\eps$ is  required to be sufficiently small. By summation we obain \eqref{eq:approxJ}.
To obtain \eqref{eq:approxddtJ}, we notice that in the expansion \eqref{eq:expandF} the terms for 
which the derivatives of $\varphi$ fall onto $r^2$ exactly cancel (that would correspond without cut-off to
the conservation of the momentum) and the remaining ones (where the derivatives fall onto $\chi_\eps$) are 
pointwise bounded by $C e_\eps(u)r|\nabla \chi_\eps|$, so that the conclusion follows by integration 
and \eqref{eq:dsd2}.\hfill\qed 

\medskip
\noindent{\bf Proof of Proposition \ref{prop:firstspeedbound}.} 
In this proof  $\|\cdot\|$
is understood to mean $\dot W^{-1,1}(\Omega_0)$ and $|\cdot|$  refers to
the Euclidean norm on $\H.$

We write 
\begin{equation}\label{eq:decompjac}\begin{split}
    \|Ju_\eps^{s_1}-Ju_\eps^{s_2}\| &\leq
    \|Ju_\eps^{s_1}-\pi\sum_{i=1}^n \delta_{\xi_i(s_1)}\|
+
\|Ju_\eps^{s_2}-\pi\sum_{i=1}^n \delta_{\xi_i(s_2)}\|
+
\|\pi\sum_{i=1}^n (\delta_{\xi_i(s_1)}-\delta_{\xi_i(s_2)})\|\\
&\leq r_\xi^{s_1}+r_\xi^{s_2} + \pi \sum_{i=1}^n
|\xi_i(s_1)-\xi_i(s_2)|.
\end{split}\end{equation}
If $C_6$ is chosen sufficiently large, it follows from the separation assumption \eqref{eq:bonneechelle}, 
the finite speed of propagation of $\lf_\eps$, and the definition of $S_{\rm stop}$, that
$$
\xi_i(s) \in B(a_{i,\eps}(s_1),\frac{\rho_{\rm min}}{4}) \qquad \forall\ s \in [s_1,s_2].
$$
Let 
\begin{equation}\label{eq:defvarphi}
\varphi(x) = \sum_{i=1}^n
\frac{(x-a_{i,\eps}(s_1))\cdot(\xi_i(s_2)-\xi_i(s_1))}{|\xi_i(s_2)-\xi_i(s_1)|} \chi\Big(
|x-a_{i,\eps}(s_1)|\Big),
\end{equation}
where $\chi\in\mathcal{C}^\infty(\R^+,[0,1])$ is such that 
$\chi\equiv 1$ on $[0,\rho_{\rm min}/4],$ 
$\chi\equiv 0$ on  $[\frac{\rho_{\rm min}}{2},+\infty).$ 
By construction and the definition of $\rho_{\rm min}$, we have $\varphi \in
\mathcal{D}(\Omega_0)$ and it follows that 
\begin{equation}\label{eq:bornedistpoints}
\begin{split}
\pi \sum_{i=1}^n |\xi_i(s_1)-\xi_i(s_2)| 
&= \langle \pi\sum_{i=1}^n
(\delta_{\xi_i(s_2)}-\delta_{\xi_i(s_1)}), \varphi \rangle\\
&\leq (r_\xi^{s_1} + r_\xi^{s_2})\|\varphi\|_{W^{1,\infty}} + 
\langle Ju_\eps^{s_2} - J u_\eps^{s_1} ,\varphi\rangle.
\end{split}
\end{equation}
Combining this with \eqref{eq:decompjac}, we conclude that 
\begin{equation}\label{eq:forlan}
    \| Ju_\eps^{s_2} - Ju_\eps^{s_1}\| \leq C( r^{s_1}_\xi + r^{s_2}_\xi)  +\langle
    Ju_\eps^{s_2} - J u_\eps^{s_1} ,\varphi\rangle. 
\end{equation}
By \eqref{eq:fondam1},
\begin{equation}\label{eq:norway0}
    \left|\langle Ju_\eps^{s_2} - J u_\eps^{s_1} ,\varphi\rangle\right| \leq
    \frac{1}{\logeps}|\int_{s_1}^{s_2} \mathcal{F}(\nabla u_\eps^s,\varphi)\, ds|.
\end{equation}
Recall that
\begin{equation}\label{eq:defFanouveau}
\mathcal{F}(\nabla u_\eps^s, \varphi)
:= -\int_\H \eps_{ij} \frac{\partial_k r}{r} (\partial_j u_\eps^s  ,\partial_k u_\eps^s)\partial_i\varphi 
+\int_\H \eps_{ij}(\partial_ju_\eps^s,\partial_k u_\eps^s)\partial_{ik}\varphi,
\end{equation}
and that by \eqref{eq:defvarphi} we have
$$
\partial_{ik}\varphi \equiv 0 \qquad \text{on } \cup_i B(a_{i,\eps}(s_1),\frac{\rho_{\rm min}}{4}).
$$
Since $|\nabla \varphi|\leq C$ and $|D^2\varphi| \leq C/\rho_{\rm min}$, we have
$$
\left|\int_\H \eps_{ij} \frac{\partial_k r}{r} (\partial_j u_\eps^s  ,\partial_k u_\eps^s)\partial_i\varphi \right|
\leq C\Ew(u_\eps^s) \leq C\logeps
$$
and by \eqref{eq:dsd2}, \eqref{eq:sigmaxicontrol} and \eqref{eq:stop}
$$
\left| \int_\H \eps_{ij}(\partial_ju_\eps^s,\partial_k u_\eps^s)\partial_{ik}\varphi\right| \leq 
\frac{C}{\rho_{\rm min}}\left( \Sigma^0+r_a^s\logeps+\log\logeps\right) \leq C \logeps.
$$
Going back to \eqref{eq:norway0} we thus obtain
\begin{equation}\label{eq:norway1}
 \left|\langle Ju_\eps^{s_2} - J u_\eps^{s_1} ,\varphi\rangle\right| \leq
    C|s_1-s_2|
\end{equation}
and therefore
\begin{equation}\label{eq:norway2}
    \|Ju_\eps^{s_1}-Ju_\eps^{s_2}\| \leq C(r^{s_1}_\xi + r^{s_2}_\xi + |s_1-s_2|). 
\end{equation}
It remains to estimate $r_\xi^{s_2}.$
For that purpose, we write 
\begin{equation}\label{eq:rxis}\begin{split}
    r_a^{s_2} &= \| J u_\eps^{s_2} - \pi \sum_{i=1}^n \delta_{a_{i,\eps}(s_2)}\| \\ 
              &\leq \|Ju_\eps^{s_2} - J u_\eps^{s_1}\| + \|Ju_\eps^{s_1} -\pi\sum_{i=1}^n
    \delta_{a_{i,\eps}(s_1)}\| + \|\pi\sum_{i=1}^n
    (\delta_{a_{i,\eps}(s_1)}-\delta_{a_{i,\eps}(s_2)})\|\\
    &\leq r_a^{s_1} + C\bigl( |s_1-s_2| + r_\xi^{s_1} + r_\xi^{s_2} \bigr),
\end{split}\end{equation}
where we have used \eqref{eq:norway2} and \eqref{eq:bornedistpoints}. 
By the definition \eqref{eq:defrxis} of $r_\xi^{s_2}$ and \eqref{eq:rxis} we obtain
\begin{equation}\label{eq:rxis2}
    \begin{split}
        r_\xi^{s_2} &\leq C_1\eps\logeps^{C_1}e^{C_1(\Sigma^0+r_a^{s_2}\logeps)}\\
                    &\leq r_\xi^{s_1}e^{C(|s_2-s_1|+2\eps^\frac56)\logeps}\\
                    &\leq r_\xi^{s_1}(1+C\left(|s_2-s_1|+\eps^\frac56\right)\logeps), 
\end{split}
\end{equation}
where we have used the fact that $|s_2-s_1| \leq \logeps^{-1}$ by assumption. It remains to prove
the last assertion of the statement, namely that if $r_a^{s_1} < \rho_{\rm min}/16$ then $S_{\rm stop}
\geq s_1+(C_6\sqrt{\logeps})^{-1}.$ By definition of $S_{\rm stop},$ the latter follows easily from \eqref{eq:rxis}
and \eqref{eq:rxis2}, increasing the value of $C_6$ if necessary. 
\qed

\medskip
\noindent{\bf Proof of Proposition \ref{prop:controldiscrep}.}
The proof follows very closely the strategy  used in \cite{JeSm3} Proposition 7.1.
By \eqref{eq:rxiequiv} and the definition of $S$ we first remark that 
\begin{equation}\label{eq:rxidouble}
    r_\xi^\tau \leq 2 r_\xi^s \qquad \forall \tau \in [s,S].
\end{equation}
Next, note that 
\begin{equation}\begin{split}
r^S_a - r^s_a &= 
\| Ju_\eps^S - \pi \sum_{i=1}^n  \delta_{a_{i,\eps}(S)}\|
-
\| Ju_\eps^s - \pi \sum_{i=1}^n  \delta_{a_{i,\eps}(s)}\|\\
&\leq 
\pi \sum_{i=1}^n \left(|\xi_i(S)-a_{i,\eps}(S)| - |\xi_i(s) - a_{i,\eps}(s)| \right)\ + \ r^S_\xi + r^s_\xi
\\
&\leq 
\pi \sum_{i=1}^n  \nu_i \cdot \bigl(\xi_i(S)- \xi_i(s)  + a_{i,\eps}(s)- a_{i,\eps}(S) \bigr)\ + \ 
r^S_\xi + r^s_\xi
\label{eq:finland0}
\end{split}
\end{equation}
for $\nu_i = \frac {\xi_i(S)-a_{i,\eps}(S)}{|\xi_i(S)-a_{i,\eps}(S)|}$ (unless $\xi_i(S)- a_{i,\eps}(S)=0$, in which
case $\nu_i$ can be any unit vector).
We let 
\[
    \varphi(x) = \sum_i  \nu_i \cdot(x - a_{i,\eps}(s)) \chi(|x - a_{i,\eps}(s)|)
\]
for $\chi\in C^\infty(\R^+, [0,1])$ such that $\chi \equiv 1$ on $[0,\frac 14 \rho_{\rm min}]$ and $\chi\equiv 0$ 
on $(\frac12 \rho_{\rm min},\infty)$. 
It follows from \eqref{eq:dansboule} that 
\[
    \pi \sum_{i=1}^n  \nu_i \cdot \bigl(\xi_i(S)- \xi_i(s)  + a_{i,\eps}(s)- a_{i,\eps}(S) \bigr)\
=
\pi \sum_{i=1}^n  \Bigl[ \varphi(\xi_i(S))- \varphi(\xi_i(s))  - \varphi(a_{i,\eps}(S)) + \varphi(a_{i,\eps}(s)) \Bigr],
\]
so that \eqref{eq:finland0} and the definition of $r^S_\xi$ imply that 
\begin{equation}\label{eq:finland1}
r^S_a-r^s_a \leq \langle  \varphi , Ju_\eps^S - Ju_\eps^s \rangle  \ -
\ \pi\sum_{i=1}^n \Bigl[\varphi(a_{i,\eps}(S)) - \varphi(a_{i,\eps}(s))\Bigr]
+\ C(r^S_\xi + r^s_\xi).
\end{equation}
Our main task in the sequel is therefore to provide an estimate for the quantity 
$ \langle  \varphi , Ju_\eps^S - Ju_\eps^s \rangle $.
By \eqref{eq:fondam1} (and taking into account the $\logeps$ change of scale in time) we have
\begin{equation}\label{eq:finland2}
\langle \varphi , Ju_\eps^S - Ju_\eps^s \rangle 
= \int_s^S  \frac{1}{\logeps} \mathcal{F}(\nabla u_\eps^\tau ,\varphi)\, d\tau.
\end{equation}
In the sequel for the ease of notation we write $u$ in place of $u_\eps^\tau$, for $\tau \in [s,S].$ 
Similar to what we did in \eqref{eq:funddecomp}, and in view of the definition \eqref{eq:defF} of $\mathcal{F},$
we decompose here
\begin{equation}\begin{split}
    \left( \partial_j u,\partial_k u \right) &= \partial_j |u|\partial_k |u| + \frac{j(u)_j}{|u|}\frac{j(u)_k}{|u|}\\
                                             &= \partial_j |u|\partial_k |u| + \left(\frac{j(u)}{|u|}-j^\natural\right)_j\left(\frac{j(u)}{|u|}-j^\natural\right)_k\\
                                             &\quad +  \left(\frac{j(u)}{|u|}-j^\natural\right)_j\left(j^\natural\right)_k +\left(\frac{j(u)}{|u|}-j^\natural\right)_k\left(j^\natural\right)_j\\
                                             &\quad + \left(j^\natural\right)_j\left(j^\natural\right)_k,
\end{split}\end{equation}
where
$$
j^\natural \equiv j^\natural(u_{\xi(\tau)}^*).
$$
Hence,
\begin{equation}\label{eq:splitmotion}
    \mathcal{F}(\nabla u,\varphi) = \mathcal{F}(j^\natural,\varphi)+ \sum_{p=1}^4 T_p
\end{equation}
where
$$
T_1 := -\int_\H \eps_{ij} \frac{\partial_k r}{r} \left[
  \partial_j |u|\partial_k |u| + \left(\frac{j(u)}{|u|}-j^\natural\right)_j\left(\frac{j(u)}{|u|}- 
  j^\natural\right)_k\right]\partial_i\varphi, 
$$
$$
T_2 := \int_\H \eps_{ij}\left[\partial_j |u|\partial_k |u| +
\left(\frac{j(u)}{|u|}-j^\natural\right)_j\left(\frac{j(u)}{|u|}-j^\natural\right)_k\right]\partial_{ik}\varphi,
$$
$$
T_3 := -\int_\H \eps_{ij} \frac{\partial_k r}{r}  \left[
    \left(\frac{j(u)}{|u|}-j^\natural\right)_j\left(j^\natural\right)_k +
\left(\frac{j(u)}{|u|}-j^\natural\right)_k\left(j^\natural\right)_j \right] 
\partial_i\varphi, 
$$
and
$$
T_4 := \int_\H \eps_{ij} \left[
    \left(\frac{j(u)}{|u|}-j^\natural\right)_j\left(j^\natural\right)_k +
\left(\frac{j(u)}{|u|}-j^\natural\right)_k\left(j^\natural\right)_j \right] 
\partial_{ik}\varphi.
$$
By Proposition \ref{prop:maintermdynamics} we already know that
$$
\left| \mathcal{F}(j^\natural,\varphi) - \sum_{i=1}^n 
\mathbb{J}\nabla_{a_i}H_\eps(\xi_1(\tau),\cdots,\xi_n(\tau))\cdot
\nabla\varphi(\xi_i(\tau)) \right| \leq C_3\left( 
    \Sigma^0+r_a^\tau\logeps + \log\logeps\right),
$$
moreover since $S \leq S_{\rm stop}$ we have for any $i=1,\cdots,n,$
$$
\left|  
\nabla_{a_i}H_\eps(\xi_1(\tau),\cdots,\xi_n(\tau))- 
\nabla_{a_i}H_\eps(a_{1,\eps}(\tau),\cdots,a_{n,\eps}(\tau))
\right| \leq C \logeps r_a^\tau
$$
and since $\varphi$ is affine there, 
$$
\nabla\varphi(\xi_i(\tau)) = \nabla\varphi(a_{i,\eps}(\tau)), 
$$
so that after integration and using the fact that the points $a_{i,\eps}$ evolve
according to the ODE $\lf_\eps$ we obtain  
$$
\left|\int_s^S \frac{1}{\logeps}\mathcal{F}(j^\natural,\varphi)\, d\tau - 
\pi\sum_{i=1}^n \Bigl[\varphi(a_{i,\eps}(S)) - \varphi(a_{i,\eps}(s))\Bigr]\right|
\leq C(S-s)\left(r_a^s + \frac{\Sigma^0}{\logeps} +
\frac{\log\logeps}{\logeps}\right). 
$$
Now that we have accounted for the main order, we need to control all the terms
$T_p$ (at least integrated in time between $s$ and $S$).
We begin with the terms $T_1$ and $T_2$ for which we already have good estimates 
(pointwise in time) thanks to Proposition \ref{prop:stronglocal} and Proposition
\ref{prop:closecore}. Indeed, by Proposition \ref{prop:closecore} and since $|\nabla
\varphi| \leq C$, we have
\begin{equation}\label{eq:boundT1}
    \frac{|T_1|}{\logeps} \leq C\left( r_a^s + \frac{\Sigma^0}{\logeps} +
    \frac{\log\logeps}{\logeps}\right). 
\end{equation}
By Proposition \ref{prop:stronglocal} and \eqref{eq:sigmaxicontrol}, and since
$|D^2\varphi| \leq C\sqrt{\logeps}$, we have
\begin{equation}\label{eq:boundT2}
    \frac{|T_2|}{\logeps} \leq C\left( r_a^s + \frac{\Sigma^0}{\sqrt{\logeps}}
        +\logeps^{-\frac32} + \sqrt{\logeps}
    \Big|P_\eps(u)-P\big(a_{1,\eps}(\tau)\cdots,a_{n,\eps}(\tau)\big)\Big| \right).
\end{equation}
In order to deal with the last term involving $P$ and $P_\eps$, recall first that $P$ 
is preserved by the flow $\lf_\eps$, so that
\begin{equation}\label{eq:sweden0}
P\big(a_{1,\eps}(\tau),\cdots,a_{n,\eps}(\tau)\big) =
P\big(a_{1,\eps}(0),\cdots,a_{n,\eps}(0)\big),
\end{equation}
and that
$P_\eps$ is almost preserved by $\gp$, as expressed by \eqref{eq:approxddtJ}, so that
\begin{equation}\label{eq:sweden1}
    \left| P_\eps(u)- P_\eps(u_\eps^0)\right| \leq C\frac{S_{\rm stop}}{\logeps^3}\left(
    \Sigma^0+ \sqrt{\logeps}\right) \leq  C S_{\rm stop} \logeps^{-\frac52}
\end{equation}
where we have used the rough bound \eqref{eq:sigmaxicontrolbad} for $\Sigma_\xi$, the
rough estimate $r_a^\tau \leq C/\sqrt{\logeps}$ which follows from the definition of
$S_{\rm stop}$, and where we have taken into account the factor $\logeps^{-1}$ which 
arises from the change of time scale which we have here with respect to the one of 
\eqref{eq:fondam1}.
On the other hand, at the initial time by \eqref{eq:approxJ} and the bound
\eqref{eq:boundinit} on $r_a^0$ and $\Sigma^0$ we have
\begin{equation}\label{eq:sweden2}
    \left|P_\eps(u_\eps^0)-  P\big(\{a_{i,\eps}(0)\}\big)\right| \leq 
    \left|P_\eps(u_\eps^0)-  P\big(\{\xi_{i}(0)\}\big)\right| + Cr_a^0 \leq 
    C\left(r_a^0+\frac{1}{\logeps^2}\right). 
\end{equation}
In total, similar to \eqref{eq:boundT1} we obtain
\begin{equation}\label{eq:boundT2bis}
    \frac{|T_2|}{\logeps} \leq C\left( r_a^s+
        \frac{\Sigma^0+r_a^0\logeps}{\sqrt{\logeps}} +
    \frac{\log\logeps}{\logeps}\right),
\end{equation}
where we have absorbed some of the above error terms by the term $\log\logeps/\logeps.$ 
We decompose
$$
T_3 = T_{3,1}+T_{3,2}+T_{3,3}
$$
where
$$
T_{3,1}:= -\int_\H \eps_{ij} \frac{\partial_k r}{r}  \left[
    \left(\frac{j(u)}{|u|}-j^\natural\right)_j\left(j^\natural-j^\natural(u_{\xi(s)}^*)\right)_k +
\left(\frac{j(u)}{|u|}-j^\natural\right)_k\left(j^\natural-j^\natural(u_{\xi(s)}^*)\right)_j \right] 
\partial_i\varphi, 
$$
$$
T_{3,2}:= -\int_\H \eps_{ij} \frac{\partial_k r}{r}  \left[
    \left(j(u)-j^\natural\right)_j\left(j^\natural(u_{\xi(s)}^*)\right)_k +
\left(j(u)-j^\natural\right)_k\left(j^\natural(u_{\xi(s)}^*)\right)_j \right] 
\partial_i\varphi, 
$$
and
$$
T_{3,3}:= -\int_\H \eps_{ij} \frac{\partial_k r}{r}  \left[
    \left(\frac{j(u)}{|u|}\right)_j\left(j^\natural(u_{\xi(s)}^*)\right)_k +
\left(\frac{j(u)}{|u|}\right)_k\left(j^\natural(u_{\xi(s)}^*)\right)_j \right](1-|u|) 
\partial_i\varphi, 
$$
and accordingly we decompose $T_4 = T_{4,1}+T_{4,2}+T_{4,3}.$   
We first deal with $T_{3,3}$ and $T_{4,3}$, where invoking the inequality
\begin{equation}\label{eq:degiorgi}
\frac{|j(u)|}{|u|}(1-|u|) \leq \frac{\eps}{2} \left(\frac{|j(u)|^2}{|u|^2} + \frac{(1-|u|)^2}{\eps^2}\right) \leq C\eps e_\eps(u)
\end{equation}
combined with the  global $\logeps$ bound on the energy and the $L^\infty$ bound
$|j^\natural| \leq Cr_\xi^{-1} \leq C\eps^{-1}/\logeps^{C_1}$ we directly infer (increasing $C_1$ if necessary) that
\begin{equation}\label{eq:T4T6}
    \frac{T_{3,3}+T_{4,3}}{\logeps} \leq \frac{C}{\logeps}.
\end{equation}
We next turn to the terms $T_{3,1}$ and $T_{4,1}$, for which we rely on Proposition \ref{prop:firstspeedbound}  
and the definition of $S$ to get the upper bound
\begin{equation}\label{eq:traveldist}
    \sum_{i=1}^n |\xi_i(s)-\xi_i(\tau)| \leq C\left( r_\xi^s + \frac{(r_\xi^s)^2}{\eps}\right) 
    \leq C \frac{(r_\xi^s)^2}{\eps}. 
\end{equation}
Using the almost explicit form of $j^\natural$ (more precisely \eqref{eq:Aa}, \eqref{eq:dAa} and the definition
of the cut-off at the scale $r_\xi^s$), we compute that
\begin{equation}\label{eq:diffcore}
    \int_{{\rm supp}(\varphi)} \left| j^\natural-j^\natural(u_{\xi(s)}^*)\right|^2 \leq C
    (1+\log\left(\frac{\frac{(r_\xi^s)^2}{\eps}}{r_\xi^s}\right)) \leq C\left( \Sigma^0+r_a^s\logeps + \log\logeps\right). 
\end{equation}
The previous inequality combined with Proposition \ref{prop:closecore} and the Cauchy-Schwarz inequality 
then yields   
$$
\frac{T_{3,1}}{\logeps} \leq C\left( r_a^s + \frac{\Sigma^0}{\logeps} + \frac{\log\logeps}{\logeps}\right).
$$
For $T_{4,1}$, since the integration domain does no longer contain the cores we obtain the stronger estimate
\begin{equation}\label{eq:diffcorebis}
    \int_{{\rm supp}(D^2\varphi)} \left| j^\natural-j^\natural(u_{\xi(s)}^*)\right|^2 \leq C \frac{(\sum_{i=1}^n |\xi_i(s)-\xi_i(\tau)|)^2}{\rho_{\rm min}^2} \leq
    C\eps^\frac23\logeps,  
\end{equation}
where we have used \eqref{eq:traveldist} and \eqref{eq:securitybounds} for the last inequality. Using once more the Cauchy-Schwarz inequality, combined here with 
Proposition \ref{prop:stronglocal} (or even simply the crude $\logeps$ global energy bound) and the $L^\infty$ bound
$|D^2\varphi| \leq C\sqrt{\logeps}$ we obtain
$$
\frac{T_{4,1}}{\logeps} \leq C\eps^\frac13 \sqrt{\logeps}.
$$
At this stage we are left to estimate $T_{3,2}$ and $T_{4,2}$, which we will only be able to do after integration
in time. To underline better the time dependence, it is convenient here to write $j^\natural_\tau$ in place of
$j^\natural$ and $j^\natural_s$ in place of $j^\natural(u_\xi^s).$ The main ingredient in the argument is then 
to perform a Helmholtz type decomposition of $j(u)-j^\natural_\tau.$ More precisely, we first fix a cut-off
function $\chi$ with compact smooth support $B$ in $\{r \geq \frac{r_0}{8}\}$,  which is identically equal to $1$ on the 
support of $\varphi$ and which satisfies $|\nabla \chi| \leq C$ (its only aim is to get rid of boundary terms,
of spatial infinity, and of the singularity at $r=0$). For every $\tau \in [s,S],$ we then set
\begin{equation}\label{eq:helmholtz}
    \chi(j(u)-j^\natural_\tau) = \nabla f^\tau + \frac{1}{r}\nabla^\perp g^\tau\qquad\text{in } B,
\end{equation}
where
$f^\tau$ and $g^\tau$ are the unique solutions of the Neumann 
\begin{equation}\label{eq:ellipf}
    \begin{array}{ll}
        {\rm div}\left( r \nabla f^\tau\right) = {\rm div}\left( \chi r(j(u)-j^\natural_\tau)\right) & \text{ in } B,\\
        \partial_n f^\tau = 0 & \text{ on } \partial B,\\
        \int_B f^\tau = 0,
    \end{array}
\end{equation}
and Dirichlet
\begin{equation}\label{eq:ellipg}
    \begin{array}{ll}
        -{\rm div}\left( \frac{1}{r} \nabla g^\tau\right) = {\rm curl}\left( \chi (j(u)-j^\natural_\tau)\right) & \text{ in } B,\\
        g^\tau = 0 & \text{ on } \partial B
    \end{array}
\end{equation}
boundary value problems. By construction,
\begin{equation}\label{eq:reste1}
\int_s^S \frac{T_{3,2}}{\logeps}\,d\tau = -\frac{1}{\logeps}\int_B \eps_{ij} \frac{\partial_k r}{r}
\left[(\nabla F + \frac{\nabla^\perp G}{r})_j(j^\natural_s)_k 
+(\nabla F + \frac{\nabla^\perp G}{r})_k(j^\natural_s)_j \right] \partial_i\varphi 
\end{equation}
and
\begin{equation}\label{eq:reste2}
\int_s^S \frac{T_{4,2}}{\logeps}\,d\tau = -\frac{1}{\logeps}\int_B \eps_{ij} 
\left[(\nabla F + \frac{\nabla^\perp G}{r})_j(j^\natural_s)_k 
+(\nabla F + \frac{\nabla^\perp G}{r})_k(j^\natural_s)_j \right] \partial_{ik}\varphi 
\end{equation}
where $F=\int_s^S f^\tau\,d\tau$ and $G=\int_s^S g^\tau\,d\tau.$ \\
Integrating \eqref{eq:ellipf}, 
we split $F=F_1+F_2+F_3$ where   
$$
 {\rm div}\left( r \nabla F_p\right) = L_p \text{ in } B,\qquad  \partial_n F_p = 0  \text{ on } \partial B, \qquad\int_B F_p = 0,
$$
for $p=1,2,3,$ and where
$$
L_1:= \chi\int_s^S {\rm div}(rj(u)),\qquad L_2:=r\nabla \chi \cdot \int_s^S \frac{j(u)}{|u|}(|u|-1),\qquad
L_3:= r\nabla\chi\cdot\int_s^S (\frac{j(u)}{|u|}-j^\natural_\tau).
$$
Similarly, integrating \eqref{eq:ellipg} we split $G=G_1+G_2+G_3$ where  
$$
{\rm div}\left( \frac{1}{r} \nabla G_p\right) = M_p \text{ in } B,\qquad  M_p = 0  \text{ on } \partial B,
$$
for $p=1,2,3,$ and where
$$
M_1:= \chi\int_s^S {\rm curl}(j(u)-j^\natural_\tau),\qquad M_2:=\nabla^\perp \chi \cdot \int_s^S \frac{j(u)}{|u|}(|u|-1),\qquad
M_3:= \nabla^\perp\chi\cdot\int_s^S (\frac{j(u)}{|u|}-j^\natural_\tau).
$$
Before we state precise bounds for each of them, we note that it should be clear at this stage that all the terms $L_p$ and $M_p$ 
are small in some sense, except perhaps for the term $L_1$ which requires some more explanation.
For that last term, we rely on the continuity equation (and this is the main reason for the integration in time)
\begin{equation}\label{eq:continuity}
\partial_t |u|^2 = \frac{2}{r}{\rm div}(rj(u))
\end{equation}
which is a consequence of $\gp$, and from which we infer that
\begin{equation}\label{eq:L1new}
    L_1 = \eps \frac{\chi}{\logeps}\left[ \frac{(|u|^2-1)}{\eps} \right]^S_s,
\end{equation}
so that
$$
\|L_1\|_{L^2} \leq   C\frac{\eps}{\sqrt{\logeps}}\leq C\frac{(S-s)}{\sqrt{\logeps}}\frac{\eps^2}{(r_\xi^s)^2}
\leq C(S-s) \logeps^{-2C_1},
$$
where we have used the definitions of $S$ and $r_\xi^s.$
Regarding $L_2$ and $M_2$, using \eqref{eq:degiorgi} we easily obtain 
$$
\|L_2\|_{L^1}+\|M_2\|_{L^1} \leq   C\eps(S-s)\logeps.
$$
For $L_3$ and $M_3$, we use the fact that $\nabla \chi$ lives away from the cores
so that  Proposition  \ref{prop:stronglocal} and Proposition 
\ref{cor:apprixgood} yield (we bound the
terms involving $P$ in \eqref{eq:sigmaxicontrol} exactly as we did to simplify \eqref{eq:boundT2} into \eqref{eq:boundT2bis})
$$
\|L_3\|_{L^2} +\|M_3\|_{L^2}\leq C(S-s) \left( r_a^s\sqrt{\logeps}+
\Sigma^0+r_a^0\logeps + \logeps^{-1}\right)^{\frac12} \leq C(S-s).
$$
Finally, regarding $M_1$, we have on one side using \eqref{eq:dsd1} and \eqref{eq:rxidouble} 
$$
\|M_1\|_{W^{-1,1}} \leq C(S-s)r_\xi^s,
$$
and on the other side using the pointwise inequality $Ju \leq Ce_\eps(u)$ for
an arbitrary function $u$ and the global energy bound 
$$
\|M_1\|_{L^1} \leq C(S-s)\logeps.
$$
By interpolation, it follows that for any $1<p<2$
$$
\|M_1\|_{W^{-1,p}} \leq C_p(S-s)(r_\xi^s)^\theta\logeps^{1-\theta},
$$
where $\frac{1}{p} = \theta + \frac{1-\theta}{2}.$
These bounds on $L_i$ and $M_i$ turn into bounds on $F_i$ and $G_i$, since by standard elliptic 
estimates we have
\begin{equation}\label{eq:ellipticestim}
    \begin{array}{ll}
        \|\nabla F_1\|_{L^p} \leq C_p \|L_1\|_{L^2} & \text{ for all } 1 \leq p<+\infty,\\ 
        \|\nabla F_2\|_{L^p} + \|\nabla G_2\|_{L^p} \leq C_p (\|L_2\|_{L^1}+\|M_2\|_{L^1}) &
    \text{ for all } 1\leq p<2,\\
        \|\nabla F_3\|_{L^p} + \|\nabla G_3\|_{L^p} \leq C_p (\|L_3\|_{L^2}+\|M_3\|_{L^2}) &
    \text{ for all } 1\leq p<+\infty,\\
    \|\nabla G_1\|_{L^p} \leq C_p \|M_1\|_{W^{-1,p}} & \text{ for all } 1<p<2.\\ 
    \end{array}
\end{equation}
    
To estimate \eqref{eq:reste1}, we then simply input \eqref{eq:ellipticestim} into \eqref{eq:reste1}  
where we use the H\"older inequality with $j^\natural_s$ estimated in $L^{p'}$ (and all the other 
weights other than $F_i$ or $G_i$ in $L^\infty$). The largest contribution arises from $G_1$ since
$\|j^\natural_s\|_{L^{p'}} \simeq (r^s_\xi)^{-\theta}$ when $p<2$ and the final $\logeps^{-1^-}$ 
bound is obtained by choosing $p$ arbitrarilly close to $1$ (and hence $\theta$ arbitrarilly close to $1$). 
For the terms involving $p>2$ we use the straightforward bound $\|j^\natural_s\|_{L^{p'}} \leq C.$
    
The estimate of \eqref{eq:reste2} is at first sight slightly more difficult since $\partial_{ik}\varphi$ 
is diverging like $\sqrt{\logeps}$ whereas in \eqref{eq:reste1} $\partial_{i}\varphi$ was bounded
in absolute value. On the other hand, the integrand only lives on the support of $D^2\varphi$, which
is both away from the cores and of Lebesgues measure of order $\logeps^{-1}.$ More precisely, for
$F_1$ and $G_1$ we rely exactly on the same estimate as in \eqref{eq:ellipticestim}, whereas, since 
the forcing terms $L_2, L_3, M_2$ and $M_3$ have a support disjoint from that of $D^2\varphi$, it follows
by elliptic regularity that
\begin{equation}\label{eq:ellipcoeur}
\|\nabla F_i\|_{L^\infty({\rm supp}(D^2\varphi))} \leq  C\|\nabla F_i\|_{L^1}, \quad  
\|\nabla G_i\|_{L^\infty({\rm supp}(D^2\varphi))}  \leq  C\|\nabla G_i\|_{L^1}, \quad i=2,3.
\end{equation}
We then combine \eqref{eq:ellipcoeur} with our previous estimate \eqref{eq:ellipticestim}, and therefore
in the H\"older estimate of \eqref{eq:reste2} for these four terms we can take $p=\infty$ and hence
$p'=1.$  Finally, regarding $j^\natural_s$ we have for $p=\infty$ 
$$
\|j^\natural_s\|_{L^{1}({\rm supp}(D^2\varphi))}\leq \|j^\natural_s\|_{L^{\infty}({\rm supp}(D^2\varphi))} \mathcal{L}^2( {\rm supp}(D^2\varphi))) \leq C\rho_{\rm min},
$$ 
and for $p<2$ 
$$
\|j^\natural_s\|_{L^{p'}({\rm supp}(D^2\varphi))}
\leq C (\rho_{\rm min})^{-\theta}
$$ 
where $\frac{1}{p} = \theta + \frac{1-\theta}{2}.$ The conclusion then follows by summation.\hfill\qed

\medskip
\noindent{\bf Proofs of Theorem \ref{thm:main} and Theorem \ref{thm:main_asympt}}
Theorem \ref{thm:main} follows very directly from Proposition \ref{prop:controldiscrep}. Indeed, the iterative
use of Proposition \ref{prop:controldiscrep} leads to a discrete Gronwall inequality which is a forward Euler scheme 
for the corresponding classical (continuous) Gronwall inequality, and the latter has convex solutions which are therefore
greater than their discrete equivalent.  The actual details can be taken almost word for word from the ones used 
in \cite{JeSm3} Proof of Theorem 1.3, and are therefore not repeated here. 

Finally, Theorem \ref{thm:main_asympt} is also easily deduced from Theorem \ref{thm:main}. The only point which 
deserves additional explanation is the fact that in the assumptions of Theorem \ref{thm:main_asympt} only local
norms $\|\cdot\|_{\dot W^{-1,1}(\Omega)}$ with $\Omega$ being of compact closure in the interior of $\H$ are used
whereas the definition of $r_a^0$ for Theorem \ref{thm:main} involves the unbounded set $\Omega_0.$ As the proof
of Proposition \ref{prop:stronglocal} shows (more precisely its Step 6), the closeness estimates expressed in \eqref{eq:dsd1}
(which hold in expanding domains whose union ends up covering the whole of $\H$ as $\eps$ tends to zero) only require 
a first localisation estimate in a neigborhood of size $1/\sqrt{\logeps}$ of the points $a_{i,\eps}$, which is of course
implied by the assumptions of Theorem \ref{thm:main_asympt}. 
\hfill \qed

\numberwithin{thm}{section}
\numberwithin{prop}{section}
\numberwithin{lem}{section}
\numberwithin{rem}{section}
\appendix

\section{Vector potential of loop currents}\label{subsect:canon}

In the introduction we have considered the inhomogeneous Poisson equation
$$
\left\{
\begin{array}{ll}
\displaystyle -{\rm div} \left(\frac{1}{r} \nabla \left( r A_a\right)
\right) =  2\pi\delta_{a} &\qquad \text{in } \H,\\
A_a  = 0 & \qquad \text{on } \H.
\end{array}
\right.
$$  
Its integration is classical (see e.g. \cite{Jackson}) and yields 
$$
A_a(r,z) = \frac{r(a)}{2} \int_0^{2\pi} \frac{\cos(t)}{\sqrt{r(a)^2+r^2 +
(z-z(a))^2-2r(a)r\cos(t)}}\, dt,
$$
which in turn simplifies to
$$
A_a(r,z) = \sqrt{\frac{r(a)}{r}} \frac{1}{k} \left[
(2-k^2)K(k^2)-2E(k^2)\right]
$$
where 
$$
k^2 = \frac{4r(a) r}{ r(a)^2 + r^2 + (z-z(a))^2 + 2 r(a) r}
$$
and where $E$ and $K$ denote the complete elliptic integrals of first and second
kind respectively (see e.g. \cite{Abra}). Note that $A_{\lambda a}(\lambda r,
\lambda z)= A_a(r,z)$ for any $\lambda >0$ and that we have 
the asymptotic expansions \cite{Abra} of the complete elliptic integrals as 
$s\to 1$
:
$$
K(s) = -\frac{1}{2}\log(1-s)\left(1 + \frac{1-s}{4}\right) + \log(4) + O(1-s),
$$
$$
E(s ) = 1 - \log(1-s)\frac{1-s}{4} + O(1-s),
$$
and similarly for their derivatives.  For $(r,z) \in \H \setminus
\{a\},$ direct computations therefore yield  
\begin{equation}\label{eq:Aa}
A_a(r,z) =  \left( \log (\tfrac{r(a)}{\rho}) + 
3 \log(2)
-2\right) +  O\left(\tfrac{\rho}{r(a)}
|\log (\tfrac{\rho}{r(a)})|\right)\qquad \text{as } \tfrac{\rho}{r(a)} \to
0,
 \end{equation}
and
\begin{equation}\label{eq:dAa}
\partial_{\rho} A_a = -\frac{1}{\rho} +  O\left(\tfrac{1}{r(a)}
\right)\qquad \text{as } \tfrac{\rho}{r(a)} \to
0,
\end{equation}
where $\rho := |a-(r,z)|.$

\noindent
Concerning the asymptotic close to $r=0,$ we have
\begin{equation}\label{eq:Aaz}
A_a(r,z) \simeq \frac{rr(a)^2}{r(a)^3+|z|^3} \qquad \text{as } \frac{r}{r(a)} 
\to 0. 
\end{equation}

\subsection{Singular unimodular maps}\label{appendice:canon}
When $a = \{a_1,\cdots, a_n\}$ is a family of $n$ distinct 
points 
in
$\H$, we define the function $\Psi^*_a$ on
$\H_{a}:=\H\setminus a$ by
$$
\Psi^*_a = \sum_{i=1}^n A_{a_i},
$$
so that
$$
\left\{
\begin{array}{ll}
\displaystyle -{\rm div} \left( \frac{1}{r} \nabla (r \Psi^*_a)\right) = 2\pi
\sum_{i=1}^n
\delta_{a_i} &\qquad \text{on } \H,\\
\displaystyle \Psi^*_a = 0 & \qquad \text{on } \partial\H.
\end{array}
\right.
$$  
Up to a constant phase shift, there exists a unique unimodular map $u^*_a 
\in \mathcal{C}^\infty(\H_a,S^1)\cap W^{1,1}_{\rm
loc}(\H,S^1)$ such that
$$
r (iu^*_a,\nabla u^*_a) = rj(u^*_a) = -\nabla^\perp (r\Psi^*_a).
$$   
In the sense of distributions in $\H$, we have
$$
\left\{
\begin{array}{ll}   
\displaystyle {\rm div}(rj(u^*_a)) & = 0\\
\displaystyle {\rm curl}(j(u^*_a)) & = 2\pi \sum_{i=1}^n \delta_{a_i}.
\end{array}
\right.
$$
Let 
$$
 \rho_a :=\frac14\min\Big( \min_{i\neq j}|a_i-a_j|, \min_i 
r(a_i)\Big),
$$ 
for $\rho\leq \rho_a$ we set 
$$\H_{a,\rho} :=
\H \setminus \cup_{i=1}^n B(a_i,\rho).
$$
\begin{lem}\label{lem:hors_disque0}
Under the above assumptions we have
$$
\int_{\H_{a,\rho}} \frac{|j(u^*_a)|^2}{2} r\,drdz = \pi \sum_{i=1}^n r(a_i) 
\Big[\log\big(\tfrac{r(a_i)}{\rho}\big)+
\sum_{j\neq i}A_{a_j}(a_i) +
\big(3\log(2)-2\big) + 
O\big(\tfrac{\rho}{\rho_a}\log^2(\tfrac{\rho}{\rho_a}) \big)\Big]. 
$$
\end{lem}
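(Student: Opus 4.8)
The plan is to turn the integral over $\H_{a,\rho}$ into a sum of boundary integrals over the circles $\partial B(a_i,\rho)$ and to evaluate each of them with the asymptotic expansions of Appendix A.

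First I would use the defining relation $rj(u^*_a)=-\nabla^\perp(r\Psi^*_a)$, which gives the pointwise equality $r|j(u^*_a)|^2=\tfrac1r|\nabla\psi|^2$ for $\psi:=r\Psi^*_a$. Since $-\mathrm{div}\big(\tfrac1r\nabla\psi\big)=2\pi\sum_i\delta_{a_i}$ carries no mass on $\H_{a,\rho}$, Green's identity on $\H_{a,\rho}$ gives
\[
\int_{\H_{a,\rho}}\tfrac1r|\nabla\psi|^2\,drdz=\int_{\partial\H_{a,\rho}}\psi\,\tfrac1r\,\partial_n\psi\,d\mathcal{H}^1,
\]
$n$ being the outward unit normal. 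The part of $\partial\H_{a,\rho}$ on $\partial\H=\{r=0\}$ contributes nothing, because $\psi\equiv0$ there (with $\psi=O(r^2)$, $\tfrac1r\partial_n\psi=\partial_r\Psi^*_a+\Psi^*_a/r$ bounded, by \eqref{eq:Aaz}); the contribution at spatial infinity vanishes since the explicit elliptic–integral formula for $A_a$ gives $A_a=O\big(r\,|(r,z)|^{-3}\big)$, so that $\psi\,\tfrac1r\partial_n\psi$ times the perimeter tends to $0$ along large circles. Hence only the circles $\partial B(a_i,\rho)$ remain, with $n$ pointing towards $a_i$, i.e. $\partial_n=-\partial_{\rho'}$ in the local polar frame $(\rho',\theta)$ centred at $a_i$; since $\psi/r=\Psi^*_a$ and $\partial_{\rho'}r=\cos\theta$ on $\partial B(a_i,\rho)$,
\[
\int_{\H_{a,\rho}}\tfrac{|j(u^*_a)|^2}{2}\,r\,drdz=\tfrac12\sum_{i=1}^n\int_{\partial B(a_i,\rho)}\big[-\cos\theta\,(\Psi^*_a)^2-r\,\Psi^*_a\,\partial_{\rho'}\Psi^*_a\big]\,d\mathcal{H}^1.
\]

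Next, on $\partial B(a_i,\rho)$ I would insert $\Psi^*_a=A_{a_i}+\sum_{j\ne i}A_{a_j}$ together with the expansions \eqref{eq:Aa}--\eqref{eq:dAa} for $A_{a_i}$ and $\partial_{\rho'}A_{a_i}$ at $\rho'=\rho$, the Taylor expansion of each $A_{a_j}$, $j\ne i$, around $a_i$ (using that $|a_i-a_j|\ge4\rho_a$ forces $|\nabla A_{a_j}|\le C/\rho_a$ and $|D^2A_{a_j}|\le C/\rho_a^2$ on $B(a_i,\rho)$), and $r=r(a_i)+\rho\cos\theta$. Writing $\Psi^*_a|_{\partial B(a_i,\rho)}=L_i+e$ with $L_i:=\log\tfrac{r(a_i)}{\rho}+3\log2-2+\sum_{j\ne i}A_{a_j}(a_i)$ being $\theta$–independent and $e=O\big(\tfrac{\rho}{\rho_a}|\log\tfrac{\rho}{\rho_a}|\big)$, and $\partial_{\rho'}\Psi^*_a=-\tfrac1\rho+O(1/\rho_a)$, one integrates in $\theta\in[0,2\pi]$: all $\theta$–odd terms drop, the leading term $\tfrac{r(a_i)L_i}{\rho}$ integrates to $2\pi r(a_i)L_i$, and everything else is a remainder. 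Summing over $i$ and dividing by $2$ yields $\pi\sum_i r(a_i)L_i$ plus error, which is the asserted formula.

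The real work, and the point to be careful about, is controlling the remainder, i.e. checking that each discarded term is $O\big(r(a_i)\tfrac{\rho}{\rho_a}\log^2\tfrac{\rho}{\rho_a}\big)$. The largest contributions are $\tfrac{r(a_i)}{\rho}\int_0^{2\pi}e\,d\theta=O\big(r(a_i)\tfrac{\rho}{\rho_a}|\log\tfrac{\rho}{\rho_a}|\big)$ and the cross–terms between the logarithmically large factor $\log\tfrac{r(a_i)}{\rho}$ and the $O(\rho/\rho_a)$ corrections to $\Psi^*_a$ — those corrections coming both from $\nabla A_{a_j}(a_i)$ and from the $O\big(\tfrac{\rho}{r(a_i)}|\log\tfrac{\rho}{r(a_i)}|\big)$ error in \eqref{eq:Aa}. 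The key structural observation is that the first–order, $\theta$–odd parts of these corrections integrate to zero exactly; only their even interaction with $\partial_{\rho'}r=\cos\theta$ (a $\cos^2\theta$) survives, and after multiplication by the line element $\rho\,d\theta$ it is bounded by $\rho\cdot\tfrac{\rho}{\rho_a}|\log\tfrac{\rho}{\rho_a}|^2\le r(a_i)\tfrac{\rho}{\rho_a}\log^2\tfrac{\rho}{\rho_a}$, using $\rho\le\rho_a\le\tfrac14r(a_i)$ (which also lets one absorb all $\rho/r(a_i)$–errors into $\rho/\rho_a$–errors). No new idea beyond these elementary estimates, the exact vanishing of odd integrands, and the Appendix~A asymptotics is needed.
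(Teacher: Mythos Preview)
Your proof is correct and follows essentially the same route as the paper: convert the Dirichlet integral of $\psi=r\Psi^*_a$ into boundary integrals via Green's identity, then evaluate on each $\partial B(a_i,\rho)$ using the expansions \eqref{eq:Aa}--\eqref{eq:dAa}. The paper organises the boundary computation as a triple sum $\sum_{i,j,k}\int_{\partial B(a_i,\rho)}A_{a_j}\nabla(rA_{a_k})\cdot\vec n$ and treats the cases $i=j=k$, $i=k\ne j$, $i\ne k$ separately, whereas you lump everything into $\Psi^*_a=L_i+e$; these are the same estimates in different bookkeeping. One small slip: in ``$\tfrac{r(a_i)}{\rho}\int_0^{2\pi}e\,d\theta=O\big(r(a_i)\tfrac{\rho}{\rho_a}|\log\tfrac{\rho}{\rho_a}|\big)$'' you have dropped the line element $\rho\,d\theta$ on the left; with it included the stated bound is correct.
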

\begin{proof}
 We have the pointwise equality
 $$
 |j(u^*_a)|^2 r = \frac{1}{r}|\nabla^\perp (r\Psi^*_a)|^2 = 
\frac{1}{r}|\nabla (r\Psi^*_a)|^2,
 $$
 so that after integration by parts
 $$
 \int_{\H_{a,\rho}} \frac{|j(u^*_a)|^2}{2} r\,drdz = - 
\frac12 \int_{\H_{a,\rho}} {\rm 
div}\left(\frac{1}{r}\nabla(r\Psi^*_a)\right)r\Psi^*_a \,drdz + 
\frac12 \int_{\partial 
\H_{a,\rho}}  \Psi^*_a \nabla(r\Psi^*_a)\cdot \vec n, 
 $$
 and the first integral of the right-hand side in the previous identity 
vanishes by definition of $\Psi^*_a$ and $\H_{a,\rho}.$ We next decompose 
the
boundary integral as
$$
\frac12 \int_{\partial 
\H_{a,\rho}}  \Psi^*_a \nabla(r\Psi^*_a)\cdot \vec n = 
\frac12 \sum_{i,j,k=1}^n \int_{\partial B(a_i,\rho)} A_{a_j} 
\nabla(rA_{a_k})\cdot \vec 
n,
$$
and for fixed $i,j,k$ we write
$$
A_{a_j} \nabla(rA_{a_k})\cdot \vec 
n =  \left(-A_{a_j} 
\partial_\rho A_{a_k} r  + A_{a_j}A_{a_k}n_r\right).  
$$
Using \eqref{eq:Aa}, we have 
$$
|\int_{\partial B(a_i,\rho)} A_{a_j}A_{a_k}n_r | \leq 
r(a_i)O\Big(\tfrac{\rho}{\rho_a}\log^2(\tfrac{\rho}{\rho_a})\Big).
$$
When $i=j=k,$ we have by \eqref{eq:Aa} and \eqref{eq:dAa}
$$
-\frac12 \int_{\partial B(a_i,\rho)} A_{a_j} 
\partial_\rho A_{a_k} r = \pi r(a_i) \left( \log(\frac{r(a_i)}{\rho}) + 
3\log(2) -2 + 
O\Big(\tfrac{\rho}{\rho_a}\log(\tfrac{\rho}{\rho_a})\Big)\right) 
$$
while when $i=k\neq j$ we have
$$
-\frac12 \int_{\partial B(a_i,\rho)} A_{a_j} 
\partial_\rho A_{a_k} r = \pi r(a_i) \left( A_{a_j}(a_i)  + 
O\Big(\tfrac{\rho}{\rho_a}\Big)\right).
$$
Finally, when $i\neq k$ we have
$$
\big|\frac12 \int_{\partial B(a_i,\rho)} A_{a_j} 
\partial_\rho A_{a_k} r \big| \leq r(a_i) 
O\Big((\tfrac{\rho}{\rho_a})^2 
\log(\tfrac{\rho}{\rho_a})\Big).
$$
The conclusion follows by summation.
\end{proof}

If we next fix some constant $K_0>0$ and we assume that 
the points $a_i$ are
of the form  
$$
a_{i} := \Big(r_0+ \frac{r(b_i)}{\sqrt{\logeps}} , 
z_0 + \frac{z(b_i)}{\sqrt{\logeps}} \Big),\qquad i=1,\cdots,n,
$$
for some $r_0>0$, $z_0\in \R$ and $n$ points
$\{b_1,\cdots,b_n\} \in \R^2$ which satisfy 
$$
\max_i |b_i| \leq K_0,\qquad
\text{and}\qquad \min_{i\neq j} {\rm dist}(b_i,b_j)\geq \frac{1}{K_0},
$$
we directly deduce from Lemma \ref{lem:hors_disque0}, \eqref{eq:Aa} and 
\eqref{eq:dAa} :
\begin{lem}\label{lem:sanscoeur}
Under the above assumptions we have
\begin{equation*}
\begin{split}
\int_{\H_{a,\rho}} \frac{|j(u^*_a )|^2}{2} r\,drdz =&\ \pi n r_0 \Big(
|\log\rho| + n\log r_0 + n\big(3\log(2)-2\big)
+ \tfrac{n-1}{2}\log|\log\eps|\Big) \\
&+  \pi r_0 \Big( \sum_{i} \frac{r(b_i)}{r_0}\frac{|\log 
\rho|}{\sqrt{|\log\eps|}} -  \sum_{i\neq j} 
\log |b_i-b_j|\Big)\\
&+ O_{K_0,r_0} \Big( \frac{1}{\sqrt{|\log\eps|}}\Big).
\end{split}
\end{equation*}
\end{lem}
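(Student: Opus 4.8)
The plan is to deduce Lemma \ref{lem:sanscoeur} from Lemma \ref{lem:hors_disque0} by inserting the explicit points $a_i = \big(r_0 + r(b_i)/\sqrt{\logeps},\, z_0 + z(b_i)/\sqrt{\logeps}\big)$ and Taylor-expanding everything in powers of $1/\sqrt{\logeps}$. First I would record the elementary consequences of $\max_i|b_i|\leq K_0$ and $\min_{i\ne j}{\rm dist}(b_i,b_j)\geq 1/K_0$: for $\eps$ small one has $r_0/2 \leq r(a_i) \leq 2r_0$ and $|a_i-a_j| = |b_i-b_j|/\sqrt{\logeps} \in \big[\tfrac{1}{K_0\sqrt{\logeps}},\, \tfrac{2K_0}{\sqrt{\logeps}}\big]$, so $\rho_a \asymp 1/\sqrt{\logeps}$ with the minimum in the definition of $\rho_a$ realised by the interpoint distances; in particular $\rho_a < 1$, hence $|\log\rho| = -\log\rho \geq 0$ for every admissible $\rho$, and the remainder $O\big(\tfrac{\rho}{\rho_a}\log^2(\tfrac{\rho}{\rho_a})\big)$ in Lemma \ref{lem:hors_disque0} is $O_{K_0,r_0}(1/\sqrt{\logeps})$ on the relevant range of $\rho$ (it is in fact negligible when, as in the application, $\rho$ is a positive power of $\eps$).

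Next I would evaluate the interaction terms. Since $|a_i-a_j|/r(a_j) \asymp 1/\sqrt{\logeps} \to 0$, estimate \eqref{eq:Aa} gives, for $i\ne j$,
$$A_{a_j}(a_i) = \log\frac{r(a_j)}{|a_i-a_j|} + 3\log 2 - 2 + O_{K_0,r_0}\!\big(1/\sqrt{\logeps}\big) = \log r(a_j) - \log|b_i-b_j| + \tfrac{1}{2}\log\logeps + 3\log 2 - 2 + O_{K_0,r_0}\!\big(1/\sqrt{\logeps}\big),$$
using $|a_i-a_j| = |b_i-b_j|/\sqrt{\logeps}$. Since $\log r(a_j) = \log r_0 + \log\big(1 + \tfrac{r(b_j)}{r_0\sqrt{\logeps}}\big) = \log r_0 + O_{K_0,r_0}(1/\sqrt{\logeps})$, and $r(a_i) = r_0 + O_{K_0,r_0}(1/\sqrt{\logeps})$, and since replacing a weight $r(a_i)$ by $r_0$ in a product with a bounded coefficient costs only $O_{K_0,r_0}(1/\sqrt{\logeps})$, summation yields
$$\pi\sum_i r(a_i)\sum_{j\ne i}A_{a_j}(a_i) = \pi n(n-1)r_0\big(\log r_0 + \tfrac{1}{2}\log\logeps + 3\log 2 - 2\big) - \pi r_0\sum_{i\ne j}\log|b_i-b_j| + O_{K_0,r_0}(1/\sqrt{\logeps}).$$

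For the diagonal part I would expand $\pi r(a_i)\log\tfrac{r(a_i)}{\rho} = \pi\big(r_0 + \tfrac{r(b_i)}{\sqrt{\logeps}}\big)\big(|\log\rho| + \log r(a_i)\big)$ and track which products survive: $r_0\cdot|\log\rho|$ gives the leading $\pi n r_0 |\log\rho|$; $\tfrac{r(b_i)}{\sqrt{\logeps}}\cdot|\log\rho|$ gives exactly $\pi r_0 \sum_i \tfrac{r(b_i)}{r_0}\tfrac{|\log\rho|}{\sqrt{\logeps}}$ (the one ``small $\times$ large'' product that must be kept); $r_0\cdot\log r(a_i)$ gives $\pi n r_0\log r_0 + O_{K_0,r_0}(1/\sqrt{\logeps})$; and $\tfrac{r(b_i)}{\sqrt{\logeps}}\cdot\log r(a_i)$ is $O_{K_0,r_0}(1/\sqrt{\logeps})$. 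Adding $\pi\sum_i r(a_i)(3\log 2 - 2) = \pi n r_0(3\log 2 - 2) + O_{K_0,r_0}(1/\sqrt{\logeps})$ and combining with the interaction terms, the $\log r_0$ contributions add to $\pi n r_0\cdot n\log r_0$, the $3\log 2 - 2$ contributions to $\pi n r_0 \cdot n(3\log 2 - 2)$, the $\log\logeps$ contributions to $\pi n r_0\cdot\tfrac{n-1}{2}\log\logeps$, and what is left is $\pi r_0\sum_i\tfrac{r(b_i)}{r_0}\tfrac{|\log\rho|}{\sqrt{\logeps}} - \pi r_0\sum_{i\ne j}\log|b_i-b_j|$ — exactly the stated formula.

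The step I expect to need the most care — the ``main obstacle'', modest as it is — is precisely this order bookkeeping in the diagonal term: one must notice that $|\log\rho|$ may be as large as a fixed multiple of $\logeps$, so that the $O(1/\sqrt{\logeps})$ correction to the weight $r(a_i)$, once multiplied by $|\log\rho|$, survives as a genuine $\sqrt{\logeps}$-order term in the final answer, whereas every other product of a factor of size $1/\sqrt{\logeps}$ with a bounded quantity is negligible. Once that dichotomy is isolated, the remainder of the argument is routine: first-order expansion of $\log(1+x)$ and reindexing of finite sums.
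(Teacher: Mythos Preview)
Your proposal is correct and is exactly the approach the paper indicates: the authors simply state that Lemma~\ref{lem:sanscoeur} is deduced directly from Lemma~\ref{lem:hors_disque0} together with \eqref{eq:Aa}--\eqref{eq:dAa}, and you have carried out precisely that substitution and expansion. Your identification of the one ``small $\times$ large'' cross term $\tfrac{r(b_i)}{\sqrt{\logeps}}\cdot|\log\rho|$ as the only survivor among the correction products is the right bookkeeping point; the rest is, as you say, routine.
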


\section{Jacobian and Excess for 2D Ginzburg-Landau functional}
\label{sect:B}

For the ease of reading, we recall in this appendix a few results from 
\cite{JeSo}, \cite{JeSp} and \cite{JeSp2} which we use in our work.

\begin{thm}[Thm 1.3 in \cite{JeSp} - {\bf Lower energy 
bound}]\label{thm:B.1}
There exists an absolute constant $C>0$ such that for any $u \in 
H^1(B_r,\C)$ satisfying $\|Ju-\pi\delta_0\|_{\dot W^{-1,1}(B_r)} < r/4$ we 
have
$$
\E(u,B_r) \geq \pi \log\frac{r}{\eps} + \gamma -\frac{C}{r} \left( 
\eps\sqrt{\log \tfrac{r}{\eps}} + \|Ju-\pi\delta_0\|_{\dot 
W^{-1,1}(B_r)}\right).
$$
\end{thm}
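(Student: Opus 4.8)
The plan is to reduce to the model case $r=1$ by scaling, and then to peel off the ``bulk'' logarithm from the core constant $\gamma$, controlling every error by $\eps\sqrt{\logeps}+\alpha$, where we abbreviate $\alpha:=r^{-1}\|Ju-\pi\delta_0\|_{\dot W^{-1,1}(B_r)}$. First I would dilate by $v(x):=u(rx)$: then $\mathcal E_\eps(u,B_r)=\mathcal E_{\eps/r}(v,B_1)$, the Jacobian measure is dilation invariant, and $\|Jv-\pi\delta_0\|_{\dot W^{-1,1}(B_1)}=\alpha<\tfrac14$. Writing again $\eps$ for $\eps/r$, it then suffices to prove $\E(v,B_1)\geq\pi\logeps+\gamma-C(\eps\sqrt{\logeps}+\alpha)$. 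If $\E(v,B_1)>\pi\logeps+\gamma+1$ there is nothing to prove, so I would work under the energy ceiling $\E(v,B_1)\leq\pi\logeps+\gamma+1$ (the range of $\eps$ bounded away from $0$ is covered by the crude bound $\E(v,B_1)\geq\pi\logeps-C$ coming from any vortex-ball construction, after enlarging $C$).

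Under this ceiling, the vortex-ball construction and the accompanying Jacobian estimates (cf.\ \cite{JeSo,JeSp}) produce a single point $a_*$ and a scale $\lambda_\eps\sim\eps\sqrt{\logeps}$ such that $|v|\geq\tfrac12$ on $B_1\setminus B(a_*,\lambda_\eps)$, the map $v$ has degree $1$ on $\partial B(a_*,\rho)$ for every $\rho\in[\lambda_\eps,1-|a_*|)$, the full vorticity of $v$ lies in $B(a_*,\lambda_\eps)$, and $\|Jv-\pi\delta_{a_*}\|_{\dot W^{-1,1}(B_1)}\leq C\eps\logeps$; comparing the last bound with the hypothesis and using $\|\delta_{a_*}-\delta_0\|_{\dot W^{-1,1}(B_1)}=|a_*|$ gives $|a_*|\leq\tfrac{\alpha}{\pi}+C\eps\logeps<\tfrac15$. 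I would then fix the intermediate scale $\lambda:=\logeps^{-1}$ and split $\E(v,B_1)=\E(v,B_1\setminus B(a_*,\lambda))+\E(v,B(a_*,\lambda))$. On the outer region, discarding the nonnegative modulus energy $e_\eps(|v|)$ and applying the standard annular lower bound on the circles $\partial B(a_*,\rho)$ --- using $\int_{\partial B(a_*,\rho)}j(v)\cdot\tau=2\int_{B(a_*,\rho)}Jv=2\pi+O(\eps\logeps)$ for $\rho\geq\lambda_\eps$, and the fact that $\int_{\partial B(a_*,\rho)}(1-|v|^2)^2$ is small for most $\rho$ by Fubini --- then integrating over $\rho\in(\lambda,1-|a_*|)$ yields $\E(v,B_1\setminus B(a_*,\lambda))\geq\pi\log\tfrac{1-|a_*|}{\lambda}-C(\eps\sqrt{\logeps}+\alpha)\geq\pi\log\tfrac1\lambda-C(\eps\sqrt{\logeps}+\alpha)$.

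For the core term I would rescale $B(a_*,\lambda)$ onto $B_1$ with parameter $\tilde\eps:=\eps/\lambda=\eps\logeps$: the rescaled map $\tilde v$ has degree $1$, and on a well-chosen radius $\rho_*\in(\tfrac12,1)$ with $\int_{\partial B_{\rho_*}}e_{\tilde\eps}(\tilde v)\leq C$ its trace is, by the energy ceiling and the smallness of $\int(1-|\tilde v|^2)^2$, at distance $O(\eps\sqrt{\logeps}+\alpha)$ in $H^{1/2}$ from a rotation of $e^{i\theta}$; correcting $\tilde v$ in a thin collar to make the boundary datum exactly $e^{i\theta}$ costs energy $O(\eps\sqrt{\logeps}+\alpha)$, so by the very definition of $\gamma$ in \cite{BBH} one obtains $\mathcal E_{\tilde\eps}(\tilde v,B_1)\geq\pi|\!\log\tilde\eps|+\gamma-C(\eps\sqrt{\logeps}+\alpha)$, that is $\E(v,B(a_*,\lambda))\geq\pi\log\tfrac\lambda\eps+\gamma-C(\eps\sqrt{\logeps}+\alpha)$. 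Adding the outer and core estimates and recalling that $\eps$ stood for $\eps/r$ produces the asserted inequality, with an absolute constant.

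The step I expect to be the main obstacle is this last one: upgrading the \emph{qualitative} identity $\min\{\mathcal E_{\tilde\eps}(\,\cdot\,,B_1):\ \cdot=e^{i\theta}\text{ on }\partial B_1\}=\pi|\!\log\tilde\eps|+\gamma+o(1)$ of \cite{BBH} to a lower bound with an \emph{explicit} remainder of size $\eps\sqrt{\logeps}+\alpha$, valid for competitors whose boundary data is only approximately $e^{i\theta}$; this is what forces the precise choice of core scale $\sim\eps\sqrt{\logeps}$ (so that $\tilde\eps$ is still small while the collar correction stays within budget) together with a careful quantitative extension lemma on the collar. A secondary technical point is ensuring that the slicing error on the outer annulus is genuinely of order $\eps\sqrt{\logeps}$ despite the $1/\rho$ weight in the $\rho$-integration, which is precisely where the $O(\eps\logeps)$ concentration of the Jacobian and the good-radius selection are used.
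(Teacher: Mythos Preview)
The paper does not prove this statement at all: Theorem~\ref{thm:B.1} appears in Appendix~\ref{sect:B}, whose opening line reads ``For the ease of reading, we recall in this appendix a few results from \cite{JeSo}, \cite{JeSp} and \cite{JeSp2} which we use in our work.'' The theorem is quoted verbatim as Theorem~1.3 of Jerrard--Spirn \cite{JeSp} and used as a black box (in Steps~1 and~4 of the proof of Proposition~\ref{prop:stronglocal}), so there is no in-paper proof to compare your proposal against.

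As for the proposal itself as a sketch of how \cite{JeSp} establishes the result: your outline (reduce to $r=1$ by scaling, impose an energy ceiling, locate a single vortex $a_*$ with $|a_*|\lesssim \alpha+\eps\logeps$ via Jacobian estimates, split at an intermediate scale, use an annular lower bound outside and the definition of $\gamma$ inside) is broadly in the right spirit, and you correctly flag the quantitative core estimate as the crux. One point to be careful about is your collar argument: you claim the boundary trace on a good circle is $O(\eps\sqrt{\logeps}+\alpha)$-close in $H^{1/2}$ to $e^{i\theta}$, but the energy ceiling $\E\le\pi\logeps+\gamma+1$ together with a good-radius selection only gives $O(1)$ control on the circle energy, not smallness; turning this into an $H^{1/2}$-closeness of the stated order requires more than you have written (in \cite{JeSp} the sharp constant $\gamma$ with the precise error is obtained by a different mechanism, not by a naive collar modification). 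If you want the actual argument, consult \cite{JeSp} directly.
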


\begin{thm}[from Thm 1.1 in \cite{JeSp} - {\bf Jacobian estimate without 
vortices}]\label{thm:B.1bis}
There exists an absolute constant $C>0$ with the following property. If  
$\Omega$ is a bounded domain, $u \in H^1(\Omega,\C)$,  
$\eps \in (0,1]$ and $\E(u,\Omega)< \pi \logeps$, then 
$$
\Big\| Ju \Big\|_{\dot W^{-1,1}(\Omega)} \leq  \eps C \E(u,\Omega)\exp\Big( 
\frac{1}{\pi}\E(u,\Omega)\Big). 
$$
\end{thm}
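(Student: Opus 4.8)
The estimate is recalled from \cite{JeSp}, so one may simply cite it; the strategy I would follow to prove it is as follows. Fix $\varphi\in W^{1,\infty}_0(\Omega)$ with $\|\nabla\varphi\|_\infty\le 1$; the goal is to bound $\int_\Omega Ju\,\varphi$ by $C\eps\,\E(u,\Omega)\,e^{\E(u,\Omega)/\pi}$. Since $Ju$ can only be ``large'' where $|u|$ is small, the first step is to run the Ginzburg--Landau ball construction (Sandier/Jerrard) at scale $\eps$ inside $\Omega$, growing and merging the balls maximally until they reach $\partial\Omega$ or the energy budget is exhausted: this produces finitely many pairwise disjoint closed balls $B_i=\overline{B(x_i,\rho_i)}\subset\Omega$ whose union contains $\{|u|\le 1/2\}$, with lower energy bounds $\E(u,B_i)\ge\pi|d_i|\big(\log(\rho_i/\eps)-C\big)$, where $d_i:=\deg(u,\partial B_i)$. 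Summing these and using $\E(u,\Omega)<\pi\logeps$ forces, for every $i$ with $d_i\neq 0$, that $\rho_i\le C\eps\,e^{\E(u,\Omega)/\pi}$, and the maximality of the construction upgrades this to the quantitative output $\sum_i(1+|d_i|)\rho_i\le C\eps\,e^{\E(u,\Omega)/\pi}$. Along the way I would also select, by a Fubini/coarea argument on the annuli $B(x_i,2\rho_i)\setminus B(x_i,\rho_i)$, radii for which $|u|>1/2$ on $\partial B_i$, the restriction $u/|u|$ lies in $H^1(\partial B_i,S^1)$, and $\int_{\partial B_i}e_\eps(u)\le C\rho_i^{-1}\E(u,B(x_i,2\rho_i))$.

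On the good region $\mathcal O:=\Omega\setminus\bigcup_i B_i$ one has $|u|\ge 1/2$, hence $u/|u|\in H^1(\mathcal O,S^1)$ and, pointwise, $j(u)=|u|^2\,j(u/|u|)$ with $\operatorname{curl}\big(j(u/|u|)\big)=0$ in $\mathcal D'(\mathcal O)$; consequently $2Ju=\operatorname{curl}\big((|u|^2-1)\,j(u/|u|)\big)$ there. Writing $\int_\Omega Ju\,\varphi=\int_{\mathcal O}Ju\,\varphi+\sum_i\int_{B_i}Ju\,\varphi$ and integrating by parts on each piece, the interior $\mathcal O$-term is $-\tfrac12\int_{\mathcal O}(|u|^2-1)\,j(u/|u|)\cdot\nabla^\perp\varphi$, bounded by $C\eps\,\E(u,\Omega)$ via Cauchy--Schwarz together with $\|1-|u|^2\|_{L^2}\le 2\eps\,\E(u,\Omega)^{1/2}$ (from the potential term) and $\|j(u/|u|)\|_{L^2(\mathcal O)}\le 2\|\nabla u\|_{L^2}$; the integrals $\int_{B_i}j(u)\cdot\nabla^\perp\varphi$ are controlled by $C\rho_i\,\E(u,B_i)^{1/2}$; and the remaining boundary terms on the circles $\partial B_i$ reduce, once the tangential integrations cancel the $\oint_{\partial B_i}j(u)\cdot\tau\,\varphi$ pieces coming from the two sides of $\partial B_i$, to $\oint_{\partial B_i}\big(j(u/|u|)\cdot\tau\big)\varphi$. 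Writing the phase of $u/|u|$ on $\partial B_i$ as $d_i\theta_i+\tilde\phi_i$ with $\tilde\phi_i$ single-valued, this equals $2\pi d_i\,\overline\varphi_i-\oint_{\partial B_i}\tilde\phi_i\,\partial_\tau\varphi$, where $\overline\varphi_i$ denotes the mean of $\varphi$ on $\partial B_i$: the oscillation term is $\le C\rho_i\big(\E(u,B(x_i,2\rho_i))^{1/2}+|d_i|\big)$ by Poincar\'e on the circle and the good-radius bound, while $|\overline\varphi_i|\le C\rho_i$ because $\varphi$ vanishes at the point where $B_i$ meets $\partial\Omega$ (using $\|\nabla\varphi\|_\infty\le1$ and the linear growth of $\varphi$ away from $\partial\Omega$). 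Summing over $i$ and invoking $\sum_i(1+|d_i|)\rho_i\le C\eps\,e^{\E(u,\Omega)/\pi}$ and $\sum_i\E(u,B_i)\le\E(u,\Omega)$ gives $|\int_\Omega Ju\,\varphi|\le C\eps\,\E(u,\Omega)\,e^{\E(u,\Omega)/\pi}$, which is the claim since $\varphi$ was arbitrary.

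The main obstacle is exactly the bookkeeping around the vortex balls: the argument must be organized so that the a priori large quantities $\pi d_i\varphi(x_i)$ never occur in isolation but only through $d_i\,\overline\varphi_i$ and oscillation terms, each of which carries a factor $\rho_i$, and one must verify that the maximality of the grown balls together with the subcriticality $\E(u,\Omega)<\pi\logeps$ really does deliver $\sum_i|d_i|\rho_i\le C\eps\,e^{\E(u,\Omega)/\pi}$. This is the step that produces the exponential factor in the statement — it originates from the ball-growth law $\rho_i\sim\eps\,e^{\E_i/(\pi d_i)}$ — and it is where the hypothesis $\E<\pi\logeps$ is genuinely used, precisely so that $e^{\E/\pi}\le\eps^{-1}$ and the right-hand side stays $O(\E(u,\Omega))$ even in the borderline regime. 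Everything else — the $L^2$ smallness of $1-|u|^2$, the elementary pointwise identities for $j(u)$ and $Ju$, and the selection of good radii — is routine.
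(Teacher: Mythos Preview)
The paper gives no proof of this statement: it is simply recalled from \cite{JeSp} in Appendix~B, so there is nothing to compare against. Your outline follows the standard ball-construction route underlying such Jacobian estimates and is essentially correct.

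One step deserves sharpening. You write that $|\overline\varphi_i|\le C\rho_i$ ``because $\varphi$ vanishes at the point where $B_i$ meets $\partial\Omega$'', but a ball produced by the construction and sitting strictly inside $\Omega$ need not literally touch $\partial\Omega$; ``maximality'' of the growth does not by itself force contact. What actually makes the degree terms small is this: if $d_i\neq 0$, then applying the annular lower bound on concentric circles out to radius $\mathrm{dist}(x_i,\partial\Omega)$ (merging with any $B_j$ encountered along the way, so that the running degree is well defined) yields
\[
\pi\,\Bigl(\sum_{j\text{ absorbed}}|d_j|\Bigr)\,\log\!\frac{\mathrm{dist}(x_i,\partial\Omega)}{C\eps}\ \le\ \E(u,\Omega),
\]
hence $\mathrm{dist}(x_i,\partial\Omega)\le C\eps\,e^{\E(u,\Omega)/\pi}$. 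Combined with $\varphi|_{\partial\Omega}=0$ and $\|\nabla\varphi\|_\infty\le 1$ this gives $|\varphi(x_i)|\le C\eps\,e^{\E/\pi}$ and therefore $|\overline\varphi_i|\le C\eps\,e^{\E/\pi}$, which is what you need. This is precisely where the hypothesis $\E<\pi\logeps$ and the exponential factor enter, and it is worth stating it this way rather than through the (not-always-true) geometric claim that $B_i$ touches $\partial\Omega$. With that adjustment your sketch is sound.
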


\begin{thm}[Thm 2.1 in \cite{JeSo} - {\bf Jacobian estimate 
with vortices}]\label{thm:B.1ter} There exists an absolute constant $C>0$ with 
the following property. If  
$\Omega$ is a bounded domain, $u \in H^1(\Omega,
\C)$, and $\varphi \in 
\mathcal{C}^{0,1}_c(\Omega)$, then for any $\lambda \in (1,2]$ and any $\eps 
\in (0,1)$,
$$
\Big| \int_\Omega \varphi Ju \, dx \Big| \leq \pi d_\lambda \|\varphi\|_\infty 
+ \|\varphi\|_{\mathcal{C}^{0,1}}h^\eps(\varphi,u,\lambda)
$$
where
$$
d_\lambda = \Big\lfloor \frac{\lambda}{\pi} \frac{\E(u,{\rm spt}(\varphi))}{\logeps} 
\Big\rfloor,
$$
$\lfloor x \rfloor$ denotes the greatest integer less than or equal to $x$, and
$$
h^\eps(\varphi,u,\lambda) \leq C 
\eps^\frac{\lambda-1}{12\lambda}\big(1+\frac{\E(u,{\rm spt}(\varphi))}{\logeps}\big) 
\big(1+\mathcal{L}^2({\rm spt}(\varphi)\big).
$$
\end{thm}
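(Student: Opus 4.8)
The plan is to follow the standard route for such Jacobian estimates (this is essentially the cited Jerrard--Soner theorem), combining the vortex ball construction of Sandier/Jerrard with the degree formula. Write $E:=\E(u,{\rm spt}\,\varphi)$ and fix $\lambda\in(1,2]$. First I would run the vortex ball construction for $u$ restricted to a neighborhood of $\{|u|\le\tfrac12\}$ inside ${\rm spt}\,\varphi$, with \emph{final total radius} $\rho=\eps^{\frac{\lambda-1}{\lambda}}$. This produces finitely many pairwise disjoint closed balls $B_j=B(a_j,r_j)\subset\Omega$ with $\{|u|\le\tfrac12\}\cap{\rm spt}\,\varphi\subset\bigcup_jB_j$, $\sum_jr_j=\rho$, $|u|\ge\tfrac12$ on each $\partial B_j$ (choosing the radii in a good set of slices, which also bounds $\int_{\partial B_j}|\nabla u|\le C E_j^{1/2}$ with $E_j$ the energy on the surrounding annulus), and a merged lower bound $\E(u,\bigcup_jB_j)\ge\pi D\big(\log\tfrac{\rho}{\eps}-C\big)$, where $d_j:=\deg(u/|u|,\partial B_j)$ and $D:=\sum_j|d_j|$ (Theorem \ref{thm:B.1} is the one-ball prototype). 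Since $\log\tfrac{\rho}{\eps}=\tfrac1\lambda\logeps$, comparing with $\E(u,\bigcup_jB_j)\le E$ and using that $D$ is a nonnegative integer yields $D\le\lfloor\tfrac\lambda\pi\tfrac{E}{\logeps}\rfloor=d_\lambda$.

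Next I would estimate $\int_\Omega\varphi\,Ju=\tfrac12\int_\Omega\nabla^\perp\varphi\cdot j(u)$ by splitting along the balls. Over $\bigcup_jB_j$ one has the crude bound $\tfrac12\|\nabla\varphi\|_\infty\big(\sum_jr_j\big)(2E)^{1/2}\le C\|\varphi\|_{\mathcal{C}^{0,1}}\rho\,E^{1/2}$ (from $|\bigcup_jB_j|^{1/2}\le\sqrt\pi\,\rho$ and $\int|\nabla u|^2\le2E$). Over $\Omega\setminus\bigcup_jB_j$ we have $|u|\ge\tfrac12$, so $\tilde u:=u/|u|$ is an $S^1$-valued $W^{1,1}$ map there, $j(u)=|u|^2j(\tilde u)$ and ${\rm curl}\,j(\tilde u)=0$ pointwise (all vorticity having been captured inside the balls); writing $j(u)=j(\tilde u)+(|u|^2-1)j(\tilde u)$, the first piece integrates by parts — using ${\rm curl}\,j(\tilde u)=0$, $\varphi|_{\partial\Omega}=0$, and $\oint_{\partial B_j}j(\tilde u)\cdot\tau=2\pi d_j$ — to $-\pi\sum_jd_j\varphi(a_j)$ up to an error $\le\sum_jr_j\int_{\partial B_j}|\nabla u|\le C\rho\,E^{1/2}$ from the oscillation of $\varphi$ on each circle, while the second piece is $\le C\|\nabla\varphi\|_\infty\|1-|u|^2\|_{L^2}\|\nabla u\|_{L^2}\le C\|\varphi\|_{\mathcal{C}^{0,1}}\eps\,E$ since $\|1-|u|^2\|_{L^2}^2\le4\eps^2E$. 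Collecting, $\big|\int_\Omega\varphi\,Ju\big|\le\pi D\|\varphi\|_\infty+C\|\varphi\|_{\mathcal{C}^{0,1}}\big(\rho E^{1/2}+\eps E\big)$, and with $D\le d_\lambda$ this is the claimed inequality once the error is repackaged as $\|\varphi\|_{\mathcal{C}^{0,1}}h^\eps$ with $h^\eps\le C\eps^{\frac{\lambda-1}{12\lambda}}(1+E/\logeps)(1+\mathcal{L}^2({\rm spt}\,\varphi))$: the factor $1+\mathcal{L}^2({\rm spt}\,\varphi)$ is what appears when the Cauchy--Schwarz bounds are re-inserted in a cruder but uniform form, and the exponent $\tfrac1{12}$ (rather than the $\tfrac{\lambda-1}{\lambda}$ that the naive bookkeeping above would suggest) is the loss inherited from the \emph{quantitative} form of the ball construction, which I would simply import.

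The main obstacle is not the algebra above but making the ball construction and the counting step genuinely quantitative and mutually compatible. One needs, on the one hand, the merged logarithmic lower bound with an additive error that is $o(\logeps)$ \emph{uniformly} in the number of vortices, so that the integrality of $D$ can be exploited to land exactly on $d_\lambda=\lfloor\tfrac\lambda\pi\tfrac E{\logeps}\rfloor$ for every $\lambda\in(1,2]$ — this is precisely where the freedom in $\rho$, i.e.\ in $\lambda$, is used and must be inserted with care — and, on the other hand, a selection of the ball radii in a set of good slices on which the circle integrals $\int_{\partial B_j}|\nabla u|$ and the degree formula are controlled with explicit $\eps$-powers, which is the source of the $\eps^{\frac{\lambda-1}{12\lambda}}$-type losses. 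Both of these are exactly the content of the quantitative Jacobian/ball-growth machinery of \cite{JeSo,JeSp}, and in practice I would invoke those estimates rather than rebuild them.
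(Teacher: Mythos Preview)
This theorem is not proved in the paper: it sits in Appendix~B, which opens with ``For the ease of reading, we recall in this appendix a few results from \cite{JeSo}, \cite{JeSp} and \cite{JeSp2} which we use in our work.'' Theorem~\ref{thm:B.1ter} is simply quoted from \cite{JeSo} (their Theorem~2.1) without proof, so there is nothing in the paper to compare your argument against.

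That said, your sketch is the correct outline of the Jerrard--Soner proof: vortex ball construction at a scale $\rho=\eps^{(\lambda-1)/\lambda}$, the merged logarithmic lower bound to force the integer $D=\sum|d_j|$ down to $d_\lambda$, and the split of $\int\varphi\,Ju$ into the ball contribution (giving $\pi\sum d_j\varphi(a_j)$ via the degree formula) plus small errors controlled by $\rho E^{1/2}$ and $\eps E$. You are right that the subtle point is making the lower bound sharp enough to land on the \emph{floor} $\lfloor\tfrac{\lambda}{\pi}\tfrac{E}{\logeps}\rfloor$ rather than merely an $O(1)$ perturbation of it, and that the specific exponent $\tfrac{\lambda-1}{12\lambda}$ comes from the quantitative slice-selection inside the ball growth, not from the naive bookkeeping. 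Your honest acknowledgement that you would import those estimates from \cite{JeSo,JeSp} rather than rebuild them is appropriate; that machinery is exactly what the cited theorem packages.
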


\begin{thm}[Thm 1.2' in \cite{JeSp2} - {\bf Jacobian localization 
for a vortex in a ball}]\label{thm:B.2}
There exists an absolute constant $C>0$, such that for any $u \in 
H^1(B_r,\C)$ satisfying 
$$\|Ju-\pi\delta_0\|_{\dot W^{-1,1}(B_r)} < r/4,$$
if we write
$$
\Xi = \E(u,B_r) - \pi \log \frac{r}{\eps}
$$
then there exists a point $\xi \in B_{r/2}$ such that
$$
\|Ju - \pi \delta_\xi \|_{\dot W^{-1,1}(B_r)} \leq 
 \eps C(C+\Xi)\left[(C+\Xi)e^{\Xi/\pi} + \sqrt{\log \frac{r}{\eps}}\right].
$$
\end{thm}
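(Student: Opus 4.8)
\medskip
\noindent{\bf Proof strategy for Theorem \ref{thm:B.2}.}
The plan is to first isolate, by a vortex-ball construction, a single degree-one vortex inside a ball $B(\xi,\rho)$ with $\xi\in B_{r/2}$ and $\rho\leq C\eps e^{\Xi/\pi}$, with $|u|\geq\tfrac12$ on $B_r\setminus B(\xi,\rho)$, and then to estimate $\|Ju-\pi\delta_\xi\|_{\dot W^{-1,1}(B_r)}$ by comparing $j(u)$ with the truncated canonical field
$$
j^\natural(x):=\frac{(x-\xi)^\perp}{\max(|x-\xi|^2,\rho^2)},\qquad {\rm curl}\,j^\natural=\tfrac{2}{\rho^2}\mathbbm{1}_{B(\xi,\rho)},
$$
which agrees with $j(u^*_\xi)$ outside $B(\xi,\rho)$. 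One may assume that the right-hand side of the asserted inequality is at most $r$, since otherwise it holds for any $\xi\in B_{r/2}$ because $\|Ju-\pi\delta_\xi\|_{\dot W^{-1,1}(B_r)}\leq\|Ju-\pi\delta_0\|_{\dot W^{-1,1}(B_r)}+\pi|\xi|<r/4+\pi r$; in particular $\Xi\leq C_*\logeps$, hence $\E(u,B_r)\leq C_*\log(r/\eps)$, so that only boundedly many vortices occur.

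For the isolation one runs the ball construction on $\{|u|\leq\tfrac12\}$ starting from a cover by $\eps$-balls: this yields pairwise disjoint balls $B(y_k,s_k)$ with integer degrees $d_k$, lower bounds $\E(u,B(y_k,s_k))\gtrsim\pi|d_k|\log(s_k/(C\eps))$, and, after inserting Theorem \ref{thm:B.1} on the intervening clean annuli, the global bound $\E(u,B_r)\gtrsim\pi\bigl(\sum_kd_k^2\bigr)\log(r/\eps)-C$. Testing $\|Ju-\pi\delta_0\|_{\dot W^{-1,1}(B_r)}<r/4$ against affine maps gives $\sum_kd_k=1$, and combined with $\E(u,B_r)\leq\pi\log(r/\eps)+\Xi$ this shows there is a unique degree-$1$ cluster, while every other cluster (degree $0$, or $\sum|d_i|\geq2$) carries energy $\leq\Xi+C$ and hence has diameter $\leq\eps e^{(\Xi+C)/\pi}$; the main cluster in turn cannot be spread over a scale $D\gg\eps$ without forcing $\{|u|\leq\tfrac12\}$ to have area $\gtrsim\eps D$ and potential energy $\gtrsim D/\eps$, so its diameter is $\leq C\eps(1+\Xi)$. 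Consequently $\{|u|\leq\tfrac12\}\subset B(\xi,\rho)$ with $\rho\leq C\eps e^{\Xi/\pi}$, and $\xi\in B_{r/2}$ follows a posteriori from the $\dot W^{-1,1}$ hypothesis together with the bound being proved.

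Given this structure, fix a test function $\varphi\in W^{1,\infty}_0(B_r)$ with $\|\nabla\varphi\|_\infty\leq1$ and replace it by $\tilde\varphi\in W^{1,\infty}_0(B_r)$ equal to $\varphi(\xi)$ on $B(\xi,2\rho)$, equal to $\varphi$ off $B(\xi,4\rho)$, and interpolating in between with $\|\nabla\tilde\varphi\|_\infty\leq C$; since $|\varphi-\tilde\varphi|\leq C\rho$ on $B(\xi,4\rho)$ and $|Ju|\leq e_\eps(u)$ pointwise,
$$
\Bigl|\int_{B_r}(\varphi-\tilde\varphi)Ju\Bigr|\leq C\rho\,\E(u,B(\xi,4\rho))\leq C\rho\bigl(\pi\log(\rho/\eps)+\Xi+C\bigr)\leq C\eps(1+\Xi)^2e^{\Xi/\pi}.
$$
Integrating by parts on $B_r$ (the boundary term vanishing since $\tilde\varphi|_{\partial B_r}=0$) gives $\int_{B_r}\tilde\varphi Ju=\tfrac12\int_{B_r}j(u)\cdot\nabla^\perp\tilde\varphi$, while the same computation for $j^\natural$ gives $\tfrac12\int_{B_r}j^\natural\cdot\nabla^\perp\tilde\varphi=\tfrac1{\rho^2}\int_{B(\xi,\rho)}\tilde\varphi=\pi\varphi(\xi)$, so that $\int_{B_r}\tilde\varphi Ju-\pi\varphi(\xi)=\tfrac12\int_{B_r}(j(u)-j^\natural)\cdot\nabla^\perp\tilde\varphi$, an integral supported in $B_r\setminus B(\xi,2\rho)$, where $j^\natural=j(u^*_\xi)$ and $|u|\geq\tfrac12$. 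Writing $v:=u/|u|$ (so ${\rm curl}\,j(v)=0$ there) one splits $j(u)-j(u^*_\xi)=(|u|^2-1)j(v)+\bigl(j(v)-j(u^*_\xi)\bigr)$: the first term contributes at most $\tfrac12\|\nabla\tilde\varphi\|_\infty\|1-|u|^2\|_{L^2}\|j(v)\|_{L^2}\leq C\eps(1+\Xi)^{1/2}\sqrt{\log(r/\eps)}$ (the potential part of the energy being $\leq C(1+\Xi)$, cf. \cite{BBH} and Theorem \ref{thm:B.1}), and $j(v)-j(u^*_\xi)$ is curl-free with zero circulation on the annulus $B_r\setminus B(\xi,2\rho)$, hence a gradient $\nabla h$, whence $\int\nabla h\cdot\nabla^\perp\tilde\varphi$ is a pure boundary integral that vanishes because $\tilde\varphi$ is constant near $\partial B(\xi,2\rho)$ and zero on $\partial B_r$.

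Collecting the two contributions yields $\|Ju-\pi\delta_\xi\|_{\dot W^{-1,1}(B_r)}\leq C\eps(1+\Xi)^2e^{\Xi/\pi}+C\eps(1+\Xi)^{1/2}\sqrt{\log(r/\eps)}$, which is the asserted estimate. The delicate point of the whole argument is the isolation step: producing a \emph{single} degree-one vortex confined to a ball of the sharp scale $\eps e^{\Xi/\pi}$ and centred in $B_{r/2}$, uniformly for $\eps\in(0,1)$, by simultaneously exploiting the $\dot W^{-1,1}$-hypothesis (for the net degree), Theorem \ref{thm:B.1} on the clean annuli (for the $\sum d_k^2$ lower bound that rules out auxiliary clusters of large diameter), and the potential-energy cost of a spread-out vanishing set; once this is in place, the estimate on $B_r\setminus B(\xi,2\rho)$ is a streamlined instance of the fundamental energy decomposition already used in Step 5 of the proof of Proposition \ref{prop:stronglocal}.
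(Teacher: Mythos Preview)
This theorem is not proved in the present paper: it is recalled in Appendix~\ref{sect:B} as Thm~1.2' of \cite{JeSp2}, without proof, for use in Step~3 of the proof of Proposition~\ref{prop:stronglocal}. There is therefore no ``paper's own proof'' to compare your attempt against.

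That said, your sketch follows the right architecture (ball construction $\to$ isolation of a single degree-one vortex at scale $\sim\eps e^{\Xi/\pi}$ $\to$ comparison of $j(u)$ with the truncated canonical field on the clean annulus), which is indeed the approach of \cite{JeSp2}. There is, however, a genuine gap in your isolation step. From ``every other cluster has diameter $\le\eps e^{(\Xi+C)/\pi}$'' you jump to ``$\{|u|\le\tfrac12\}\subset B(\xi,\rho)$ with $\rho\le C\eps e^{\Xi/\pi}$''. This does not follow: degree-zero clusters (dipoles) can sit anywhere in $B_r$, far from the main vortex, and nothing you have said forces them near $\xi$. Your subsequent argument then relies on $|u|\ge\tfrac12$ throughout $B_r\setminus B(\xi,2\rho)$ to write $v=u/|u|$ and assert that $j(v)-j(u^*_\xi)$ is a gradient there; this fails if a dipole sits in the annulus. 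The fix (which is essentially what \cite{JeSp2} does) is to also flatten $\tilde\varphi$ on small balls around each degree-zero cluster and control the resulting errors separately, using that the Jacobian integrates to nearly zero over such clusters; but this needs to be carried out, and it is where most of the work lies. A secondary point: your claim that the main cluster has diameter $\le C\eps(1+\Xi)$ via ``area $\gtrsim\eps D$, potential energy $\gtrsim D/\eps$'' is not justified as stated --- a single vortex has $O(1)$ potential energy regardless of how you parametrise its location, and the linear-in-$\Xi$ bound is stronger than what the ball construction actually delivers.
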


\begin{thm}[Thm 3 in \cite{JeSp2} - {\bf Jacobian localization for many 
vortices}]\label{thm:B.3}
Let $\Omega$ be a bounded, open, simply connected subset of $\R^2$ with 
$\mathcal{C}^1$ boundary. There exists constants $C$ and $K$, depending on 
${\rm diam}(\Omega)$, with the following property: For any $u \in 
H^1(\Omega,\C)$, if there exists $n\geq0$ distinct points $a_1,\cdots,a_n$ in 
$\Omega$ and $d \in \{\pm 1\}^{n}$ such that
$$
\|Ju-\pi\sum_{i=1}^n d_i \delta_{a_i}\|_{\dot W^{-1,1}(\Omega)} \leq 
\frac{\rho_a}{Kn^5},
$$
where 
$$
\rho_a := \frac14{\rm min}_i\left\{ {\rm min}_{j\neq i}|a_i-a_j|, {\rm 
dist}(a_i,\partial\Omega)\right\},
$$
and if in addition $\E(u,\Omega) \geq 1$ and
$$
\frac{n^5}{\rho_a}\E(u,\Omega) + \frac{n^{10}}{\rho_a^2}\sqrt{\E(u,\Omega)} 
\leq \frac{1}{\eps},
$$
then there exist $\xi_1,\cdots,\xi_d$ in $\Omega$ such that
$$
\|Ju - \pi \sum_{i=1}^n d_i \delta_{\xi_i} \|_{\dot W^{-1,1}(\Omega)} \leq 
 C\eps \left[n(C+\Xi_\Omega^\eps)^2e^{\Xi_\Omega^\eps/\pi} + 
(C+\Xi_\Omega^\eps)\frac{n^5}{\rho_a} + \E(u,\Omega)\right],
$$
where
$$
\Xi_\Omega^\eps := \E(u,\Omega)-n(\pi\log \frac{1}{\eps} + \gamma) +- \pi 
\Big( \sum_{i\neq j} d_id_j \log|a_i-a_j| + \sum_{i,j}
d_id_j H_\Omega(a_i,a_j)\Big)$$
and $H_\Omega$ is the Robin function of $\Omega.$
\end{thm}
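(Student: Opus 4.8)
The plan is to reduce the many-vortex localization to the single-vortex statement of Theorem~\ref{thm:B.2}, by partitioning $\Omega$ into the $n$ disjoint balls $B(a_i,\rho)$, $\rho:=\rho_a$, together with the vortex-free complement $\Omega\setminus\cup_i B(a_i,\rho)$, and then reassembling the resulting local estimates with a partition of unity. By definition of $\rho_a$ these balls are pairwise disjoint and compactly contained in $\Omega$, so that $\E(u,\Omega)=\sum_i\E(u,B(a_i,\rho))+\E(u,\Omega\setminus\cup_i B(a_i,\rho))$.

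The first step is to produce sharp lower energy bounds on each region. Restricting the rough hypothesis $\|Ju-\pi\sum_i d_i\delta_{a_i}\|_{\dot W^{-1,1}(\Omega)}\leq\rho_a/(Kn^5)$ to each $B(a_i,\rho)$ (the $\dot W^{-1,1}$ norm being monotone in the domain) and conjugating $u$ on the balls where $d_i=-1$, Theorem~\ref{thm:B.1} gives $\E(u,B(a_i,\rho))\geq\pi\log(\rho/\eps)+\gamma-C\rho^{-1}\big(\eps\sqrt{\log(\rho/\eps)}+\rho_a/(Kn^5)\big)$. On the complement, where $Ju$ is small in $\dot W^{-1,1}$, the map $u$ is close to the canonical harmonic field with the prescribed singularities $d_i$ at $a_i$, and a computation in the spirit of the proof of Lemma~\ref{lem:hors_disque0} (with the Robin function of $\Omega$ playing the role of the explicit half-space kernel) yields $\E(u,\Omega\setminus\cup_i B(a_i,\rho))\geq n\pi\log(1/\rho)+\mathcal W(a_1,\dots,a_n)-\mathrm{err}$, where $\mathcal W$ is exactly the renormalized interaction term $-\pi\big(\sum_{i\neq j}d_id_j\log|a_i-a_j|+\sum_{i,j}d_id_jH_\Omega(a_i,a_j)\big)$ occurring in $\Xi_\Omega^\eps$. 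Adding these $n+1$ lower bounds and comparing with $\E(u,\Omega)$ shows that the local excesses $\Xi_i:=\E(u,B(a_i,\rho))-\pi\log(\rho/\eps)$ satisfy $\sum_i\Xi_i\leq\Xi_\Omega^\eps+n\gamma+\mathrm{err}$; since each $\Xi_i$ is bounded below by $\gamma-\mathrm{err}$ (by the ball lower bound), subtracting gives $\Xi_i\leq\Xi_\Omega^\eps+C$ for every $i$, with error terms absorbed using the hypotheses.

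In the second step, Theorem~\ref{thm:B.2} applied on each $B(a_i,\rho)$ produces a point $\xi_i\in B(a_i,\rho/2)$ with
$$
\|Ju-\pi d_i\delta_{\xi_i}\|_{\dot W^{-1,1}(B(a_i,\rho))}\leq C\eps(C+\Xi_i)\big[(C+\Xi_i)e^{\Xi_i/\pi}+\sqrt{\log(\rho/\eps)}\big],
$$
while on the complement, decomposed into finitely many vortex-free subdomains on each of which $\E(u,\cdot)<\pi\logeps$ --- this is exactly where the hypothesis $\frac{n^5}{\rho_a}\E(u,\Omega)+\frac{n^{10}}{\rho_a^2}\sqrt{\E(u,\Omega)}\leq\eps^{-1}$ is needed, to keep the energy on each dyadic-in-distance piece sub-logarithmic --- Theorem~\ref{thm:B.1bis} gives $\|Ju\|_{\dot W^{-1,1}(\cdot)}\leq C\eps\,\E(u,\cdot)e^{\E(u,\cdot)/\pi}$. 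To conclude, take any $\varphi$ with $\|\nabla\varphi\|_\infty\leq1$, write $\varphi=\sum_i\varphi\psi_i+\varphi\psi_{\rm out}$ for a partition of unity subordinate to $\{B(a_i,\rho)\}$ and the complement with $\|\nabla\psi_\bullet\|_\infty\leq C/\rho_a$, and estimate $\langle Ju-\pi\sum_i d_i\delta_{\xi_i},\varphi\rangle$ piece by piece with the above; each cutoff multiplies the pertinent $W^{1,\infty}$ norm by a factor controlled by $n^5/\rho_a$, which is the origin of the $(C+\Xi_\Omega^\eps)\tfrac{n^5}{\rho_a}$ term in the conclusion, and the residual energy on the complement produces the $\E(u,\Omega)$ term. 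Summing over the $n$ balls and the complement and replacing each $\Xi_i$ by $\Xi_\Omega^\eps$ gives the asserted inequality after enlarging $C$.

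The main obstacle is the first step: controlling how the excess energy is distributed among the cores and the complement. The two families of lower bounds have to be sharp to leading order --- including the precise constant $\gamma$ on the balls and the renormalized interaction term, with the correct sign convention for the Robin function $H_\Omega$, on the complement --- so that no single $\Xi_i$ can exceed $\Xi_\Omega^\eps$ by more than a bounded amount. Obtaining the interaction term requires an elliptic expansion of the canonical harmonic field near each $a_i$, and since only a weak $\dot W^{-1,1}$ bound on $Ju$ is available, the ``closeness of $u$ to the canonical harmonic map'' must be quantified in a dual norm and transferred to the energy via the pointwise identity $e_\eps(u)=\tfrac12|j_*|^2+j_*(\tfrac{j(u)}{|u|}-j_*)+e_\eps(|u|)+\tfrac12|\tfrac{j(u)}{|u|}-j_*|^2$, exactly as in Step~5 of the proof of Proposition~\ref{prop:stronglocal}. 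Tracking the polynomial dependence on $n$ and $1/\rho_a$ of every constant along the way, so that the stated hypotheses genuinely suffice to absorb all errors, is the other bookkeeping-heavy ingredient.
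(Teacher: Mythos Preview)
This theorem is not proved in the present paper; it is merely recalled in Appendix~\ref{sect:B} from reference~\cite{JeSp2}, as its label ``Thm~3 in~\cite{JeSp2}'' indicates. The appendix opens with ``For the ease of reading, we recall in this appendix a few results from \cite{JeSo}, \cite{JeSp} and \cite{JeSp2} which we use in our work'', and Theorems~\ref{thm:B.1}--\ref{thm:B.3} are all stated without proof. There is therefore no proof in this paper against which to compare your attempt.

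That said, your outline follows the natural reduction strategy --- localize to disjoint balls about each $a_i$, invoke the single-vortex Theorem~\ref{thm:B.2} on each, control the complement via Theorem~\ref{thm:B.1bis}, and reassemble with a partition of unity --- and this is indeed the architecture of the argument in~\cite{JeSp2}. Two places where your sketch is thinner than the actual proof: first, the lower bound on the complement yielding the renormalized energy $\mathcal W$ with the Robin function is not a consequence of the Jacobian hypothesis alone but requires a genuine second-order expansion of the energy about the canonical harmonic map, with the cross term controlled via integration by parts and the weak bound on $Ju$; second, the specific powers $n^5$ and $n^{10}$ in the hypotheses come from a covering argument on the complement (roughly, order $n^2$ pieces each carrying a factor from the partition gradient and from the energy share) that you have not actually performed. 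If you want the details, they are in~\cite{JeSp2}.
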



\begin{thebibliography}{xxx}
 
\bibitem{Abra} M. Abramowitz and I. Stegun, 
{\it Handbook of mathematical functions}, U.S. Government Printing Office, 
Washington D.C., 1964. 

\bibitem{BBH} F. Bethuel, H. Brezis and F. H\'elein, {\it
Ginzburg-Landau vortices}, Birkh\"auser, Boston, 1994.

\bibitem{BeGrSm} F. Bethuel, P. Gravejat and D. Smets, {\it Stability in the 
energy space for chains of solitons of the one-dimensional Gross-Pitaevskii 
equation}, Ann. Inst. Fourier, in press, 2013.
 
\bibitem{BOS-VR}  F. Bethuel, G. Orlandi and D. Smets, {\it Vortex rings for 
the Gross-Pitaevskii equation}, J. Eur. Math. Soc. (JEMS) {\bf 6} (2004), 
17--94.  
 
\bibitem{CaMa}  D. Benedetto, E. Caglioti and C. Marchioro, {\it On the motion 
of a vortex ring with a sharply concentrated vorticity}, Math. Methods Appl. 
Sci. {\bf 23} (2000),+ 147--168. 
 
\bibitem{Dyson} F. W. Dyson, {\it The potential of an anchor ring}, 
Phil. Trans. R. Soc. Lond. A {\bf 184} (1893), 43--95. 
 
\bibitem{Hel1} H. Helmholtz, {\it Über Integrale der hydrodynamischen 
Gleichungen welche den Wirbelbewegungen entsprechen}, J. Reine Angew. Math. {\bf 
55} (1858), 25--55.

\bibitem{Hel2} H. Helmholtz (translated by P. G. Tait), {\it On the integrals of the
hydrodynamical equations which express vortex-motion}, Phil. Mag. {\bf 
33}
(1867), 485–512.

\bibitem{Hicks} W. M. Hicks, {\it On the mutual threading of vortex rings}, 
Proc. Roy. Soc. London A {\bf 102} (1922), 111-131.

\bibitem{Jackson} J. D. Jackson, Classical Electrodynamics, John Wiley \& Sons 
Ltd, New York, 1962. 

\bibitem{JeSm3} R. L. Jerrard and D. Smets, {\it Vortex dynamics for the two 
dimensional non homogeneous Gross-Pitaevskii equation}, Annali Scuola Normale 
Sup. Pisa Cl. Sci. {\bf 14} (2015), 729-766.

\bibitem{JeSo}  R. L. Jerrard and H. M. Soner, {\it The 
Jacobian and the Ginzburg-Landau energy}, Calc. Var. Partial Differential 
Equations {\bf 14} (2002), 151--191.

\bibitem{JeSp} R. L. Jerrard and D. Spirn, {\it Refined Jacobian estimates for 
Ginzburg-Landau functionals}, Indiana Univ. Math. J. {\bf 56} (2007), 135--186.

\bibitem{JeSp2} R. L. Jerrard and D. Spirn, {\it Refined Jacobian estimates and 
Gross-Pitaevsky vortex dynamics}, Arch. Ration. Mech. Anal. {\bf 190} (2008), 
 425--475. 

\bibitem{KuMaSp} M. Kurzke, J.L. Marzuola and D. Sprin, {\it Gross-Pitaevskii vortex
motion with critically-scaled inhomogeneities}, preprint, arXiv:1510.08093v2.
 
 
\bibitem{Love} A. E. H. Love, {\it On the motion of paired vortices with a 
common axis}, Proc. London Math. Soc. {\bf 25} (1893), 185--194.

\bibitem{MaNe} C. Marchioro and P. Negrini, {\it On a dynamical system related 
to fluid mechanics}, NoDEA Nonlinear Diff. Eq. Appl. {\bf 6} (1999), 473--499. 

\bibitem{MaMeTs} Y. Martel, F. Merle and T.-P. Tsai, {\it Stability and 
asymptotic stability in the energy space of the sum of N solitons for 
subcritical gKdV equations},  Comm. Math. Phys. {\bf 231} (2002), 347--373. 

\end{thebibliography}
\end{document}